\documentclass[11pt,reqno]{amsart}
\usepackage{amssymb}
\usepackage{amsmath}
\usepackage{amsfonts}

\setcounter{MaxMatrixCols}{10}

\newtheorem{theorem}{Theorem}
\theoremstyle{plain}

\newtheorem{axiom}{Axiom}

\newtheorem{conjecture}{Conjecture}
\newtheorem{corollary}{Corollary}

\newtheorem{definition}{Definition}
\newtheorem{example}{Example}
\newtheorem{exercise}{Exercise}
\newtheorem{lemma}{Lemma}

\newtheorem{proposition}{Proposition}
\newtheorem{remark}{Remark}

\numberwithin{equation}{section}
\numberwithin{equation}{section}
 \oddsidemargin.25cm
\evensidemargin .25cm
\topmargin0cm\textheight21.5cm\textwidth15.5cm

\typeout{TCILATEX Macros for Scientific Word and Scientific WorkPlace 5.5 <06 Oct 2005>.}
\typeout{NOTICE:  This macro file is NOT proprietary and may be 
freely copied and distributed.}
\makeatletter

\ifx\pdfoutput\relax\let\pdfoutput=\undefined\fi
\newcount\msipdfoutput
\ifx\pdfoutput\undefined
\else
 \ifcase\pdfoutput
 \else 
    \msipdfoutput=1
    \ifx\paperwidth\undefined
    \else
      \ifdim\paperheight=0pt\relax
      \else
        \pdfpageheight\paperheight
      \fi
      \ifdim\paperwidth=0pt\relax
      \else
        \pdfpagewidth\paperwidth
      \fi
    \fi
  \fi  
\fi

%

%
\newcount\@hour\newcount\@minute\chardef\@x10\chardef\@xv60
\def\tcitime{
\def\@time{%
  \@minute\time\@hour\@minute\divide\@hour\@xv
  \ifnum\@hour<\@x 0\fi\the\@hour:%
  \multiply\@hour\@xv\advance\@minute-\@hour
  \ifnum\@minute<\@x 0\fi\the\@minute
  }}%


\def\x@hyperref#1#2#3{%
   \catcode`\~ = 12
   \catcode`\$ = 12
   \catcode`\_ = 12
   \catcode`\# = 12
   \catcode`\& = 12
   \catcode`\% = 12
   \y@hyperref{#1}{#2}{#3}%
}

\def\y@hyperref#1#2#3#4{%
   #2\ref{#4}#3
   \catcode`\~ = 13
   \catcode`\$ = 3
   \catcode`\_ = 8
   \catcode`\# = 6
   \catcode`\& = 4
   \catcode`\% = 14
}

\@ifundefined{hyperref}{\let\hyperref\x@hyperref}{}
\@ifundefined{msihyperref}{\let\msihyperref\x@hyperref}{}

\@ifundefined{qExtProgCall}{\def\qExtProgCall#1#2#3#4#5#6{\relax}}{}
%
%
%
%
\def\QCTOpt[#1]#2{%
  \def\QCTOptB{#1}
  \def\QCTOptA{#2}
}
\def\QCTNOpt#1{%
  \def\QCTOptA{#1}
  \let\QCTOptB\empty
}
\def\Qct{%
  \@ifnextchar[{%
    \QCTOpt}{\QCTNOpt}
}
\def\QCBOpt[#1]#2{%
  \def\QCBOptB{#1}%
  \def\QCBOptA{#2}%
}
\def\QCBNOpt#1{%
  \def\QCBOptA{#1}%
  \let\QCBOptB\empty
}
\def\Qcb{%
  \@ifnextchar[{%
    \QCBOpt}{\QCBNOpt}%
}
\def\PrepCapArgs{%
  \ifx\QCBOptA\empty
    \ifx\QCTOptA\empty
      {}%
    \else
      \ifx\QCTOptB\empty
        {\QCTOptA}%
      \else
        [\QCTOptB]{\QCTOptA}%
      \fi
    \fi
  \else
    \ifx\QCBOptA\empty
      {}%
    \else
      \ifx\QCBOptB\empty
        {\QCBOptA}%
      \else
        [\QCBOptB]{\QCBOptA}%
      \fi
    \fi
  \fi
}
\newcount\GRAPHICSTYPE
\GRAPHICSTYPE=\z@
\def\GRAPHICSPS#1{%
 \ifcase\GRAPHICSTYPE
   \special{ps: #1}%
 \or
   \special{language "PS", include "#1"}%
 \fi
}%
%
%
%

\def\graffile#1#2#3#4{%
    \bgroup
	   \@inlabelfalse
       \leavevmode
       \@ifundefined{bbl@deactivate}{\def~{\string~}}{\activesoff}%
        \raise -#4 \BOXTHEFRAME{%
           \hbox to #2{\raise #3\hbox to #2{\null #1\hfil}}}%
    \egroup
}%
%
\def\draftbox#1#2#3#4{%
 \leavevmode\raise -#4 \hbox{%
  \frame{\rlap{\protect\tiny #1}\hbox to #2%
   {\vrule height#3 width\z@ depth\z@\hfil}%
  }%
 }%
}%
\newcount\@msidraft
\@msidraft=\z@
\let\nographics=\@msidraft
\newif\ifwasdraft
\wasdraftfalse

\def\GRAPHIC#1#2#3#4#5{%
   \ifnum\@msidraft=\@ne\draftbox{#2}{#3}{#4}{#5}%
   \else\graffile{#1}{#3}{#4}{#5}%
   \fi
}
\def\addtoLaTeXparams#1{%
    \edef\LaTeXparams{\LaTeXparams #1}}%
%

\newif\ifBoxFrame \BoxFramefalse
\newif\ifOverFrame \OverFramefalse
\newif\ifUnderFrame \UnderFramefalse

\def\BOXTHEFRAME#1{%
   \hbox{%
      \ifBoxFrame
         \frame{#1}%
      \else
         {#1}%
      \fi
   }%
}

\def\doFRAMEparams#1{\BoxFramefalse\OverFramefalse\UnderFramefalse\readFRAMEparams#1\end}%
\def\readFRAMEparams#1{%
 \ifx#1\end%
  \let\next=\relax
  \else
  \ifx#1i\dispkind=\z@\fi
  \ifx#1d\dispkind=\@ne\fi
  \ifx#1f\dispkind=\tw@\fi
  \ifx#1t\addtoLaTeXparams{t}\fi
  \ifx#1b\addtoLaTeXparams{b}\fi
  \ifx#1p\addtoLaTeXparams{p}\fi
  \ifx#1h\addtoLaTeXparams{h}\fi
  \ifx#1X\BoxFrametrue\fi
  \ifx#1O\OverFrametrue\fi
  \ifx#1U\UnderFrametrue\fi
  \ifx#1w
    \ifnum\@msidraft=1\wasdrafttrue\else\wasdraftfalse\fi
    \@msidraft=\@ne
  \fi
  \let\next=\readFRAMEparams
  \fi
 \next
 }%
%

\def\IFRAME#1#2#3#4#5#6{%
      \bgroup
      \let\QCTOptA\empty
      \let\QCTOptB\empty
      \let\QCBOptA\empty
      \let\QCBOptB\empty
      #6%
      \parindent=0pt
      \leftskip=0pt
      \rightskip=0pt
      \setbox0=\hbox{\QCBOptA}%
      \@tempdima=#1\relax
      \ifOverFrame
          \typeout{This is not implemented yet}%
          \show\HELP
      \else
         \ifdim\wd0>\@tempdima
            \advance\@tempdima by \@tempdima
            \ifdim\wd0 >\@tempdima
               \setbox1 =\vbox{%
                  \unskip\hbox to \@tempdima{\hfill\GRAPHIC{#5}{#4}{#1}{#2}{#3}\hfill}%
                  \unskip\hbox to \@tempdima{\parbox[b]{\@tempdima}{\QCBOptA}}%
               }%
               \wd1=\@tempdima
            \else
               \textwidth=\wd0
               \setbox1 =\vbox{%
                 \noindent\hbox to \wd0{\hfill\GRAPHIC{#5}{#4}{#1}{#2}{#3}\hfill}\\%
                 \noindent\hbox{\QCBOptA}%
               }%
               \wd1=\wd0
            \fi
         \else
            \ifdim\wd0>0pt
              \hsize=\@tempdima
              \setbox1=\vbox{%
                \unskip\GRAPHIC{#5}{#4}{#1}{#2}{0pt}%
                \break
                \unskip\hbox to \@tempdima{\hfill \QCBOptA\hfill}%
              }%
              \wd1=\@tempdima
           \else
              \hsize=\@tempdima
              \setbox1=\vbox{%
                \unskip\GRAPHIC{#5}{#4}{#1}{#2}{0pt}%
              }%
              \wd1=\@tempdima
           \fi
         \fi
         \@tempdimb=\ht1
         \advance\@tempdimb by -#2
         \advance\@tempdimb by #3
         \leavevmode
         \raise -\@tempdimb \hbox{\box1}%
      \fi
      \egroup%
}%
%
\def\DFRAME#1#2#3#4#5{%
  \vspace\topsep
  \hfil\break
  \bgroup
     \leftskip\@flushglue
	 \rightskip\@flushglue
	 \parindent\z@
	 \parfillskip\z@skip
     \let\QCTOptA\empty
     \let\QCTOptB\empty
     \let\QCBOptA\empty
     \let\QCBOptB\empty
	 \vbox\bgroup
        \ifOverFrame 
           #5\QCTOptA\par
        \fi
        \GRAPHIC{#4}{#3}{#1}{#2}{\z@}%
        \ifUnderFrame 
           \break#5\QCBOptA
        \fi
	 \egroup
  \egroup
  \vspace\topsep
  \break
}%
%
\def\FFRAME#1#2#3#4#5#6#7{%
  \@ifundefined{floatstyle}
    {
     \begin{figure}[#1]%
    }
    {
	 \ifx#1h
      \begin{figure}[H]%
	 \else
      \begin{figure}[#1]%
	 \fi
	}
  \let\QCTOptA\empty
  \let\QCTOptB\empty
  \let\QCBOptA\empty
  \let\QCBOptB\empty
  \ifOverFrame
    #4
    \ifx\QCTOptA\empty
    \else
      \ifx\QCTOptB\empty
        \caption{\QCTOptA}%
      \else
        \caption[\QCTOptB]{\QCTOptA}%
      \fi
    \fi
    \ifUnderFrame\else
      \label{#5}%
    \fi
  \else
    \UnderFrametrue%
  \fi
  \begin{center}\GRAPHIC{#7}{#6}{#2}{#3}{\z@}\end{center}%
  \ifUnderFrame
    #4
    \ifx\QCBOptA\empty
      \caption{}%
    \else
      \ifx\QCBOptB\empty
        \caption{\QCBOptA}%
      \else
        \caption[\QCBOptB]{\QCBOptA}%
      \fi
    \fi
    \label{#5}%
  \fi
  \end{figure}%
 }%
%
%
%
%
%
\newcount\dispkind%

\def\makeactives{
  \catcode`\"=\active
  \catcode`\;=\active
  \catcode`\:=\active
  \catcode`\'=\active
  \catcode`\~=\active
}
\bgroup
   \makeactives
   \gdef\activesoff{%
      \def"{\string"}%
      \def;{\string;}%
      \def:{\string:}%
      \def'{\string'}%
      \def~{\string~}%
    }
\egroup

\def\FRAME#1#2#3#4#5#6#7#8{%
 \bgroup
 \ifnum\@msidraft=\@ne
   \wasdrafttrue
 \else
   \wasdraftfalse%
 \fi
 \def\LaTeXparams{}%
 \dispkind=\z@
 \def\LaTeXparams{}%
 \doFRAMEparams{#1}%
 \ifnum\dispkind=\z@\IFRAME{#2}{#3}{#4}{#7}{#8}{#5}\else
  \ifnum\dispkind=\@ne\DFRAME{#2}{#3}{#7}{#8}{#5}\else
   \ifnum\dispkind=\tw@
    \edef\@tempa{\noexpand\FFRAME{\LaTeXparams}}%
    \@tempa{#2}{#3}{#5}{#6}{#7}{#8}%
    \fi
   \fi
  \fi
  \ifwasdraft\@msidraft=1\else\@msidraft=0\fi{}%
  \egroup
 }%
%

\def\TEXUX#1{"texux"}

%
%
%
%
%
%
%
%
%
%

%
\long\def\QQQ#1#2{%
     \long\expandafter\def\csname#1\endcsname{#2}}%
\@ifundefined{QTP}{\def\QTP#1{}}{}
\@ifundefined{QEXCLUDE}{\def\QEXCLUDE#1{}}{}
\@ifundefined{Qlb}{}{}
\@ifundefined{Qlt}{}{}
\long\def\QQA#1#2{}%
\def\QTR#1#2{{\csname#1\endcsname {#2}}}%

%
%
\def\EXPAND#1[#2]#3{}%
\def\NOEXPAND#1[#2]#3{}%
\def\LaTeXparent#1{}%
\def\ChildStyles#1{}%
\def\ChildDefaults#1{}%
\def\QTagDef#1#2#3{}%

\@ifundefined{correctchoice}{}{}
\@ifundefined{HTML}{\def\HTML#1{\relax}}{}
\@ifundefined{TCIIcon}{\def\TCIIcon#1#2#3#4{\relax}}{}
\if@compatibility
  \typeout{Not defining UNICODE  U or CustomNote commands for LaTeX 2.09.}
\else
  \providecommand{\UNICODE}[2][]{\protect\rule{.1in}{.1in}}
  \providecommand{\U}[1]{\protect\rule{.1in}{.1in}}
  
\fi

\@ifundefined{lambdabar}{
      
   }{}

%
\@ifundefined{StyleEditBeginDoc}{}{}
%
\def\QQfnmark#1{\footnotemark}

%
%
\@ifundefined{TCIMAKEINDEX}{}{\makeindex}%
%
\@ifundefined{abstract}{%
 \def\abstract{%
  \if@twocolumn
   \section*{Abstract (Not appropriate in this style!)}%
   \else \small 
   \begin{center}{\bf Abstract\vspace{-.5em}\vspace{\z@}}\end{center}%
   \quotation 
   \fi
  }%
 }{%
 }%
\@ifundefined{endabstract}{\def\endabstract
  {\if@twocolumn\else\endquotation\fi}}{}%
\@ifundefined{maketitle}{\def\maketitle#1{}}{}%
\@ifundefined{affiliation}{\def\affiliation#1{}}{}%
\@ifundefined{proof}{}{}%
\@ifundefined{endproof}{}{}%
\@ifundefined{newfield}{\def\newfield#1#2{}}{}%
\@ifundefined{chapter}{\def\chapter#1{\par(Chapter head:)#1\par }%
 \newcount\c@chapter}{}%
\@ifundefined{part}{\def\part#1{\par(Part head:)#1\par }}{}%
\@ifundefined{section}{\def\section#1{\par(Section head:)#1\par }}{}%
\@ifundefined{subsection}{\def\subsection#1%
 {\par(Subsection head:)#1\par }}{}%
\@ifundefined{subsubsection}{\def\subsubsection#1%
 {\par(Subsubsection head:)#1\par }}{}%
\@ifundefined{paragraph}{\def\paragraph#1%
 {\par(Subsubsubsection head:)#1\par }}{}%
\@ifundefined{subparagraph}{\def\subparagraph#1%
 {\par(Subsubsubsubsection head:)#1\par }}{}%
\@ifundefined{therefore}{}{}%
\@ifundefined{backepsilon}{}{}%
\@ifundefined{yen}{}{}%
\@ifundefined{registered}{%
   \def\registered{\relax\ifmmode{}\r@gistered
                    \else$\m@th\r@gistered$\fi}%
 \def\r@gistered{^{\ooalign
  {\hfil\raise.07ex\hbox{$\scriptstyle\rm\text{R}$}\hfil\crcr
  \mathhexbox20D}}}}{}%
\@ifundefined{Eth}{}{}%
\@ifundefined{eth}{}{}%
\@ifundefined{Thorn}{}{}%
\@ifundefined{thorn}{}{}%
%
\@ifundefined{degree}{}{}%
%
\newdimen\theight
\@ifundefined{Column}{\def\Column{%
 \vadjust{\setbox\z@=\hbox{\scriptsize\quad\quad tcol}%
  \theight=\ht\z@\advance\theight by \dp\z@\advance\theight by \lineskip
  \kern -\theight \vbox to \theight{%
   \rightline{\rlap{\box\z@}}%
   \vss
   }%
  }%
 }}{}%
\@ifundefined{qed}{\def\qed{%
 \ifhmode\unskip\nobreak\fi\ifmmode\ifinner\else\hskip5\p@\fi\fi
 \hbox{\hskip5\p@\vrule width4\p@ height6\p@ depth1.5\p@\hskip\p@}%
 }}{}%
\@ifundefined{cents}{}{}%
\@ifundefined{tciLaplace}{}{}%
\@ifundefined{tciFourier}{}{}%
\@ifundefined{textcurrency}{}{}%
\@ifundefined{texteuro}{}{}%
\@ifundefined{euro}{}{}%
\@ifundefined{textfranc}{}{}%
\@ifundefined{textlira}{}{}%
\@ifundefined{textpeseta}{}{}%
\@ifundefined{miss}{\def\miss{\hbox{\vrule height2\p@ width 2\p@ depth\z@}}}{}%
\@ifundefined{vvert}{}{}
\@ifundefined{tcol}{\def\tcol#1{{\baselineskip=6\p@ \vcenter{#1}} \Column}}{}%
\@ifundefined{dB}{}{}
\@ifundefined{mB}{}{}
\@ifundefined{nB}{}{}
\@ifundefined{note}{}{}%
\def\newfmtname{LaTeX2e}
%
\ifx\fmtname\newfmtname
  \DeclareOldFontCommand{\rm}{\normalfont\rmfamily}{\mathrm}
  \DeclareOldFontCommand{\sf}{\normalfont\sffamily}{\mathsf}
  \DeclareOldFontCommand{\tt}{\normalfont\ttfamily}{\mathtt}
  \DeclareOldFontCommand{\bf}{\normalfont\bfseries}{\mathbf}
  \DeclareOldFontCommand{\it}{\normalfont\itshape}{\mathit}
  \DeclareOldFontCommand{\sl}{\normalfont\slshape}{\@nomath\sl}
  \DeclareOldFontCommand{\sc}{\normalfont\scshape}{\@nomath\sc}
\fi

%

\def\alpha{{\Greekmath 010B}}%
\def\beta{{\Greekmath 010C}}%
\def\gamma{{\Greekmath 010D}}%
\def\delta{{\Greekmath 010E}}%
\def\epsilon{{\Greekmath 010F}}%
\def\zeta{{\Greekmath 0110}}%
\def\eta{{\Greekmath 0111}}%
\def\theta{{\Greekmath 0112}}%
\def\iota{{\Greekmath 0113}}%
\def\kappa{{\Greekmath 0114}}%
\def\lambda{{\Greekmath 0115}}%
\def\mu{{\Greekmath 0116}}%
\def\nu{{\Greekmath 0117}}%
\def\xi{{\Greekmath 0118}}%
\def\pi{{\Greekmath 0119}}%
\def\rho{{\Greekmath 011A}}%
\def\sigma{{\Greekmath 011B}}%
\def\tau{{\Greekmath 011C}}%
\def\upsilon{{\Greekmath 011D}}%
\def\phi{{\Greekmath 011E}}%
\def\chi{{\Greekmath 011F}}%
\def\psi{{\Greekmath 0120}}%
\def\omega{{\Greekmath 0121}}%
\def\varepsilon{{\Greekmath 0122}}%
\def\vartheta{{\Greekmath 0123}}%
\def\varpi{{\Greekmath 0124}}%
\def\varrho{{\Greekmath 0125}}%
\def\varsigma{{\Greekmath 0126}}%
\def\varphi{{\Greekmath 0127}}%

\def\nabla{{\Greekmath 0272}}
\def\FindBoldGroup{%
   {\setbox0=\hbox{$\mathbf{x\global\edef\theboldgroup{\the\mathgroup}}$}}%
}

\def\Greekmath#1#2#3#4{%
    \if@compatibility
        \ifnum\mathgroup=\symbold
           \mathchoice{\mbox{\boldmath$\displaystyle\mathchar"#1#2#3#4$}}%
                      {\mbox{\boldmath$\textstyle\mathchar"#1#2#3#4$}}%
                      {\mbox{\boldmath$\scriptstyle\mathchar"#1#2#3#4$}}%
                      {\mbox{\boldmath$\scriptscriptstyle\mathchar"#1#2#3#4$}}%
        \else
           \mathchar"#1#2#3#4%
        \fi 
    \else 
        \FindBoldGroup
        \ifnum\mathgroup=\theboldgroup 
           \mathchoice{\mbox{\boldmath$\displaystyle\mathchar"#1#2#3#4$}}%
                      {\mbox{\boldmath$\textstyle\mathchar"#1#2#3#4$}}%
                      {\mbox{\boldmath$\scriptstyle\mathchar"#1#2#3#4$}}%
                      {\mbox{\boldmath$\scriptscriptstyle\mathchar"#1#2#3#4$}}%
        \else
           \mathchar"#1#2#3#4%
        \fi     	    
	  \fi}

\newif\ifGreekBold  \GreekBoldfalse
\let\SAVEPBF=\pbf
\def\pbf{\GreekBoldtrue\SAVEPBF}%

\@ifundefined{theorem}{\newtheorem{theorem}{Theorem}}{}
\@ifundefined{lemma}{\newtheorem{lemma}[theorem]{Lemma}}{}
\@ifundefined{corollary}{}{}
\@ifundefined{conjecture}{}{}
\@ifundefined{proposition}{\newtheorem{proposition}[theorem]{Proposition}}{}
\@ifundefined{axiom}{}{}
\@ifundefined{remark}{\newtheorem{remark}{Remark}}{}
\@ifundefined{example}{\newtheorem{example}{Example}}{}
\@ifundefined{exercise}{}{}
\@ifundefined{definition}{}{}

\@ifundefined{mathletters}{%
  \newcounter{equationnumber}  
  \def\mathletters{%
     \addtocounter{equation}{1}
     \edef\@currentlabel{\theequation}%
     \setcounter{equationnumber}{\c@equation}
     \setcounter{equation}{0}%
     \edef\theequation{\@currentlabel\noexpand\alph{equation}}%
  }
  
}{}

\@ifundefined{BibTeX}{%
    \def\BibTeX{{\rm B\kern-.05em{\sc i\kern-.025em b}\kern-.08em
                 T\kern-.1667em\lower.7ex\hbox{E}\kern-.125emX}}}{}%
\@ifundefined{AmS}%
    {\def\AmS{{\protect\usefont{OMS}{cmsy}{m}{n}%
                A\kern-.1667em\lower.5ex\hbox{M}\kern-.125emS}}}{}%
\@ifundefined{AmSTeX}{}{}%
%

\def\@@eqncr{\let\@tempa\relax
    \ifcase\@eqcnt \def\@tempa{& & &}\or \def\@tempa{& &}%
      \else \def\@tempa{&}\fi
     \@tempa
     \if@eqnsw
        \iftag@
           \@taggnum
        \else
           \@eqnnum\stepcounter{equation}%
        \fi
     \fi
     \global\tag@false
     \global\@eqnswtrue
     \global\@eqcnt\z@\cr}

\def\TCItag{\@ifnextchar*{\@TCItagstar}{\@TCItag}}
\def\@TCItag#1{%
    \global\tag@true
    \global\def\@taggnum{(#1)}%
    \global\def\@currentlabel{#1}}
\def\@TCItagstar*#1{%
    \global\tag@true
    \global\def\@taggnum{#1}%
    \global\def\@currentlabel{#1}}
%
%
%
%
%
%
%
%
%
%
%
%
%
%
%
%
%
%
%

\def\tint{\msi@int\textstyle\int}%
\def\tiint{\msi@int\textstyle\iint}%
\def\tiiint{\msi@int\textstyle\iiint}%
\def\tiiiint{\msi@int\textstyle\iiiint}%
\def\tidotsint{\msi@int\textstyle\idotsint}%
\def\toint{\msi@int\textstyle\oint}%

%
%
%
%
%
%
%
%
%
%
%
%
%
%
%

\newtoks\temptoksa
\newtoks\temptoksb
\newtoks\temptoksc

\def\msi@int#1#2{%
 \def\@temp{{#1#2\the\temptoksc_{\the\temptoksa}^{\the\temptoksb}}}%
 \futurelet\@nextcs
 \@int
}

\def\@int{%
   \ifx\@nextcs\limits
      \typeout{Found limits}%
      \temptoksc={\limits}%
	  \let\@next\@intgobble%
   \else\ifx\@nextcs\nolimits
      \typeout{Found nolimits}%
      \temptoksc={\nolimits}%
	  \let\@next\@intgobble%
   \else
      \typeout{Did not find limits or no limits}%
      \temptoksc={}%
      \let\@next\msi@limits%
   \fi\fi
   \@next   
}%

\def\@intgobble#1{%
   \typeout{arg is #1}%
   \msi@limits
}

\def\msi@limits{%
   \temptoksa={}%
   \temptoksb={}%
   \@ifnextchar_{\@limitsa}{\@limitsb}%
}

\def\@limitsa_#1{%
   \temptoksa={#1}%
   \@ifnextchar^{\@limitsc}{\@temp}%
}

\def\@limitsb{%
   \@ifnextchar^{\@limitsc}{\@temp}%
}

\def\@limitsc^#1{%
   \temptoksb={#1}%
   \@ifnextchar_{\@limitsd}{\@temp}%
}

\def\@limitsd_#1{%
   \temptoksa={#1}%
   \@temp
}

\def\dint{\msi@int\displaystyle\int}%
\def\diint{\msi@int\displaystyle\iint}%
\def\diiint{\msi@int\displaystyle\iiint}%
\def\diiiint{\msi@int\displaystyle\iiiint}%
\def\didotsint{\msi@int\displaystyle\idotsint}%
\def\doint{\msi@int\displaystyle\oint}%

\if@compatibility\else
  \RequirePackage{amsmath}
\fi

\def\ExitTCILatex{\makeatother }

\bgroup
\ifx\ds@amstex\relax
   \message{amstex already loaded}\aftergroup\ExitTCILatex
\else
   \@ifpackageloaded{amsmath}%
      {\if@compatibility\message{amsmath already loaded}\fi\aftergroup\ExitTCILatex}
      {}
   \@ifpackageloaded{amstex}%
      {\if@compatibility\message{amstex already loaded}\fi\aftergroup\ExitTCILatex}
      {}
   \@ifpackageloaded{amsgen}%
      {\if@compatibility\message{amsgen already loaded}\fi\aftergroup\ExitTCILatex}
      {}
\fi
\egroup


\typeout{TCILATEX defining AMS-like constructs in LaTeX 2.09 COMPATIBILITY MODE}
%
%
\let\DOTSI\relax
\def\RIfM@{\relax\ifmmode}%
\def\FN@{\futurelet\next}%
\newcount\intno@
\def\iint{\DOTSI\intno@\tw@\FN@\ints@}%
\def\iiint{\DOTSI\intno@\thr@@\FN@\ints@}%
\def\iiiint{\DOTSI\intno@4 \FN@\ints@}%
\def\idotsint{\DOTSI\intno@\z@\FN@\ints@}%
\def\ints@{\findlimits@\ints@@}%
\newif\iflimtoken@
\newif\iflimits@
\def\findlimits@{\limtoken@true\ifx\next\limits\limits@true
 \else\ifx\next\nolimits\limits@false\else
 \limtoken@false\ifx\ilimits@\nolimits\limits@false\else
 \ifinner\limits@false\else\limits@true\fi\fi\fi\fi}%
\def\multint@{\int\ifnum\intno@=\z@\intdots@                          
 \else\intkern@\fi                                                    
 \ifnum\intno@>\tw@\int\intkern@\fi                                   
 \ifnum\intno@>\thr@@\int\intkern@\fi                                 
 \int}
\def\multintlimits@{\intop\ifnum\intno@=\z@\intdots@\else\intkern@\fi
 \ifnum\intno@>\tw@\intop\intkern@\fi
 \ifnum\intno@>\thr@@\intop\intkern@\fi\intop}%
\def\intic@{%
    \mathchoice{\hskip.5em}{\hskip.4em}{\hskip.4em}{\hskip.4em}}%
\def\negintic@{\mathchoice
 {\hskip-.5em}{\hskip-.4em}{\hskip-.4em}{\hskip-.4em}}%
\def\ints@@{\iflimtoken@                                              
 \def\ints@@@{\iflimits@\negintic@
   \mathop{\intic@\multintlimits@}\limits                             
  \else\multint@\nolimits\fi                                          
  \eat@}
 \else                                                                
 \def\ints@@@{\iflimits@\negintic@
  \mathop{\intic@\multintlimits@}\limits\else
  \multint@\nolimits\fi}\fi\ints@@@}%
\def\intkern@{\mathchoice{\!\!\!}{\!\!}{\!\!}{\!\!}}%
\def\plaincdots@{\mathinner{\cdotp\cdotp\cdotp}}%
\def\intdots@{\mathchoice{\plaincdots@}%
 {{\cdotp}\mkern1.5mu{\cdotp}\mkern1.5mu{\cdotp}}%
 {{\cdotp}\mkern1mu{\cdotp}\mkern1mu{\cdotp}}%
 {{\cdotp}\mkern1mu{\cdotp}\mkern1mu{\cdotp}}}%
%
%
%
\def\RIfM@{\relax\protect\ifmmode}
\def\text{\RIfM@\expandafter\text@\else\expandafter\mbox\fi}
\let\nfss@text\text
\def\text@#1{\mathchoice
   {\textdef@\displaystyle\f@size{#1}}%
   {\textdef@\textstyle\tf@size{\firstchoice@false #1}}%
   {\textdef@\textstyle\sf@size{\firstchoice@false #1}}%
   {\textdef@\textstyle \ssf@size{\firstchoice@false #1}}%
   \glb@settings}

\def\textdef@#1#2#3{\hbox{{%
                    \everymath{#1}%
                    \let\f@size#2\selectfont
                    #3}}}
\newif\iffirstchoice@
\firstchoice@true
%
%
\def\Let@{\relax\iffalse{\fi\let\\=\cr\iffalse}\fi}%
\def\vspace@{\def\vspace##1{\crcr\noalign{\vskip##1\relax}}}%
\def\multilimits@{\bgroup\vspace@\Let@
 \baselineskip\fontdimen10 \scriptfont\tw@
 \advance\baselineskip\fontdimen12 \scriptfont\tw@
 \lineskip\thr@@\fontdimen8 \scriptfont\thr@@
 \lineskiplimit\lineskip
 \vbox\bgroup\ialign\bgroup\hfil$\m@th\scriptstyle{##}$\hfil\crcr}%
\def\Sb{_\multilimits@}%
\def\endSb{\crcr\egroup\egroup\egroup}%
\def\Sp{^\multilimits@}%

%
%
%
\newdimen\ex@
\ex@.2326ex
\def\rightarrowfill@#1{$#1\m@th\mathord-\mkern-6mu\cleaders
 \hbox{$#1\mkern-2mu\mathord-\mkern-2mu$}\hfill
 \mkern-6mu\mathord\rightarrow$}%
\def\leftarrowfill@#1{$#1\m@th\mathord\leftarrow\mkern-6mu\cleaders
 \hbox{$#1\mkern-2mu\mathord-\mkern-2mu$}\hfill\mkern-6mu\mathord-$}%
\def\leftrightarrowfill@#1{$#1\m@th\mathord\leftarrow
\mkern-6mu\cleaders
 \hbox{$#1\mkern-2mu\mathord-\mkern-2mu$}\hfill
 \mkern-6mu\mathord\rightarrow$}%
\def\overrightarrow{\mathpalette\overrightarrow@}%
\def\overrightarrow@#1#2{\vbox{\ialign{##\crcr\rightarrowfill@#1\crcr
 \noalign{\kern-\ex@\nointerlineskip}$\m@th\hfil#1#2\hfil$\crcr}}}%

\def\overleftarrow{\mathpalette\overleftarrow@}%
\def\overleftarrow@#1#2{\vbox{\ialign{##\crcr\leftarrowfill@#1\crcr
 \noalign{\kern-\ex@\nointerlineskip}$\m@th\hfil#1#2\hfil$\crcr}}}%
\def\overleftrightarrow{\mathpalette\overleftrightarrow@}%
\def\overleftrightarrow@#1#2{\vbox{\ialign{##\crcr
   \leftrightarrowfill@#1\crcr
 \noalign{\kern-\ex@\nointerlineskip}$\m@th\hfil#1#2\hfil$\crcr}}}%
\def\underrightarrow{\mathpalette\underrightarrow@}%
\def\underrightarrow@#1#2{\vtop{\ialign{##\crcr$\m@th\hfil#1#2\hfil
  $\crcr\noalign{\nointerlineskip}\rightarrowfill@#1\crcr}}}%

\def\underleftarrow{\mathpalette\underleftarrow@}%
\def\underleftarrow@#1#2{\vtop{\ialign{##\crcr$\m@th\hfil#1#2\hfil
  $\crcr\noalign{\nointerlineskip}\leftarrowfill@#1\crcr}}}%
\def\underleftrightarrow{\mathpalette\underleftrightarrow@}%
\def\underleftrightarrow@#1#2{\vtop{\ialign{##\crcr$\m@th
  \hfil#1#2\hfil$\crcr
 \noalign{\nointerlineskip}\leftrightarrowfill@#1\crcr}}}%

\def\qopnamewl@#1{\mathop{\operator@font#1}\nlimits@}
\let\nlimits@\displaylimits
\def\setboxz@h{\setbox\z@\hbox}

\def\varlim@#1#2{\mathop{\vtop{\ialign{##\crcr
 \hfil$#1\m@th\operator@font lim$\hfil\crcr
 \noalign{\nointerlineskip}#2#1\crcr
 \noalign{\nointerlineskip\kern-\ex@}\crcr}}}}

 \def\rightarrowfill@#1{\m@th\setboxz@h{$#1-$}\ht\z@\z@
  $#1\copy\z@\mkern-6mu\cleaders
  \hbox{$#1\mkern-2mu\box\z@\mkern-2mu$}\hfill
  \mkern-6mu\mathord\rightarrow$}
\def\leftarrowfill@#1{\m@th\setboxz@h{$#1-$}\ht\z@\z@
  $#1\mathord\leftarrow\mkern-6mu\cleaders
  \hbox{$#1\mkern-2mu\copy\z@\mkern-2mu$}\hfill
  \mkern-6mu\box\z@$}

\def\projlim{\qopnamewl@{proj\,lim}}
\def\injlim{\qopnamewl@{inj\,lim}}
\def\varinjlim{\mathpalette\varlim@\rightarrowfill@}
\def\varprojlim{\mathpalette\varlim@\leftarrowfill@}
\def\varliminf{\mathpalette\varliminf@{}}
\def\varliminf@#1{\mathop{\underline{\vrule\@depth.2\ex@\@width\z@
   \hbox{$#1\m@th\operator@font lim$}}}}
\def\varlimsup{\mathpalette\varlimsup@{}}
\def\varlimsup@#1{\mathop{\overline
  {\hbox{$#1\m@th\operator@font lim$}}}}

%
%
%
%
%
%
\begingroup \catcode `|=0 \catcode `[= 1
\catcode`]=2 \catcode `\{=12 \catcode `\}=12
\catcode`\\=12 
|gdef|@alignverbatim#1\end{align}[#1|end[align]]
|gdef|@salignverbatim#1\end{align*}[#1|end[align*]]

|gdef|@alignatverbatim#1\end{alignat}[#1|end[alignat]]
|gdef|@salignatverbatim#1\end{alignat*}[#1|end[alignat*]]

|gdef|@xalignatverbatim#1\end{xalignat}[#1|end[xalignat]]
|gdef|@sxalignatverbatim#1\end{xalignat*}[#1|end[xalignat*]]

|gdef|@gatherverbatim#1\end{gather}[#1|end[gather]]
|gdef|@sgatherverbatim#1\end{gather*}[#1|end[gather*]]

|gdef|@gatherverbatim#1\end{gather}[#1|end[gather]]
|gdef|@sgatherverbatim#1\end{gather*}[#1|end[gather*]]

|gdef|@multilineverbatim#1\end{multiline}[#1|end[multiline]]
|gdef|@smultilineverbatim#1\end{multiline*}[#1|end[multiline*]]

|gdef|@arraxverbatim#1\end{arrax}[#1|end[arrax]]
|gdef|@sarraxverbatim#1\end{arrax*}[#1|end[arrax*]]

|gdef|@tabulaxverbatim#1\end{tabulax}[#1|end[tabulax]]
|gdef|@stabulaxverbatim#1\end{tabulax*}[#1|end[tabulax*]]

|endgroup

\def\align{\@verbatim \frenchspacing\@vobeyspaces \@alignverbatim
You are using the "align" environment in a style in which it is not defined.}

\@namedef{align*}{\@verbatim\@salignverbatim
You are using the "align*" environment in a style in which it is not defined.}
\expandafter\let\csname endalign*\endcsname =\endtrivlist

\def\alignat{\@verbatim \frenchspacing\@vobeyspaces \@alignatverbatim
You are using the "alignat" environment in a style in which it is not defined.}

\@namedef{alignat*}{\@verbatim\@salignatverbatim
You are using the "alignat*" environment in a style in which it is not defined.}
\expandafter\let\csname endalignat*\endcsname =\endtrivlist

\def\xalignat{\@verbatim \frenchspacing\@vobeyspaces \@xalignatverbatim
You are using the "xalignat" environment in a style in which it is not defined.}

\@namedef{xalignat*}{\@verbatim\@sxalignatverbatim
You are using the "xalignat*" environment in a style in which it is not defined.}
\expandafter\let\csname endxalignat*\endcsname =\endtrivlist

\def\gather{\@verbatim \frenchspacing\@vobeyspaces \@gatherverbatim
You are using the "gather" environment in a style in which it is not defined.}

\@namedef{gather*}{\@verbatim\@sgatherverbatim
You are using the "gather*" environment in a style in which it is not defined.}
\expandafter\let\csname endgather*\endcsname =\endtrivlist

\def\multiline{\@verbatim \frenchspacing\@vobeyspaces \@multilineverbatim
You are using the "multiline" environment in a style in which it is not defined.}

\@namedef{multiline*}{\@verbatim\@smultilineverbatim
You are using the "multiline*" environment in a style in which it is not defined.}
\expandafter\let\csname endmultiline*\endcsname =\endtrivlist

\def\arrax{\@verbatim \frenchspacing\@vobeyspaces \@arraxverbatim
You are using a type of "array" construct that is only allowed in AmS-LaTeX.}

\def\tabulax{\@verbatim \frenchspacing\@vobeyspaces \@tabulaxverbatim
You are using a type of "tabular" construct that is only allowed in AmS-LaTeX.}

\@namedef{arrax*}{\@verbatim\@sarraxverbatim
You are using a type of "array*" construct that is only allowed in AmS-LaTeX.}
\expandafter\let\csname endarrax*\endcsname =\endtrivlist

\@namedef{tabulax*}{\@verbatim\@stabulaxverbatim
You are using a type of "tabular*" construct that is only allowed in AmS-LaTeX.}
\expandafter\let\csname endtabulax*\endcsname =\endtrivlist


 \def\endequation{%
     \ifmmode\ifinner 
      \iftag@
        \addtocounter{equation}{-1} 
        $\hfil
           \displaywidth\linewidth\@taggnum\egroup \endtrivlist
        \global\tag@false
        \global\@ignoretrue   
      \else
        $\hfil
           \displaywidth\linewidth\@eqnnum\egroup \endtrivlist
        \global\tag@false
        \global\@ignoretrue 
      \fi
     \else   
      \iftag@
        \addtocounter{equation}{-1} 
        \eqno \hbox{\@taggnum}
        \global\tag@false%
        $$\global\@ignoretrue
      \else
        \eqno \hbox{\@eqnnum}
        $$\global\@ignoretrue
      \fi
     \fi\fi
 } 

 \newif\iftag@ \tag@false
 
 \def\TCItag{\@ifnextchar*{\@TCItagstar}{\@TCItag}}
 \def\@TCItag#1{%
     \global\tag@true
     \global\def\@taggnum{(#1)}%
     \global\def\@currentlabel{#1}}
 \def\@TCItagstar*#1{%
     \global\tag@true
     \global\def\@taggnum{#1}%
     \global\def\@currentlabel{#1}}

  \@ifundefined{tag}{
     \def\tag{\@ifnextchar*{\@tagstar}{\@tag}}
     \def\@tag#1{%
         \global\tag@true
         \global\def\@taggnum{(#1)}}
     \def\@tagstar*#1{%
         \global\tag@true
         \global\def\@taggnum{#1}}
  }{}

\def\dfrac#1#2{{\displaystyle {#1 \over #2}}}%
%
%
%

\makeatother

\begin{document}
\title[Stabilization of coupled wave]{Behaviors of the energy of solutions
of two coupled wave equations with nonlinear damping on a compact manifold
with boundary.}
\author{M. Daoulatli}
\address{University of Dammam, King Saudi Arabia \& University of Carthage,
Tunisia}
\email[M. Daoulatli]{moez.daoulatli@infcom.rnu.tn}
\date{\today }
\subjclass[2000]{Primary: 35L05, 35B35; Secondary: 35B40, 93B07 }
\keywords{ Coupled wave, Energy decay, Stabilization, nonlinear damping.}

\begin{abstract}
In this paper we study the behaviors of the the energy of solutions of
coupled wave equations on a compact manifold with boundary in the case of \
indirect nonlinear damping . Only one of the two equations is directly
damped by a localized nonlinear damping term. Under geometric conditions on
both the coupling and the damping regions we prove that the rate of decay of
the energy of \ smooth solutions of the system is determined from a first
order differential equation .
\end{abstract}

\maketitle

\section{Introduction and Statement of the results}

Let $\left( \Omega ,g_{0}\right) $ be a $C^{\infty }$ compact connected
n-dimensional Riemannian manifold with boundary $\Gamma .$ We denote by $%
\Delta $ the Laplace-Beltrami operator on $\Omega $ for the metric $g_{0}.$
We consider a system of coupled wave equations with nonlinear damping%
\begin{equation}
\left\{ 
\begin{array}{ll}
\partial _{t}^{2}u-\Delta u+b(x)v+a(x)g(\partial _{t}u)=0 & \text{in }%
\mathbb{R}_{+}^{\ast }\times \Omega  \\ 
\partial _{t}^{2}v-\Delta v+b(x)u=0 & \text{in }\mathbb{R}_{+}^{\ast }\times
\Omega  \\ 
u=v=0 & \text{on }\mathbb{R}_{+}^{\ast }\times \Gamma  \\ 
\left( u\left( 0,x\right) ,\partial _{t}u\left( 0,x\right) \right) =\left(
u_{0},u_{1}\right) \text{ and }\left( v\left( 0,x\right) ,\partial
_{t}v\left( 0,x\right) \right) =\left( v_{0},v_{1}\right)  & \text{in }%
\Omega ,%
\end{array}%
\right.   \label{system intro}
\end{equation}%
where $g:%
\mathbb{R}
\longrightarrow 
\mathbb{R}
$ is a continuous, monotone increasing function, $g(0)=0$. In addition we
assume that%
\begin{equation*}
\begin{array}{l}
g\left( y\right) y\leq M_{0}y^{2},~~\left\vert y\right\vert <1 \\ 
my^{2}\leq g\left( y\right) y\leq My^{2},~~\left\vert y\right\vert \geq 1 \\ 
\left\Vert g^{\prime }\right\Vert _{L^{\infty }}\leq M_{1},%
\end{array}%
\end{equation*}%
for some positive real numbers $M_{0},~m,~M~$\ and $M_{1}$. In this paper,
we deal with real solutions, the general case can be treated in the same
way. With the system above we associate the energy functional given by%
\begin{equation}
\begin{array}{l}
E_{u,v}\left( t\right) =\frac{1}{2}\dint_{\Omega }\left\vert \nabla u\left(
t,x\right) \right\vert ^{2}+\left\vert \nabla v\left( t,x\right) \right\vert
^{2}+\left\vert \partial _{t}u\left( t,x\right) \right\vert ^{2}+\left\vert
\partial _{t}v\left( t,x\right) \right\vert ^{2}dx \\ 
\text{ \ \ \ \ \ \ \ \ \ \ }+\dint_{\Omega }b\left( x\right) u\left(
t,x\right) v\left( t,x\right) dx.%
\end{array}
\label{energy intro}
\end{equation}

We assume that $a$ and $b$ are two nonnegative smooth functions such that 
\begin{equation}
\left\Vert b\right\Vert _{\infty }\leq \frac{1-\delta }{\lambda ^{2}},
\label{b condition}
\end{equation}%
for some $\delta >0,$ where $\lambda $\ is the Poincar\'{e}'s constant on $%
\Omega .$ Under these assumptions we have%
\begin{equation}
\begin{array}{l}
E_{u,v}\left( 0\right) =\frac{1}{2}\dint_{\Omega }\left\vert \nabla
u_{0}\left( x\right) \right\vert ^{2}+\left\vert \nabla v_{0}\left( x\right)
\right\vert ^{2}+\left\vert u_{1}\left( x\right) \right\vert ^{2}+\left\vert
v_{1}\left( x\right) \right\vert ^{2}dx+\dint_{\Omega }b\left( x\right)
u_{0}\left( x\right) v_{0}\left( x\right) dx \\ 
\text{ \ \ \ \ \ \ \ \ \ \ }\geq \frac{\delta }{2}\dint_{\Omega }\left\vert
\nabla u_{0}\left( x\right) \right\vert ^{2}+\left\vert \nabla v_{0}\left(
x\right) \right\vert ^{2}+\left\vert u_{1}\left( x\right) \right\vert
^{2}+\left\vert v_{1}\left( x\right) \right\vert ^{2}dx,%
\end{array}
\label{b assumption intro}
\end{equation}%
for all $\left( u_{0},v_{0},u_{1},v_{1}\right) \in \mathcal{H}%
=~H_{0}^{1}\left( \Omega \right) \times H_{0}^{1}\left( \Omega \right)
\times L^{2}\left( \Omega \right) \times L^{2}\left( \Omega \right) .$

The nonlinear evolution equation $\left( \ref{system intro}\right) $\ can be
rewritten under the form 
\begin{equation}
\left\{ 
\begin{array}{c}
\dfrac{d}{dt}U+\mathcal{A}U+\mathcal{B}U=0 \\ 
U\left( 0\right) =U_{0}\in \mathcal{H}%
\end{array}%
\right.  \label{abstract system}
\end{equation}%
where 
\begin{equation*}
\begin{array}{c}
U=\left( 
\begin{array}{c}
u \\ 
v \\ 
\partial _{t}u \\ 
\partial _{t}v%
\end{array}%
\right) ,U_{0}=\left( 
\begin{array}{c}
u_{0} \\ 
v_{0} \\ 
u_{1} \\ 
v_{1}%
\end{array}%
\right) ,%
\end{array}%
\end{equation*}%
and the unbounded operator $\mathcal{A}$ on $\mathcal{H}$ is defined by%
\begin{equation*}
\begin{array}{c}
\mathcal{A=}\left( 
\begin{array}{cccc}
0 & 0 & -Id & 0 \\ 
0 & 0 & 0 & -Id \\ 
-\Delta & b & 0 & 0 \\ 
b & -\Delta & 0 & 0%
\end{array}%
\right)%
\end{array}%
\end{equation*}%
with domain 
\begin{equation*}
\begin{array}{l}
D\left( \mathcal{A}\right) =\left\{ U\in \mathcal{H};\mathcal{A}U\in 
\mathcal{H}\right\} \\ 
=\left( H_{0}^{1}\left( \Omega \right) \cap H^{2}\left( \Omega \right)
\right) \times \left( H_{0}^{1}\left( \Omega \right) \cap H^{2}\left( \Omega
\right) \right) \times H_{0}^{1}\left( \Omega \right) \times H_{0}^{1}\left(
\Omega \right) ,%
\end{array}%
\end{equation*}%
and%
\begin{equation*}
\begin{array}{c}
\mathcal{B}U=\left( 
\begin{array}{c}
0 \\ 
0 \\ 
a\left( x\right) g\left( \partial _{t}u\right) \\ 
0%
\end{array}%
\right)%
\end{array}%
\end{equation*}%
Under our assumptions and from the nonlinear semi-group theory (see for
example \cite{Ala nonl}), we can infer that for $U_{0}\in \mathcal{H},$ the
problem $\left( \ref{abstract system}\right) $ admits a unique solution $%
U\in C^{0}\left( 
\mathbb{R}
_{+},\mathcal{H}\right) .$ Moreover we have the following energy estimate%
\begin{equation}
\begin{array}{c}
E_{u,v}\left( t\right) -E_{u,v}\left( 0\right) =-\dint_{0}^{t}\dint_{\Omega
}a\left( x\right) g\left( \partial _{t}u\left( s,x\right) \right) \partial
_{t}u\left( s,x\right) dxds%
\end{array}
\label{energy estimate intro}
\end{equation}%
for all $t\geq 0.$ In addition, since $g^{\prime }\in L^{\infty }\left( 
\mathbb{R}
\right) ,$ then if $U_{0}\in D\left( \mathcal{A}\right) ,$ we have $U\in
C\left( 
\mathbb{R}
_{+},D\left( \mathcal{A}\right) \right) $ and%
\begin{equation}
\begin{array}{c}
E_{\partial _{t}u,\partial _{t}v}\left( t\right) -E_{\partial _{t}u,\partial
_{t}v}\left( 0\right) =-\dint_{0}^{t}\dint_{\Omega }a\left( x\right)
g^{\prime }\left( \partial _{t}u\left( s,x\right) \right) \left\vert
\partial _{t}^{2}u\left( s,x\right) \right\vert ^{2}dxds.%
\end{array}
\label{hight energy estimate intro}
\end{equation}

The systems like $\left( \ref{system intro}\right) $ appear in many physical
situations. Indirect damping of reversible systems occurs in several
applications in engineering and mechanics. In general it is impossible or
too expansive to damp all the components of the state, so it is important to
study stabilization properties of coupled systems with a reduced number of
feedbacks.\newline

The case of a linear damping and constant coupling b in $\left( \ref{system
intro}\right) $ has already been treated in \cite{alab-canar-komo}. They
showed that the System $\left( \ref{system intro}\right) $ cannot be
exponentially stable and that the energy decays polynomially. In \cite%
{alab-lea stabilization} Alabau et al generalized these results to cases for
which the coupling $b=b(x)$ and the damping term $a=a(x)$ satisfy the
Piecewise Multipliers Geometric Condition (PMGC) \cite{alab-lea
stabilization}. This geometric assumption is a generalization of the usual
multiplier geometric condition (or $\Gamma $-condition) of \cite{Zua, lios}
and is much more restrictive than the sharp Geometric Control Condition
(GCC). In \cite{alab-lea control} Alabau et al generalized this result and
they proved that the system $\left( \ref{system intro}\right) $ is
polynomially stable when the regions $\{a>0\}$ and $\{b>0\}$ both satisfy
the Geometric Control Condition and the coupling term satisfies a smallness
assumption. \ This result has generalized by Aloui et al \cite{Aloui-daou},
by assuming a more natural smallness condition on the infinity norm of the
coupling term $b.$ Finally we quote the result of Fu \cite{X fu} in which he
shows the logarithmic decay property without any geometric conditions on the
effective damping domain.

The problem of the indirect nonlinear damping has been studied by Alabau et
Al \cite{Ala nonl} when the system is coupled by the velocity. \ In this
case they show that the energy of these kinds of system decays as fast as
that of the corresponding scalar nonlinearly damped equation. Hence, the
coupling through velocities allows a full transmission of the damping
effects. To our knowledge no results seems to be known in the case of
indirect nonlinear damping for a coupled system coupled in displacements.

The goal of this paper is to determine the rate of decay of the energy of
coupled wave system with indirect nonlinear damping and coupled in
displacements. More precisely, we prove, under some geometric conditions on
the localized damping domain and the localized coupling domain, that the
rate of decay of the energy is determined from a first order differential
equation . In addition, we obtain that if the behavior of the damping is
close to the linear case, then the linear and the nonlinear case has the
same rate of decay. In the other case we find that the rate of decay of the
coupled system is close to the one obtained for a single damped wave
equation.

The optimality of our results is a open questions. Lower energy estimates
have been established in \cite{alabau1, Ala nonl} and \cite{ala lower 1} for
scalar one-dimensional wave equations, scalar Petrowsky equations in
two-dimensions and one-dimensional wave systems coupled by velocities. These
results can be extended to the case of one-dimensional wave systems coupled
by displacement. In our case we obtain a quasi-optimal energy decay formula
when the behavior of the damping is not close to the linear one.

A natural necessary and sufficient condition to obtain controllability for
wave equations is to assume that the control set satisfies the Geometric
Control Condition (GCC) defined in \cite{BLR,RT}. For a subset $\omega $ of $%
\Omega $ and $T>0$, we shall say that $\left( \omega ,T\right) $ satisfies
GCC if every geodesic traveling at speed one in $\Omega $ meets $\omega $ in
a time $t<T$. We say that $\omega $ satisfies GCC if there exists $T>0$ such
that $\left( \omega ,T\right) $ satisfies GCC. We also set $T_{\omega
}=inf\left\{ T>0;(\omega ,T)\text{ satisfies }GCC\right\} .$

We denote by $\omega =\left\{ a\left( x\right) >0\right\} $ the control set
and by $\mathcal{O}=\left\{ b\left( x\right) >0\right\} $ the coupling set.

\begin{description}
\item[Assumption $\left( A1\right) $: ] Unique continuation property:
\end{description}

There exists $T_{0}>0,$ such that the only solution of the system%
\begin{equation}
\left\{ 
\begin{array}{ll}
\partial _{t}^{2}u_{1}-\Delta u_{1}+b(x)u_{2}=0 & \text{in }\left(
0,T_{0}\right) \times \Omega \\ 
\partial _{t}^{2}u_{2}-\Delta u_{2}+b(x)u_{1}=0 & \text{in }\left(
0,T_{0}\right) \times \Omega \\ 
u_{1}=u_{2}=0 & \text{on }\left( 0,T_{0}\right) \times \Gamma \\ 
a\left( x\right) u_{1}=0 & \text{on }\left( 0,T_{0}\right) \times \Omega \\ 
u_{1}\in H^{1}\left( \left( 0,T_{0}\right) \times \Omega \right) \text{ and }%
u_{2}\in L^{2}\left( \left( 0,T_{0}\right) \times \Omega \right) , & 
\end{array}%
\right.  \label{unique continuation system}
\end{equation}%
is the null one $u_{1}=u_{2}=0$.

Note that the unique continuation assumption above is valid if we assume
that $\omega \cap \mathcal{O}$ satisfies the GCC (see \cite{Aloui-daou}).
Also according to Alabau et al. \cite[Proposition 4.7]{alab-lea control} we
have the following result$:$ We assume that $\omega $ and $\mathcal{O}$
satisfy the GCC, then if $\left\Vert b\right\Vert _{\infty }\leq \min \left( 
\frac{1}{5\lambda ^{2}},\frac{1}{50\lambda \sqrt{C_{T_{\omega }}}}\right) ,$
there exists $T^{\ast }\geq \max \left( T_{\omega },T_{\mathcal{O}}\right) $
such that if $T_{0}>T^{\ast }$ then the only solution of the system $\left( %
\ref{unique continuation system}\right) $ is the null one.\newline

In order to characterize decay rates for the energy, we need to introduce
several special functions, which in turn will depend on the growth of $g$
near the origin. According to \cite{las-tat} there exists a concave
continuous, strictly increasing function $h_{0}$, linear at infinity with $%
h_{0}(0)=0$ such that%
\begin{equation}
h_{0}\left( g\left( s\right) s\right) \geq \epsilon _{0}\left( \left\vert
s\right\vert ^{2}+\left\vert g\left( s\right) \right\vert ^{2}\right)
,~\left\vert s\right\vert \leq 1,  \label{h0 defin}
\end{equation}%
where $\epsilon _{0}$ is a positive constant$.$ We set%
\begin{equation}
\begin{array}{c}
h\left( s\right) =m_{a}\left( \Omega \right) h_{0}\left( \frac{s}{%
m_{a}\left( \Omega \right) }\right) ,\text{ where }m_{a}=a\left( x\right) dx.%
\end{array}
\label{h defin}
\end{equation}

\begin{description}
\item[Assumption $\left( A2\right) $] We assume that there exists $%
0<r_{0}\leq 1$ such that the function $h^{-1}\in C^{3}\left(
(0,r_{0}]\right) $ and strictly convex. In addition we suppose that 
\begin{equation}
\begin{array}{c}
\underset{s\rightarrow 0}{\lim }h^{-1}\left( s\right) =\underset{%
s\rightarrow 0}{\lim }\left( h^{-1}\right) ^{\prime }\left( s\right) =%
\underset{s\rightarrow 0}{\lim }s\left( h^{-1}\right) ^{\prime \prime
}\left( s\right) =\underset{s\rightarrow 0}{\lim }s^{2}\left( h^{-1}\right)
^{\prime \prime \prime }\left( s\right) =0,%
\end{array}
\label{assumption1}
\end{equation}%
and there exist $\beta >1$ and $\alpha _{0}>0,$ such that 
\begin{equation}
\begin{array}{l}
\underset{s\rightarrow \infty }{\lim }s\left( h^{-1}\right) ^{\prime }\left(
1/s^{\beta }\right) =\alpha _{0}, \\ 
\left( h^{-1}\right) ^{\prime }\left( s\right) \leq \beta s\left(
h^{-1}\right) ^{\prime \prime }\left( s\right) ,\text{ for all }s\in \left[
0,r_{0}\right] , \\ 
\left( \beta ^{2}-\beta \right) s\left( h^{-1}\right) ^{\prime \prime
}\left( s\right) +\beta ^{2}s^{2}\left( h^{-1}\right) ^{\prime \prime \prime
}\left( s\right) \geq 0,\text{ for all }s\in \left[ 0,r_{0}\right] .%
\end{array}
\label{assumption2}
\end{equation}%
Moreover, we assume that if $\beta s\left( h^{-1}\right) ^{\prime \prime
}\left( s\right) -\left( h^{-1}\right) ^{\prime }\left( s\right) >0,$ for
all $s\in (0,r_{0}],$ then there exists $\alpha _{1}>0,$ such that%
\begin{equation}
\begin{array}{c}
\frac{\left( h^{-1}\right) ^{\prime }\left( s\right) \left( \left( \beta
^{2}-\beta \right) s\left( h^{-1}\right) ^{\prime \prime }\left( s\right)
+\beta ^{2}s^{2}\left( h^{-1}\right) ^{\prime \prime \prime }\left( s\right)
\right) }{\beta s\left( h^{-1}\right) ^{\prime \prime }\left( s\right)
-\left( h^{-1}\right) ^{\prime }\left( s\right) }\leq \alpha _{1},\text{ for
all }s\in \left[ 0,r_{0}\right] .%
\end{array}
\label{assumption3}
\end{equation}
\end{description}

We know that in the case of linear damping we have%
\begin{equation*}
\begin{array}{c}
\medskip E_{u,v}\left( t\right) \leq \frac{c}{t}\sum_{i=0}^{1}E_{\partial
_{t}^{i}u,\partial _{t}^{i}v}\left( 0\right) ,~t>0, \\ 
\medskip U_{0}=\left( u_{0},v_{0},u_{1},v_{1}\right) \in \left(
H_{0}^{1}\left( \Omega \right) \cap H^{2}\left( \Omega \right) \right)
^{2}\times \left( H_{0}^{1}\left( \Omega \right) \right) ^{2},%
\end{array}%
\end{equation*}%
so we cannot expect to obtain a better rate of decay in the case of
nonlinear damping. More precisely we have the following result.

\begin{theorem}
We suppose that $a$ and $b$ are two smooth non-negative functions and the
conditions (\ref{b condition}) and the assumption A2 hold. In addition, we
assume that $\omega $ and $\mathcal{O}$ satisfy the GCC and the assumption
A1 holds$.$ The solution $U\left( t\right) =\left( u\left( t\right) ,v\left(
t\right) ,\partial _{t}u\left( t\right) ,\partial _{t}v\left( t\right)
\right) $ of the system $\left( \ref{system intro}\right) $\ then satisfies%
\begin{equation}
\begin{array}{l}
\medskip E_{u,v}\left( t\right) \leq C\left( 1+\sum_{i=0}^{1}E_{\partial
_{t}^{i}u,\partial _{t}^{i}v}\left( 0\right) \right) \left( \varphi \left(
t\right) \right) ^{-1},~t>0, \\ 
\medskip U_{0}=\left( u_{0},v_{0},u_{1},v_{1}\right) \in \left(
H_{0}^{1}\left( \Omega \right) \cap H^{2}\left( \Omega \right) \right)
^{2}\times \left( H_{0}^{1}\left( \Omega \right) \right) ^{2},%
\end{array}
\label{energy decay rate}
\end{equation}%
where $C$ is positive constant and $\varphi $ is a solution of the following
ODE%
\begin{equation}
\begin{array}{l}
\dfrac{d\varphi }{dt}-\frac{\epsilon _{0}}{2C_{1}}\varphi \left(
h^{-1}\right) ^{\prime }\left( 1/\varphi ^{\beta }\right) =0,~0<\varphi
^{-\beta }\left( 0\right) \leq r_{0}~\text{such that} \\ 
\left( h^{-1}\right) ^{\prime }\left( \varphi ^{-\beta }\left( 0\right)
\right) <\inf \left( \frac{2C_{1}\delta }{\epsilon _{0}}\left(
8C_{T}+2\lambda ^{2}+1\right) ^{-1},\frac{1}{\epsilon _{0}}\left( \frac{1}{m}%
+M^{2}\right) ^{-1}\right) ,%
\end{array}
\label{ode theorem 1}
\end{equation}%
where $C_{1}=C\left( T,\left\Vert b\right\Vert _{\infty },\left\Vert
a\right\Vert _{\infty }\right) $. Moreover, 
\begin{equation}
\begin{array}{c}
\text{for }U_{0}\in \mathcal{H},\underset{t\rightarrow \infty }{\lim }%
\medskip E_{u,v}\left( t\right) =0.%
\end{array}
\label{limit energy space}
\end{equation}
\end{theorem}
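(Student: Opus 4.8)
The plan is to combine a linear stabilization (observability) estimate for the coupled system — obtained from the Geometric Control Condition together with the unique continuation property (A1) — with the optimal-weight convexity method of Lasiecka--Tataru and Alabau-Boussouira to absorb the nonlinearity of $g$. The central difficulty is that only the $u$-equation carries a damping term, so the dissipation must be transported to the undamped $v$-component through the coupling $b(x)$; this is precisely where GCC on $\mathcal{O}=\left\{ b>0\right\} $ and assumption (A1) enter.

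First I would establish a stabilization inequality on a fixed interval $[0,T]$, with $T$ large enough, controlling the full energy by the localized dissipation. Using the GCC on $\omega =\left\{ a>0\right\} $, a propagation argument (microlocal defect measures, or multipliers on the piecewise-multiplier regions) controls the energy of $u$ by $\dint_{0}^{T}\dint_{\Omega }a\,g(\partial _{t}u)\,\partial _{t}u\,dx\,ds$ modulo lower-order terms. To recover the energy of the undamped component, I would use the second equation $\partial _{t}^{2}v-\Delta v+bu=0$: the coupling term $bu$ acts as a source supported in $\mathcal{O}$, and GCC on $\mathcal{O}$ lets me bound the energy of $v$ in terms of $u$ restricted to the coupling region plus the dissipation. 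The residual lower-order terms produced by these integrations by parts are then removed by a compactness--uniqueness argument built on (A1): if the estimate failed, a suitably normalized sequence would converge to a nonzero solution of the system in $\left( \ref{unique continuation system}\right) $, which must vanish. This yields an estimate of the form $E_{u,v}(T)\leq C_{1}\dint_{0}^{T}\dint_{\Omega }a\,g(\partial _{t}u)\,\partial _{t}u\,dx\,ds$.

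Next I would treat the nonlinearity and the higher-order terms. Splitting $\Omega $ into $\left\{ \left\vert \partial _{t}u\right\vert \leq 1\right\} $ and $\left\{ \left\vert \partial _{t}u\right\vert >1\right\} $, the growth hypotheses on $g$ give a linear bound on the large-velocity set, while on the small-velocity set the defining inequality $\left( \ref{h0 defin}\right) $ for $h_{0}$, together with Jensen's inequality (using concavity of $h_{0}$ and the normalization $\left( \ref{h defin}\right) $), converts the averaged dissipation into a term of the form $h^{-1}$ evaluated at the averaged energy drop. To close the estimate one needs control of $\partial _{t}^{2}u$; differentiating the system once in time produces the damped coupled system for $\left( \partial _{t}u,\partial _{t}v\right) $ with damping coefficient $a\,g^{\prime }(\partial _{t}u)$ bounded by $M_{1}$, so the same observability applies at this level via $\left( \ref{hight energy estimate intro}\right) $ — and this is exactly what forces the appearance of $\sum_{i=0}^{1}E_{\partial _{t}^{i}u,\partial _{t}^{i}v}\left( 0\right) $ in $\left( \ref{energy decay rate}\right) $. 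Carrying this over $[0,T]$ yields a nonlinear recursion for the sequence $E_{n}=E_{u,v}(nT)$.

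Finally I would invoke the comparison lemma to pass from the discrete recursion to the continuous decay rate. Assumption (A2) — in particular the strict convexity of $h^{-1}$ and the asymptotic/structural conditions $\left( \ref{assumption1}\right) $--$\left( \ref{assumption3}\right) $ with exponent $\beta $ — guarantees that the associated ODE $\left( \ref{ode theorem 1}\right) $ admits a positive increasing solution $\varphi $ and that the comparison produces exactly the bound $\left( \ref{energy decay rate}\right) $. The convergence $\left( \ref{limit energy space}\right) $ for merely $\mathcal{H}$-data then follows by density of $D\left( \mathcal{A}\right) $ in $\mathcal{H}$, the energy identity $\left( \ref{energy estimate intro}\right) $ giving monotonicity of $E_{u,v}$, and continuous dependence of the semigroup on the initial data. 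I expect the observability/transfer step — propagating the damping across the coupling and eliminating the lower-order terms through (A1) — to be the main obstacle.
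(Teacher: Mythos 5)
Your proposal breaks down at its very first step, and the failure is not technical but structural. The claimed stabilization inequality $E_{u,v}(T)\leq C_{1}\int_{0}^{T}\int_{\Omega }a\,g(\partial _{t}u)\,\partial _{t}u\,dx\,ds$ is provably false for this system: combined with the energy identity $\left( \ref{energy estimate intro}\right) $ it would give $E_{u,v}(T)\leq \frac{C_{1}}{1+C_{1}}E_{u,v}(0)$ for every finite-energy solution, and since the problem is autonomous this iterates along the semigroup to exponential decay of $E_{u,v}$ on all of $\mathcal{H}$. In the linear case $g(s)=s$ (with, say, constant small coupling $b$, which satisfies every hypothesis of the theorem) this contradicts the non-exponential-stability result of Alabau--Cannarsa--Komornik recalled in the introduction, and it also contradicts the fact, stated just before the theorem, that even for linear damping the decay is only $E_{u,v}(t)\leq \frac{c}{t}\sum_{i=0}^{1}E_{\partial _{t}^{i}u,\partial _{t}^{i}v}(0)$ \emph{with loss of derivatives}. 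The obstruction is exactly the indirect nature of the damping: any correct bound on the energy of the undamped component $v$ must pass through the time-differentiated system $\left( \ref{system derivative}\right) $, and therefore unavoidably involves the higher-order dissipation $\int \int a\,g^{\prime }(\partial _{t}u)\left\vert \partial _{t}^{2}u\right\vert ^{2}$ and the cross terms $\int_{\Omega }\left( \partial _{t}^{2}u\,\partial _{t}v-\partial _{t}^{2}v\,\partial _{t}u\right) dx$. You do acknowledge these higher-order quantities in your second paragraph, but they cannot be appended after the fact to an estimate of the form you postulate: once they are present there is no autonomous recursion for $E_{n}=E_{u,v}(nT)$ at all --- the one-step inequality couples $E_{u,v}$ to $E_{\partial _{t}u,\partial _{t}v}$ and to non-telescoping cross terms --- so the Lasiecka--Tataru-type discrete comparison lemma invoked in your final paragraph has nothing to act on.

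This is precisely why the paper does not argue by recursion. It fixes the weight $\varphi $ (the solution of $\left( \ref{ode theorem 1}\right) $) from the outset and proves integrability of $\varphi ^{\prime }E_{u,v}$ on $(0,\infty )$, via the functional $X(t)$ of Lemma \ref{lemma Xt}, which contains $\varphi E_{u,v}$, the weighted higher-order energy $k\varphi ^{\prime }E_{\partial _{t}u,\partial _{t}v}$, and the cross terms with coefficient $k_{1}$. After the weighted observability estimates (Proposition \ref{observability lemma}, and Proposition \ref{lemma compactness} which is where (A1) enters), the choice $k_{1}=8C_{T}$ makes the cross terms cancel and telescope inside $X$, the higher-order dissipation appears with a favorable sign (coefficient $k-64\left\Vert a\right\Vert _{\infty }C_{T}^{2}\left\Vert g^{\prime }\right\Vert _{L^{\infty }}\geq 0$) on the left-hand side, and the only forcing terms are $\left( \sum_{i=0}^{1}E_{\partial _{t}^{i}u,\partial _{t}^{i}v}(0)\right) \int \left\vert \varphi ^{\prime \prime }\right\vert $ and $\int \varphi H^{\ast }\left( 2C_{1}\varphi ^{\prime }/(\epsilon _{0}\varphi )\right) $, both finite by Lemma \ref{proposition phi} (concavity of $\varphi $, the limit of $\varphi ^{\prime }$, and $\beta >1$). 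Summing over intervals yields $\left( \ref{integral bound}\right) $, and the decay follows from the monotonicity of $E_{u,v}$. Your Jensen/Fenchel treatment of the nonlinearity and the final density argument are consistent with the paper, but to salvage your overall plan you would have to replace the single-sequence recursion by a two-tier, time-weighted argument tracking $E_{u,v}$ and $E_{\partial _{t}u,\partial _{t}v}$ simultaneously; as written, your first step asserts an estimate that the theorem's own hypotheses rule out.
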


\begin{remark}
The smallness condition on the infinity norm of $b$ is required to ensure
that the only solution of the system 
\begin{equation}
\begin{array}{c}
\left\{ 
\begin{array}{l}
-\Delta u+b(x)v=0 \\ 
-\Delta v+b(x)u=0 \\ 
\begin{array}{ll}
u=v=0 & \text{on }\Gamma%
\end{array}%
\end{array}%
\right.%
\end{array}
\label{counterexample-utelda}
\end{equation}%
is the null one.
\end{remark}

\begin{remark}
We note that $\theta \left( t\right) =\dfrac{1}{\varphi \left( t\right) }$
is a solution of the following ODE%
\begin{equation}
\begin{array}{c}
\dfrac{d\theta }{dt}+\frac{\epsilon _{0}}{2C_{1}}\theta \left( h^{-1}\right)
^{\prime }\left( \theta ^{\beta }\right) =0.%
\end{array}
\label{one over phi ode}
\end{equation}
\end{remark}

To prove our result it is sufficient to show the integrability of $\varphi
^{\prime }E_{u,v}$ on $(0,\infty )$. For this purpose we show an estimate on
a functional $X(t)$ which is equivalent to the weighted energy functional
(See \cite{daou ext} for similar idea). Also we prove a weighted
observability estimate for the wave equation with a potential. In addition,
we use the unique continuation hypotheses (A1) to prove a weak observability
estimate of the weighted $L^{2}$-norm of the solution.

\subsection{Some examples of decay rates and lower energy estimates}

We give some examples of feedback growths together with the resulting energy
decay rate when applying our results. For clarity of exposition we will deal
with the damping which satisfies strict bounds, i.e. by saying $g(s)s\simeq
f(s)$ we will mean there are constants $m,M$ so that $mf(s)\leq g(s)s\leq
Mf(s)$. In the sequel $\tilde{C}$ denotes a generic positive constant which
is independents of the energy of the initial data and setting $%
E_{0}=1+\sum_{i=0}^{1}E_{\partial _{t}^{i}u,\partial _{t}^{i}v}\left(
0\right) $. Below we assume that $\varphi \left( 0\right) $ verifies the
condition $\left( \ref{ode theorem 1}\right) .$

First we give an explicit upper bound of solutions of the ordinary
differential equation $\left( \ref{one over phi ode}\right) .$

\begin{lemma}
We assume that there exists $0<r_{0}\leq 1$ such that the function $%
h^{-1}\in C^{2}\left( (0,r_{0}]\right) ,$ monotone increasing and strictly
convex. In addition we suppose that there exists $\alpha >0,$ such that 
\begin{equation}
\left( h^{-1}\right) ^{\prime }\left( s\right) \leq \alpha s\left(
h^{-1}\right) ^{\prime \prime }\left( s\right) ,\text{ for all }s\in
(0,r_{0}].  \label{gamma assumption}
\end{equation}%
Let $\theta $ be a solution of the following ODE%
\begin{equation*}
\dfrac{d\theta }{dt}+C\theta \left( h^{-1}\right) ^{\prime }\left( \theta
^{\beta }\right) =0,~\text{such that }0<\theta \left( 0\right) \leq r_{0},
\end{equation*}%
where $C$ is a positive constant and $\beta \geq 1$. We have%
\begin{equation*}
\begin{array}{c}
\theta \left( t\right) \leq \left( \left( \left( h^{-1}\right) ^{\prime
}\right) ^{-1}\left( \frac{\alpha /\beta C}{t+k_{0}}\right) \right)
^{1/\beta },\text{ for all }t\geq 0,%
\end{array}%
\end{equation*}%
where%
\begin{equation}
\frac{\alpha /\beta C}{\left( h^{-1}\right) ^{\prime }\left( r_{0}\right) }%
\leq k_{0}\leq \frac{\alpha /\beta C}{\left( h^{-1}\right) ^{\prime }\left(
\theta ^{\beta }\left( 0\right) \right) }.  \label{k assumption}
\end{equation}
\end{lemma}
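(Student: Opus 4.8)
The plan is to reduce the nonlinear ODE to a Riccati-type differential inequality for the well-chosen quantity $w(t):=(h^{-1})'\big(\theta(t)^{\beta}\big)$, integrate that inequality explicitly, and then invert $(h^{-1})'$ to recover the stated bound on $\theta$. First I would record the qualitative features of the solution. Since $C>0$, $\theta>0$ and $(h^{-1})'>0$ (because $h^{-1}$ is increasing), the right-hand side of the ODE is strictly negative, so $\theta$ is strictly decreasing and remains in $(0,r_{0}]$ for all $t\geq 0$; in particular $\theta(t)^{\beta}\leq r_{0}^{\beta}\leq r_{0}\leq 1$ stays in the interval where $h^{-1}\in C^{2}$ and the hypothesis $(\ref{gamma assumption})$ applies. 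This keeps every subsequent computation legitimate.

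The main computation is the following. Differentiating $w(t)=(h^{-1})'(\theta^{\beta})$ and substituting $\theta'=-C\theta\,(h^{-1})'(\theta^{\beta})$ gives
\begin{equation*}
w'(t)=-C\beta\,\theta^{\beta}\,(h^{-1})''(\theta^{\beta})\,(h^{-1})'(\theta^{\beta}).
\end{equation*}
Here is where the structural assumption $(\ref{gamma assumption})$ enters: rewriting it as $s\,(h^{-1})''(s)\geq \tfrac{1}{\alpha}(h^{-1})'(s)$ with $s=\theta^{\beta}$ and using $(h^{-1})'>0$ yields
\begin{equation*}
w'(t)\leq -\frac{C\beta}{\alpha}\,\big((h^{-1})'(\theta^{\beta})\big)^{2}=-\frac{C\beta}{\alpha}\,w(t)^{2}.
\end{equation*}
Thus $w$ satisfies the Riccati differential inequality $w'\leq -\tfrac{C\beta}{\alpha}\,w^{2}$ with $w>0$.

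I would then integrate this inequality in the standard way: since $\tfrac{d}{dt}(1/w)=-w'/w^{2}\geq \tfrac{C\beta}{\alpha}$, integrating over $[0,t]$ gives $1/w(t)\geq 1/w(0)+\tfrac{C\beta}{\alpha}t$, that is,
\begin{equation*}
(h^{-1})'(\theta(t)^{\beta})=w(t)\leq \frac{\alpha/\beta C}{\,t+k_{0}\,},\qquad k_{0}:=\frac{\alpha/\beta C}{(h^{-1})'(\theta^{\beta}(0))}.
\end{equation*}
Because $h^{-1}$ is strictly convex, $(h^{-1})'$ is strictly increasing, hence invertible with increasing inverse; applying $\big((h^{-1})'\big)^{-1}$ and taking $\beta$-th roots produces exactly the claimed upper bound for $\theta(t)$. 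The bracketing of $k_{0}$ then follows from monotonicity: from $\theta(0)\leq r_{0}\leq 1$ and $\beta\geq 1$ we get $\theta^{\beta}(0)\leq r_{0}$, so $(h^{-1})'(\theta^{\beta}(0))\leq (h^{-1})'(r_{0})$, which places the explicit value of $k_{0}$ at the upper endpoint of the stated interval, while any smaller admissible $k_{0}$ only enlarges $\tfrac{\alpha/\beta C}{t+k_{0}}$ and hence only weakens the bound.

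The one point requiring genuine insight — and the only real obstacle — is identifying the correct substitution $w=(h^{-1})'(\theta^{\beta})$ that converts hypothesis $(\ref{gamma assumption})$ into an autonomous Riccati inequality; once this is found, the remaining steps (monotonicity of $\theta$, integration of the inequality, inversion of the increasing map $(h^{-1})'$) are routine. A secondary technical check is to confirm that $\theta$ never leaves $(0,r_{0}]$, so that the $C^{2}$ regularity and $(\ref{gamma assumption})$ hold along the entire trajectory; this follows from the negativity of $\theta'$ noted at the outset.
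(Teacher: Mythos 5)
Your proof is correct, and it takes a genuinely different route from the paper's. You substitute $w=(h^{-1})'(\theta^{\beta})$, convert hypothesis $(\ref{gamma assumption})$ into the Riccati inequality $w'\leq -\frac{C\beta}{\alpha}w^{2}$, integrate $(1/w)'\geq \frac{C\beta}{\alpha}$ directly, and then invert the strictly increasing map $(h^{-1})'$; nothing is needed beyond the fundamental theorem of calculus. The paper argues in the opposite direction: it takes the claimed bound $\psi(t)=\bigl(((h^{-1})')^{-1}\bigl(\frac{\alpha/\beta C}{t+k_{0}}\bigr)\bigr)^{1/\beta}$ as an ansatz, uses $(\ref{gamma assumption})$ to verify $\psi'\geq -\frac{\alpha}{\beta}\frac{\psi}{t+k_{0}}$, notes the identity $\psi\,(h^{-1})'(\psi^{\beta})=\frac{\alpha}{\beta C}\frac{\psi}{t+k_{0}}$, deduces the supersolution property $\psi'+C\psi (h^{-1})'(\psi^{\beta})\geq 0$ together with $\psi(0)\geq\theta(0)$ (this is where the upper bound in $(\ref{k assumption})$ enters), and concludes $\theta\leq\psi$ from the comparison result of \cite[Lemma 1]{daou jmaa}. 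Both arguments exploit $(\ref{gamma assumption})$ in exactly the same way; yours has the advantage of being self-contained and of explaining where the formula for the bound actually comes from, while the paper's supersolution-plus-comparison template is shorter given the cited lemma and is the pattern reused elsewhere in the paper (for instance in the lower-bound proposition). Two details you should make explicit in a final write-up: strict positivity of $(h^{-1})'$ on $(0,r_{0}]$, which you need both for the sign of $\theta'$ and to divide by $w$, follows from monotonicity combined with strict convexity, as you indicate; and positivity of $\theta(t)$ for all $t\geq 0$ does not follow from $\theta'<0$ alone, but it does follow either from Gronwall's inequality applied to $\theta'\geq -C(h^{-1})'(r_{0})\,\theta$, or simply from the requirement that $(h^{-1})'(\theta^{\beta})$ be defined along the given solution.
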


\begin{proof}
Let 
\begin{equation*}
\psi \left( t\right) =\left( \left( \left( h^{-1}\right) ^{\prime }\right)
^{-1}\left( \frac{\alpha /\beta C}{t+k_{0}}\right) \right) ^{1/\beta },\text{
for all }t\geq 0.
\end{equation*}%
\ Direct computations and $\left( \ref{assumption2}\right) $, give%
\begin{equation*}
\psi ^{\prime }\left( t\right) \geq -\frac{\alpha }{\beta }\frac{\psi \left(
t\right) }{t+k_{0}},\text{ for all }t\geq 0.
\end{equation*}%
On the other hand, it is easy to see that%
\begin{equation*}
\psi \left( t\right) \left( h^{-1}\right) ^{\prime }\left( \psi ^{\beta
}\left( t\right) \right) =\frac{\alpha }{\beta C}\frac{\psi \left( t\right) 
}{t+k_{0}},\text{ for all }t\geq 0.
\end{equation*}%
Therefore, using $\left( \ref{k assumption}\right) $, we conclude that 
\begin{equation*}
\dfrac{d\psi }{dt}+C\psi \left( h^{-1}\right) ^{\prime }\left( \psi ^{\beta
}\right) \geq 0,\text{ }\psi \left( 0\right) \geq \theta \left( 0\right) .
\end{equation*}%
The desired result follows from \cite[Lemma 1]{daou jmaa}.
\end{proof}

Now we give some examples.

\begin{example}[Linearly bounded case]
Suppose $g\left( s\right) s\simeq s^{2}$. According to (\ref{h0 defin}),
auxiliary function $h_{0}$ which may be defined as $h_{0}(y)=(cy)^{\gamma }$
with $1/2<\gamma <1$ and for suitable constant $c>0$. We use the ODE 
\begin{equation*}
\begin{array}{l}
\dfrac{d\varphi }{dt}-\frac{\epsilon _{0}}{2C_{1}}\varphi \left(
h_{0}^{-1}\right) ^{\prime }\left( \frac{1}{m_{a}\left( \Omega \right) }%
\varphi ^{-\frac{\gamma }{1-\gamma }}\right) =0.%
\end{array}%
\end{equation*}%
Consequently, 
\begin{equation*}
\begin{array}{l}
E_{u,v}(t)\leq \frac{\tilde{C}E_{0}}{t+1},\qquad t\geq 0.%
\end{array}%
\end{equation*}
\end{example}

\begin{example}
Suppose $g\left( s\right) s\simeq s^{2}\left( \ln \left( 1/s\right) \right)
^{-p}$, $0<\left\vert s\right\vert <1/2$ for some $p>0$. According to (\ref%
{h0 defin}), auxiliary function $h_{0}$ which may be defined as $%
h_{0}(y)=(cy)^{\gamma }$ with $1/2<\gamma <1$ and for suitable constant $c>0$%
. We use the ODE 
\begin{equation*}
\begin{array}{l}
\dfrac{d\varphi }{dt}-\frac{\epsilon _{0}}{2C_{1}}\varphi \left(
h_{0}^{-1}\right) ^{\prime }\left( \frac{1}{m_{a}\left( \Omega \right) }%
\varphi ^{-\frac{\gamma }{1-\gamma }}\right) =0.%
\end{array}%
\end{equation*}%
Consequently, 
\begin{equation*}
\begin{array}{l}
E_{u,v}(t)\leq \frac{\tilde{C}E_{0}}{t+1},\qquad t\geq 0.%
\end{array}%
\end{equation*}
\end{example}

\begin{example}[The Polynomial Case]
Suppose $g\left( s\right) s\simeq |s|^{p+1}$, $0<\left\vert s\right\vert <1$
for some $p>1$. According to (\ref{h0 defin}), auxiliary function $h_{0}$
which may be defined as $h_{0}(y)=(cy)^{2/(p+1)}$ for suitable constant $c>0$
(determined by the coefficients in the polynomial bound on the damping $g(s)$%
). We use the ODE 
\begin{equation*}
\begin{array}{l}
\dfrac{d\varphi }{dt}-\frac{\epsilon _{0}}{2C_{1}}\varphi \left(
h_{0}^{-1}\right) ^{\prime }\left( \frac{1}{m_{a}\left( \Omega \right) }%
\varphi ^{-\beta }\right) =0.%
\end{array}%
\end{equation*}%
Consequently, 
\begin{equation*}
\begin{array}{l}
E_{u,v}(t)\leq \frac{\tilde{C}E_{0}}{\left( t+1\right) ^{2/\beta \left(
p-1\right) }},\qquad t\geq 0,%
\end{array}%
\end{equation*}%
where%
\begin{equation*}
\begin{array}{l}
\begin{array}{ll}
\beta >1 & \text{if }p\geq 3 \\ 
\beta =\frac{2}{P-1} & \text{if }p<3.%
\end{array}%
\end{array}%
\end{equation*}
\end{example}

\begin{example}[Exponential damping at the origin]
\label{example:exp at origin}Assume: $g\left( s\right) =s^{3}e^{-1/s^{2}}$, $%
0<\left\vert s\right\vert <1$. First we need to determine $h_{0}$ according
to (\ref{h0 defin}). Setting $h_{0}(g(y)y)=cy^{2}$, we see that 
\begin{equation*}
h_{0}^{-1}(y)=\sqrt{y/c}\cdot g\left( \sqrt{y/c}\right) =c^{-2}y^{2}\exp
(-c/y)
\end{equation*}%
We use the ODE 
\begin{equation*}
\begin{array}{l}
\dfrac{d\varphi }{dt}-\frac{\epsilon _{0}}{2C_{1}}\varphi \left(
h_{0}^{-1}\right) ^{\prime }\left( \frac{1}{m_{a}\left( \Omega \right) }%
\varphi ^{-\beta }\right) =0,%
\end{array}%
\end{equation*}%
to obtain 
\begin{equation*}
\begin{array}{l}
E_{u,v}(t)\leq \frac{\tilde{C}E_{0}}{\left( \ln \left( t+2\right) \right)
^{1/\beta }},\qquad t\geq 0,%
\end{array}%
\end{equation*}%
for all $\beta >1.$
\end{example}

\begin{example}[Exponential damping at the origin]
\label{example:exp at origin copy(1)}Assume: $g\left( s\right)
=s^{3}e^{-e^{1/s^{2}}}$, $0<\left\vert s\right\vert <1$. First we need to
determine $h_{0}$ according to (\ref{h0 defin}). Setting $%
h_{0}(g(y)y)=cy^{2} $, we see that 
\begin{equation*}
h_{0}^{-1}(y)=\sqrt{y/c}\cdot g\left( \sqrt{y/c}\right) =c^{-2}y^{2}\exp
(-\exp \left( c/y\right) )
\end{equation*}%
We use the ODE 
\begin{equation*}
\begin{array}{l}
\dfrac{d\varphi }{dt}-\frac{\epsilon _{0}}{2C_{1}}\varphi \left(
h_{0}^{-1}\right) ^{\prime }\left( \frac{1}{m_{a}\left( \Omega \right) }%
\varphi ^{-\beta }\right) =0,%
\end{array}%
\end{equation*}%
to obtain 
\begin{equation*}
\begin{array}{l}
E_{u,v}(t)\leq \frac{\tilde{C}E_{0}}{\left( \ln \ln \left( t+e^{2}\right)
\right) ^{1/\beta }},\qquad t\geq 0,%
\end{array}%
\end{equation*}%
for all $\beta >1.$
\end{example}

We finish this part by giving a result on the lower estimate of the energy
of the one-dimensional coupled wave system.

\begin{proposition}
We suppose that $\Omega =(0,1)$ and $g$ is a odd function. We set 
\begin{equation*}
\begin{array}{c}
h^{-1}\left( s\right) =g\left( \sqrt{s}\right) \sqrt{s},\text{ for }s\geq 0.%
\end{array}%
\end{equation*}%
We assume that $a$ and $b$ are two smooth non-negative functions and the
conditions (\ref{b condition}) and the assumption A2 hold. In addition, we
suppose that $\omega $ and $\mathcal{O}$ satisfy the GCC and the assumption
A1 holds$.$ Let $U\left( t\right) =\left( u\left( t\right) ,v\left( t\right)
,\partial _{t}u\left( t\right) ,\partial _{t}v\left( t\right) \right) $ be
the solution of the system $\left( \ref{system intro}\right) ,$\ then there
exists $T_{0}>0$ such that%
\begin{equation*}
\begin{array}{c}
E_{u,v}\left( t\right) \geq \left( \frac{\psi \left( t\right) }{4\sqrt{%
E_{\partial _{t}u,\partial _{t}v}\left( 0\right) }}\right) ^{2},~t\geq T_{0},
\\ 
U_{0}=\left( u_{0},v_{0},u_{1},v_{1}\right) \in \left( H_{0}^{1}\left(
\Omega \right) \cap H^{2}\left( \Omega \right) \right) ^{2}\times \left(
H_{0}^{1}\left( \Omega \right) \right) ^{2},%
\end{array}%
\end{equation*}%
where $\psi $ is a solution of the following ODE%
\begin{equation}
\dfrac{d\psi }{dt}+\left\Vert a\right\Vert _{L^{\infty }}\psi \left(
h^{-1}\right) ^{\prime }\left( \psi \right) =0,~0<\psi \left( 0\right) \leq 4%
\sqrt{E_{\partial _{t}u,\partial _{t}v}\left( 0\right) E_{u,v}\left(
T_{0}\right) }.  \label{psi lower ode}
\end{equation}
\end{proposition}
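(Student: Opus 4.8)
The plan is to bound the dissipation rate of $E_{u,v}$ from above by a function of $E_{u,v}$ alone, and then to compare $E_{u,v}$ with the solution $\psi$ of \eqref{psi lower ode} by an ODE argument. Write $E_1(t)=E_{\partial _t u,\partial _t v}(t)$. Since $g$ is odd and $h^{-1}(s)=g(\sqrt{s})\sqrt{s}$, we have $g(y)y=h^{-1}(y^{2})$ for every $y\in\mathbb{R}$, so differentiating \eqref{energy estimate intro} gives
\begin{equation*}
-\frac{d}{dt}E_{u,v}(t)=\int_{\Omega}a(x)\,h^{-1}\!\big((\partial _t u)^{2}\big)\,dx=:D(t)\ge 0 .
\end{equation*}
I would aim to show that $G(t):=4\sqrt{E_1(0)\,E_{u,v}(t)}$ is a supersolution of the autonomous equation satisfied by $\psi$, namely $G'+\|a\|_{L^{\infty}}G\,(h^{-1})'(G)\ge 0$. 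Since $G^{2}=16\,E_1(0)E_{u,v}$ gives $G'=-8E_1(0)D/G$, a direct computation shows this supersolution inequality is exactly equivalent to the pointwise dissipation bound
\begin{equation*}
D(t)\le 2\,\|a\|_{L^{\infty}}\,E_{u,v}(t)\,(h^{-1})'\!\Big(4\sqrt{E_1(0)\,E_{u,v}(t)}\Big).
\end{equation*}

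The heart of the proof is establishing this bound. Because $h^{-1}$ is convex with $h^{-1}(0)=0$, one has $h^{-1}(s)\le s\,(h^{-1})'(s)$, and since $(h^{-1})'$ is nondecreasing this yields
\begin{equation*}
D(t)\le \|a\|_{L^{\infty}}\,(h^{-1})'\!\big(\|\partial _t u\|_{L^{\infty}}^{2}\big)\int_{\Omega}(\partial _t u)^{2}\,dx .
\end{equation*}
In one space dimension the Agmon inequality $\|w\|_{L^{\infty}}^{2}\le \|w\|_{L^{2}}\|w_{x}\|_{L^{2}}$ holds for $w\in H_0^{1}(0,1)$; applied to $w=\partial _t u$, together with the equivalence of $E_{u,v}$ and $E_1$ to the squared $\mathcal{H}$-norms of $U$ and $\partial _t U$ guaranteed by the smallness condition \eqref{b condition} (so that $\|\partial _t u\|_{L^{2}}^{2}\le 2E_{u,v}$ and $\|\partial _x\partial _t u\|_{L^{2}}^{2}\le C\,E_1$), and with the monotonicity $E_1(t)\le E_1(0)$ coming from \eqref{hight energy estimate intro}, it gives $\|\partial _t u(t)\|_{L^{\infty}}^{2}\le 4\sqrt{E_1(0)E_{u,v}(t)}$. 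Combining the two displays with $\int_{\Omega}(\partial _t u)^{2}\le 2E_{u,v}$ and the monotonicity of $(h^{-1})'$ produces the required dissipation bound.

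Finally, Assumption $A1$ (equivalently, nontriviality of the solution, preventing finite-time extinction) guarantees $E_{u,v}(T_0)>0$, so one may pick $\psi(0)\in(0,\,4\sqrt{E_1(0)E_{u,v}(T_0)}]$ and then $G(T_0)\ge\psi(0)$. As $G$ is a supersolution of the same autonomous ODE satisfied by $\psi$ and $z\mapsto z(h^{-1})'(z)$ is regular enough for comparison, the comparison principle (cf. \cite[Lemma 1]{daou jmaa}) gives $G(t)\ge\psi(t-T_0)$ for $t\ge T_0$; since $\psi$ is decreasing, $\psi(t-T_0)\ge\psi(t)$, whence $G(t)\ge\psi(t)$ and therefore $E_{u,v}(t)=G(t)^{2}/(16E_1(0))\ge\big(\psi(t)/(4\sqrt{E_1(0)})\big)^{2}$. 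The main obstacle is the pointwise dissipation estimate of the second paragraph: the whole argument rests on converting the spatial integral of $h^{-1}((\partial _t u)^{2})$ into a function of $E_{u,v}$ alone, which forces one to use simultaneously the convexity inequality for $h^{-1}$, the monotonicity of $(h^{-1})'$, and the $1$D $L^{\infty}$ control of $\partial _t u$ by the square root of the product of the two energy levels; keeping track of the constants carefully enough that the normalizing factor $4$ survives is the delicate point.
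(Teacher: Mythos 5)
Your strategy coincides with the paper's own: the paper's key differential inequality
\begin{equation*}
\frac{d\sqrt{E_{u,v}}}{dt}+\left\Vert a\right\Vert _{L^{\infty }}\sqrt{E_{u,v}}\left( h^{-1}\right) ^{\prime }\left( 4\sqrt{E_{\partial _{t}u,\partial _{t}v}\left( 0\right) }\sqrt{E_{u,v}}\right) \geq 0,\qquad t\geq T_{0},
\end{equation*}
is, up to the constant factor $4\sqrt{E_{\partial _{t}u,\partial _{t}v}\left( 0\right) }$, exactly your supersolution claim for $G=4\sqrt{E_{\partial _{t}u,\partial _{t}v}\left( 0\right) E_{u,v}}$, and both arguments conclude with the comparison lemma of \cite{daou jmaa}; what the paper compresses into ``we proceed as in \cite{Ala nonl}'' is what you spell out (the identity $g(y)y=h^{-1}(y^{2})$, the convexity inequality $h^{-1}(s)\leq s\left( h^{-1}\right) ^{\prime }(s)$, the one-dimensional Agmon inequality). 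The genuine gap is in your dissipation estimate: you use convexity of $h^{-1}$ and monotonicity of $\left( h^{-1}\right) ^{\prime }$ on all of $[0,\infty )$, whereas Assumption A2 provides them only on $(0,r_{0}]$ with $r_{0}\leq 1$; for large arguments the function $h^{-1}(s)=g(\sqrt{s})\sqrt{s}$ is merely pinched between $ms$ and $Ms$ and need not be convex at all. Since for small times $\left\vert \partial _{t}u(t,x)\right\vert ^{2}$ and $4\sqrt{E_{\partial _{t}u,\partial _{t}v}(0)E_{u,v}(t)}$ may well exceed $r_{0}$, your pointwise bound on $D(t)$ is not justified there. This is precisely why the conclusion holds only for $t\geq T_{0}$, and why this lower-bound proposition nevertheless assumes the GCC and A1: these hypotheses allow one to invoke Theorem 1, so that $E_{u,v}(t)\rightarrow 0$, and then Agmon together with $E_{\partial _{t}u,\partial _{t}v}(t)\leq E_{\partial _{t}u,\partial _{t}v}(0)$ forces $\left\Vert \partial _{t}u(t)\right\Vert _{L^{\infty }}^{2}$ and $4\sqrt{E_{\partial _{t}u,\partial _{t}v}(0)E_{u,v}(t)}$ into $(0,r_{0}]$ for $t\geq T_{0}$. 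Your reading of A1 as merely preventing finite-time extinction misses this mechanism, and without it the existence of $T_{0}$ (indeed the validity of every inequality in your chain) is unsupported.

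A secondary issue, which you flag but do not resolve, is the constant $4$: by $\left( \ref{b assumption intro}\right) $ the coupling degrades only the gradient terms, so $\left\Vert \partial _{t}u\right\Vert _{L^{2}}^{2}\leq 2E_{u,v}$ but only $\left\Vert \partial _{x}\partial _{t}u\right\Vert _{L^{2}}^{2}\leq \frac{2}{\delta }E_{\partial _{t}u,\partial _{t}v}$, whence Agmon yields $\left\Vert \partial _{t}u\right\Vert _{L^{\infty }}^{2}\leq \frac{2}{\sqrt{\delta }}\sqrt{E_{\partial _{t}u,\partial _{t}v}(0)E_{u,v}}$, and $\frac{2}{\sqrt{\delta }}\leq 4$ only when $\delta \geq 1/4$. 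For smaller $\delta $ the monotonicity step runs in the wrong direction, and the factor inside $\left( h^{-1}\right) ^{\prime }$ (hence in the ODE $\left( \ref{psi lower ode}\right) $) must be enlarged accordingly; this bookkeeping is hidden in the paper's citation of \cite{Ala nonl} and would need to be made explicit in a complete write-up.
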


\begin{proof}
We proceed as in \cite{Ala nonl} and using the fact that%
\begin{equation*}
h^{-1}\left( s\right) \leq s\left( h^{-1}\right) ^{\prime }\left( s\right) ,
\end{equation*}%
we see that there exists $T_{0}>0$ such that%
\begin{equation*}
\dfrac{d\sqrt{E_{u,v}}}{dt}+\left\Vert a\right\Vert _{L^{\infty }}\sqrt{%
E_{u,v}}\left( h^{-1}\right) ^{\prime }\left( 4\sqrt{E_{\partial
_{t}u,\partial _{t}v}\left( 0\right) }\sqrt{E_{u,v}}\right) \geq 0,\text{ }%
t\geq T_{0}.
\end{equation*}%
Since $\psi $ is a solution of $\left( \ref{psi lower ode}\right) ,$ then
using \cite[Lemma 1]{daou jmaa} we conclude that 
\begin{equation*}
\sqrt{E_{u,v}\left( t\right) }\geq \frac{\psi \left( t\right) }{4\sqrt{%
E_{\partial _{t}u,\partial _{t}v}\left( 0\right) }},\text{ }t\geq T_{0}.
\end{equation*}
\end{proof}

\section{Proof of Theorem 1}

First we give the following weighted observability estimate for the wave
equation with potential.

\begin{proposition}
\label{observability lemma} Let $\gamma ,\delta >0$ and $\chi \in L^{\infty
}(\Omega )$ satisfying

\begin{itemize}
\item $\chi \geq 0$ or else,

\item $\left\Vert \chi \right\Vert _{\infty }\leq \frac{\gamma ^{2}-\delta }{%
\lambda ^{2}}$.
\end{itemize}

Let $\phi $ be a positive function in $C^{2}\left( 
\mathbb{R}
_{+}\right) $ such that%
\begin{equation}
\begin{array}{l}
\text{for every }I\subset \subset 
\mathbb{R}
_{+},\text{ there exist }m,M>0 \\ 
m\leq \phi \left( s\right) \leq M,\text{ for all }s\in I\text{.}%
\end{array}
\label{PHY assumption}
\end{equation}%
In addition, if $\phi ^{\prime }$ is not the null function, we assume that$\ 
$there exists a positive constant $K$ such that%
\begin{equation}
\underset{%
\mathbb{R}
_{+}}{\sup }\left\vert \frac{\phi ^{\prime \prime }\left( t\right) }{\phi
^{\prime }\left( t\right) }\right\vert \leq K.  \label{PHY assumption 1}
\end{equation}%
Moreover we suppose that the function%
\begin{equation}
\begin{array}{c}
\text{ \ \ }t\longmapsto \left\vert \frac{\phi ^{\prime }\left( t\right) }{%
\phi \left( t\right) }\right\vert \text{ is decreasing and }\underset{%
t\rightarrow +\infty }{\lim }\left\vert \frac{\phi ^{\prime }\left( t\right) 
}{\phi \left( t\right) }\right\vert =0.\text{\ \ }%
\end{array}
\label{PHY assumption 2}
\end{equation}%
We consider also $\psi $ nonnegative smooth function on $\Omega $ such that
the set $\left( \mathcal{V}:=\left\{ \psi \left( x\right) >0\right\}
,T\right) $ satisfies the GCC. Then there exists $C_{T}>0,$ such that for
all $\left( u_{0},u_{1}\right) \in H_{0}^{1}\left( \Omega \right) \times
L^{2}\left( \Omega \right) ,$ $f\in L_{loc}^{2}\left( 
\mathbb{R}
_{+},L^{2}\left( \Omega \right) \right) $ and all $t>0,$ the solution of 
\begin{equation}
\left\{ 
\begin{array}{ll}
\partial _{t}^{2}u-\gamma ^{2}\Delta u+\chi \left( x\right) u=f & \text{in }%
\mathbb{R}_{+}^{\ast }\times \Omega \\ 
u=0 & \text{on }\mathbb{R}_{+}^{\ast }\times \Gamma \\ 
\left( u\left( 0,x\right) ,\partial _{t}u\left( 0,x\right) \right) =\left(
u_{0},u_{1}\right) & \text{in }\Omega%
\end{array}%
\right.  \label{System onservability lemma}
\end{equation}%
satisfies with%
\begin{equation*}
E_{u}\left( t\right) =\frac{1}{2}\int_{\Omega }\gamma ^{2}\left\vert \nabla
u\left( t,x\right) \right\vert ^{2}+\left\vert \partial _{t}u\left(
t,x\right) \right\vert ^{2}+\chi \left( x\right) \left\vert u\left(
t,x\right) \right\vert ^{2}dx,
\end{equation*}%
the inequality 
\begin{equation}
\int_{t}^{t+T}\phi \left( s\right) E_{u}\left( s\right) ds\leq C_{T}\left(
\int_{t}^{t+T}\phi \left( s\right) \int_{\Omega }\left( \psi \left( x\right)
\left\vert \partial _{t}u\left( s,x\right) \right\vert ^{2}+\left\vert
f\left( s,x\right) \right\vert ^{2}\right) dxds\right) .
\label{observability by a}
\end{equation}
\end{proposition}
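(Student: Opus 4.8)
The plan is to deduce the weighted estimate from the classical Geometric Control Condition observability for the free wave operator $\partial_t^2-\gamma^2\Delta$, treating the potential $\chi u$, the source $f$, and the weight $\phi$ as perturbations. First I would dispose of the weight. Hypotheses (\ref{PHY assumption}) and (\ref{PHY assumption 2}) force $\phi$ to be comparable to a constant over every window of length $T$, uniformly in $t$: since $|\phi'/\phi|$ is decreasing with limit $0$, it attains its maximum at the left endpoint, so $C_0:=\sup_{\mathbb{R}_+}|\phi'/\phi|=|\phi'(0)/\phi(0)|$ is finite, and for $s_1,s_2\in[t,t+T]$ one has $|\log(\phi(s_1)/\phi(s_2))|\le\int_{s_2}^{s_1}|\phi'/\phi|\le C_0T$, whence $\sup_{[t,t+T]}\phi\le e^{C_0T}\inf_{[t,t+T]}\phi$ with a constant independent of $t$. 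Hence it suffices to prove the unweighted inequality $\int_t^{t+T}E_u\,ds\le C_T'\big(\int_t^{t+T}\int_\Omega(\psi|\partial_tu|^2+|f|^2)\,dx\,ds\big)$ with $C_T'$ independent of $t$; bounding $\phi$ above on the left and below on the right by its window extremes then recovers (\ref{observability by a}) with $C_T=e^{C_0T}C_T'$. Because the homogeneous equation is autonomous, its observability constant is invariant under translation in $t$, so after a shift I may assume $t=0$.

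Next I would prove the unweighted estimate on $[0,T]$. Moving the zeroth-order term to the right, $u$ solves $\partial_t^2u-\gamma^2\Delta u=f-\chi u$, and the observability of the free wave operator (valid since $(\mathcal V,T)$ satisfies the GCC, see \cite{BLR,RT}) yields $\int_0^TE_u\,ds\le C\big(\int_0^T\int_\Omega\psi|\partial_tu|^2\,dx\,ds+\int_0^T\int_\Omega|f|^2\,dx\,ds+\int_0^T\int_\Omega|u|^2\,dx\,ds\big)$. Here the hypothesis on $\chi$ is used for coercivity of $E_u$: when $\chi\ge0$ the term $\int_\Omega\chi|u|^2$ is a nonnegative bonus, while when $\|\chi\|_\infty\le(\gamma^2-\delta)/\lambda^2$ Poincar\'e's inequality gives $\int_\Omega\chi|u|^2\ge-\|\chi\|_\infty\lambda^2\int_\Omega|\nabla u|^2\ge-(\gamma^2-\delta)\int_\Omega|\nabla u|^2$, so $E_u$ still controls $\|\nabla u\|_{L^2}^2+\|\partial_tu\|_{L^2}^2$.

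The remaining task, and the \textbf{main obstacle}, is to absorb the lower-order term $\int_0^T\int_\Omega|u|^2\,dx\,ds$ into the left-hand side. I would do this by a compactness--uniqueness argument. If the inequality without this term failed, there would be solutions $u_n$ with sources $f_n$, normalized by $\int_0^TE_{u_n}\,ds=1$ and satisfying $\int_0^T\int_\Omega(\psi|\partial_tu_n|^2+|f_n|^2)\,dx\,ds\to0$; the inequality already proved then forces $\int_0^T\int_\Omega|u_n|^2\,dx\,ds\ge c_0>0$ for large $n$. The normalization together with the coercivity above bounds $(u_n)$ in the energy space, so by Rellich compactness a subsequence converges strongly in $L^2((0,T)\times\Omega)$ to a limit $u$ with $\int_0^T\int_\Omega|u|^2\,dx\,ds\ge c_0>0$. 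Passing to the limit in the equation, using $f_n\to0$ and $\psi\,\partial_tu_n\to0$, shows that $u$ solves $\partial_t^2u-\gamma^2\Delta u+\chi u=0$ with $\partial_tu=0$ on $\mathcal V\times(0,T)$. A unique continuation property---the scalar analogue of Assumption (A1), valid for the wave equation with smooth potential when the observation region satisfies the GCC---then forces $u=0$, contradicting $\int_0^T\int_\Omega|u|^2\,dx\,ds\ge c_0>0$. The delicate points are securing the strong $L^2$ convergence needed to pass to the limit in the normalization and checking that the unique continuation statement applies to the limiting equation.

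A final remark on the weight. If one prefers to keep $\phi$ throughout and run the contradiction argument directly on windows $[t_n,t_n+T]$, then hypothesis (\ref{PHY assumption 1}), $\sup|\phi''/\phi'|\le K$, is exactly what guarantees that the translated, renormalized weights $s\mapsto\phi(s+t_n)/\phi(t_n)$ are precompact in $C^1([0,T])$: indeed $|\phi''/\phi'|\le K$ bounds their second derivatives, and (\ref{PHY assumption 2}) identifies the limit as a constant when $t_n\to\infty$, so the weight passes cleanly to the limit. The reduction sketched above sidesteps this by trading the weight for the uniform window-constant $e^{C_0T}$, at the cost of a larger final constant.
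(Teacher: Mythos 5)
Your first step---trading the weight for a uniform window constant---is correct and is genuinely simpler than what the paper does: from (\ref{PHY assumption 2}) alone, $\left\vert \phi '/\phi \right\vert \leq \left\vert \phi '(0)/\phi (0)\right\vert =:C_{0}$ everywhere, hence $\sup_{[t,t+T]}\phi \leq e^{C_{0}T}\inf_{[t,t+T]}\phi $ uniformly in $t$, so the weighted inequality follows from a $t$-uniform unweighted one, and hypothesis (\ref{PHY assumption 1}) becomes superfluous. The paper proceeds differently: it runs the contradiction argument directly on the weighted quantity, setting $v_{n}=\lambda _{n}^{-1}\phi ^{1/2}u_{n}(\cdot +t_{n})$, splitting into the cases $t_{n}\rightarrow \infty $ and $t_{n}$ bounded, using (\ref{PHY assumption 1})--(\ref{PHY assumption 2}) to show that the commutator terms $f_{n}^{1}$ vanish in $L^{2}$, and then quoting the unweighted observability estimate for the wave equation with potential from \cite{Aloui-daou} as a black box. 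Consequently, the real content you must supply from scratch---and the paper does not re-prove---is precisely that unweighted estimate.

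That is where your proof has a genuine gap, in the uniqueness half of the compactness--uniqueness argument. Your limit function satisfies $\partial _{t}^{2}u-\gamma ^{2}\Delta u+\chi u=0$ and $\partial _{t}u=0$ on $\mathcal{V}\times (0,T)$: it is the \emph{time derivative}, not $u$ itself, that vanishes on the observation region. The ``scalar analogue of Assumption (A1)'' is a statement about solutions vanishing on the observation set, so it does not apply here; worse, no unique continuation theorem can yield $u=0$ from this information alone, because the implication is false for a general potential: if $\chi =-\gamma ^{2}\mu _{1}$ with $\mu _{1}$ a Dirichlet eigenvalue of $-\Delta $ and $\varphi _{1}$ an associated eigenfunction, then $u(t,x)=\varphi _{1}(x)$ is a nonzero solution with $\partial _{t}u\equiv 0$. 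The correct conclusion of the limit analysis requires two steps: (i) apply unique continuation to $w=\partial _{t}u$, which solves the same equation and vanishes on $\mathcal{V}\times (0,T)$ (and since $\chi $ is only $L^{\infty }$, not smooth as you assume, Holmgren is unavailable; one needs the Robbiano--Zuily/Tataru theorem for coefficients analytic in time, which time-independent coefficients satisfy, or the classical finite-dimensionality reduction to elliptic unique continuation), concluding that $u$ is stationary; (ii) then $u=\varphi (x)$ with $-\gamma ^{2}\Delta \varphi +\chi \varphi =0$, $\varphi \in H_{0}^{1}(\Omega )$, and testing against $\varphi $ and using Poincar\'{e}'s inequality, the hypotheses $\chi \geq 0$ or $\left\Vert \chi \right\Vert _{\infty }\leq (\gamma ^{2}-\delta )/\lambda ^{2}$ force $\varphi =0$. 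This second role of the hypothesis on $\chi $---killing stationary solutions, exactly parallel to the paper's Remark that the smallness of $\left\Vert b\right\Vert _{\infty }$ ensures the stationary coupled system has only the null solution---is absent from your argument, which invokes $\chi $ only for coercivity; since your reasoning never distinguishes admissible $\chi $ from the counterexample above, it cannot be complete as written.
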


\begin{proof}
First we remark that 
\begin{equation*}
E_{u}\left( t\right) \simeq \frac{1}{2}\int_{\Omega }\left\vert \nabla
u\left( t,x\right) \right\vert ^{2}+\left\vert \partial _{t}u\left(
t,x\right) \right\vert ^{2}dx.
\end{equation*}%
To prove the estimate $\left( \ref{observability by a}\right) ,$ we argue by
contradiction. We assume that there exist a positive sequence $\left(
t_{n}\right) ,~$a sequence of functions $f_{n}\in L_{loc}^{2}\left( 
\mathbb{R}
_{+},L^{2}\left( \Omega \right) \right) $ and a sequence $\left( u_{n}\left(
t\right) \right) $ of solutions of the system $\left( \ref{System
onservability lemma}\right) $ with initial data $\left(
u_{0,n},u_{1,n}\right) \in H_{0}^{1}\left( \Omega \right) \times L^{2}\left(
\Omega \right) ,$ such that%
\begin{equation}
\begin{array}{l}
\int_{t_{n}}^{t_{n}+T}\phi \left( s\right) E_{u_{n}}\left( s\right) ds \\ 
\geq n\left( \dint_{t_{n}}^{t_{n}+T}\phi \left( s\right) \int_{\Omega }\psi
\left( x\right) \left\vert \partial _{t}u_{n}\left( s,x\right) \right\vert
^{2}+\left\vert f_{n}\left( s,x\right) \right\vert ^{2}dxds\right) .%
\end{array}
\label{proof lemma 1 observa bility wave contradiction}
\end{equation}

\begin{description}
\item[1$^{st}$ case] $t_{n}\underset{n\rightarrow +\infty }{\longrightarrow }%
\infty .$ Setting%
\begin{equation*}
\begin{array}{c}
\lambda _{n}^{2}=\int_{t_{n}}^{t_{n}+T}\phi \left( s\right) E_{u_{n}}\left(
s\right) ds\text{ and }v_{n}\left( t\right) =\frac{1}{\lambda _{n}}\left(
\phi \left( s\right) \right) ^{1/2}u_{n}\left( t+t_{n}\right) ,%
\end{array}%
\end{equation*}%
Therefore $v_{n}$ is a solution of the following system 
\begin{equation*}
\left\{ 
\begin{array}{ll}
\partial _{t}^{2}v_{n}-\gamma ^{2}\Delta v_{n}+\chi \left( x\right) v_{n}=%
\frac{1}{\lambda _{n}}\left( \phi \left( t_{n}+t\right) \right)
^{1/2}f_{n}\left( t_{n}+t,x\right) +f_{n}^{1}\left( t,x\right) & \text{in }%
\mathbb{R}_{+}^{\ast }\times \Omega \\ 
v_{n}=0 & \text{on }\mathbb{R}_{+}^{\ast }\times \Gamma \\ 
\left( v_{n}\left( 0,x\right) ,\partial _{t}v_{n}\left( 0,x\right) \right)
=\left( v_{n,0},v_{n,1}\right) & \text{in }\Omega%
\end{array}%
\right.
\end{equation*}%
where%
\begin{equation*}
\begin{array}{l}
f_{n}^{1}\left( t,x\right) =\frac{1}{2}\phi ^{\prime \prime }\left(
t+t_{n}\right) \left( \phi \left( t+t_{n}\right) \right) ^{-1}v_{n}-\frac{1}{%
4}\left( \phi ^{\prime }\left( t+t_{n}\right) \right) ^{2}\phi \left(
t+t_{n}\right) ^{-2}v_{n} \\ 
\text{ \ \ \ \ \ \ \ \ \ \ \ }+\frac{1}{\lambda _{n}}\phi ^{\prime }\left(
t+t_{n}\right) \left( \phi \left( t+t_{n}\right) \right) ^{-1/2}\partial
_{t}u_{n}.%
\end{array}%
\end{equation*}%
Thanks to $\left( \ref{proof lemma 1 observa bility wave contradiction}%
\right) ,$\ we get%
\begin{equation}
\begin{array}{l}
\dint_{0}^{T}\dint_{\Omega }\left\vert \nabla v_{n}\left( s,x\right)
\right\vert ^{2}+\frac{\phi \left( s+t_{n}\right) }{\lambda _{n}^{2}}%
\left\vert \partial _{t}u_{n}\left( s+t_{n},x\right) \right\vert ^{2}dxds=1,
\\ 
\frac{1}{\lambda _{n}^{2}}\dint_{0}^{T}\dint_{\Omega }\phi \left(
s+t_{n}\right) \left( \psi \left( x\right) \left\vert \partial
_{t}u_{n}\left( t_{n}+s,x\right) \right\vert ^{2}+\left\vert f_{n}\left(
t_{n}+s,x\right) \right\vert ^{2}\right) dxds\leq \frac{1}{n}\text{ }.%
\end{array}
\label{lemm cotrad}
\end{equation}%
Using Poincare's inequality we deduce that%
\begin{equation*}
\begin{array}{c}
\dint_{0}^{T}\dint_{\Omega }\left\vert v_{n}\left( s,x\right) \right\vert
^{2}dxds\leq \lambda ^{2}.%
\end{array}%
\end{equation*}%
Utilizing the first part of $\left( \ref{lemm cotrad}\right) $ and the
estimate above, we deduce that there exists $~\alpha _{1}>0,$ such that 
\begin{equation*}
\begin{array}{c}
\dint_{0}^{T}E_{v_{n}}\left( s\right) ds\leq \alpha _{1}.%
\end{array}%
\end{equation*}%
\ A combination of the first part of $\left( \ref{lemm cotrad}\right) ,$ $%
\left( \ref{PHY assumption 1}\right) $ and $\left( \ref{PHY assumption 2}%
\right) ,$ gives 
\begin{equation*}
\begin{array}{c}
\underset{n\rightarrow \infty }{\lim }\dint_{0}^{T}\dint_{\Omega }\left\vert
f_{n}^{1}\left( t_{n}+s,x\right) \right\vert ^{2}dxds=0\text{.}%
\end{array}%
\end{equation*}%
It is easy to see that%
\begin{equation*}
\begin{array}{l}
\dint_{0}^{T}\dint_{\Omega }\psi \left( x\right) \left\vert \partial
_{t}v_{n}\left( s,x\right) \right\vert ^{2}dxds \\ 
\leq \dint_{0}^{T}\dint_{\Omega }\frac{1}{\lambda _{n}^{2}}\phi \left(
t_{n}+s\right) \psi \left( x\right) \left\vert \partial _{t}u_{n}\left(
t_{n}+s,x\right) \right\vert ^{2}+\left\vert \frac{\phi ^{\prime }\left(
t_{n}\right) }{\phi \left( t_{n}\right) }\right\vert ^{2}\psi \left(
x\right) \left\vert v_{n}\left( s,x\right) \right\vert ^{2}dxds \\ 
\leq \dint_{0}^{T}\dint_{\Omega }\frac{1}{\lambda _{n}^{2}}\phi \left(
t_{n}+s\right) \psi \left( x\right) \left\vert \partial _{t}u_{n}\left(
t_{n}+s,x\right) \right\vert ^{2}dxds+\alpha _{2}\left\vert \frac{\phi
^{\prime }\left( t_{n}\right) }{\phi \left( t_{n}\right) }\right\vert ^{2}%
\underset{n\rightarrow +\infty }{\longrightarrow }0,%
\end{array}%
\end{equation*}%
noting that in the last result we have used the second part of $\left( \ref%
{lemm cotrad}\right) $ and $\left( \ref{PHY assumption 2}\right) $.

On the other hand, According to \cite{Aloui-daou}, we know that%
\begin{equation*}
\begin{array}{l}
\dint_{0}^{T}\dint_{\Omega }\left\vert \nabla v_{n}\left( s,x\right)
\right\vert ^{2}+\left\vert \partial _{t}v_{n}\left( s,x\right) \right\vert
^{2}dxds \\ 
\leq C_{T}\dint_{0}^{T}\dint_{\Omega }\psi \left( x\right) \left\vert
\partial _{t}v_{n}\left( s,x\right) \right\vert ^{2}+\frac{1}{\lambda
_{n}^{2}}\phi \left( t_{n}+s\right) \left\vert f_{n}\left( t_{n}+s,x\right)
\right\vert ^{2}+\left\vert f_{n}^{1}\left( s,x\right) \right\vert ^{2}dxds%
\end{array}%
\end{equation*}%
and the contradiction follows from the fact that the RHS of the estimate
above goes to zero as n goes to infinity and%
\begin{equation*}
\begin{array}{c}
1=\dint_{0}^{T}\dint_{\Omega }\left\vert \nabla v_{n}\left( s,x\right)
\right\vert ^{2}+\frac{\phi \left( s+t_{n}\right) }{\lambda _{n}^{2}}%
\left\vert \partial _{t}u_{n}\left( s+t_{n},x\right) \right\vert ^{2}dxds \\ 
\leq 2\dint_{0}^{T}\dint_{\Omega }\left\vert \nabla v_{n}\left( s,x\right)
\right\vert ^{2}+\left\vert \partial _{t}v_{n}\left( s,x\right) \right\vert
^{2}dxds+\alpha _{2}\left\vert \frac{\phi ^{\prime }\left( t_{n}\right) }{%
\phi \left( t_{n}\right) }\right\vert ^{2}.%
\end{array}%
\end{equation*}%
.

\item[2$^{nd}$ case] The sequence $\left( t_{n}\right) $ is bounded. Setting%
\begin{equation*}
\lambda _{n}^{2}=\int_{t_{n}}^{t_{n}+T}E_{u_{n}}\left( s\right) ds\text{ and 
}v_{n}\left( t\right) =\frac{1}{\lambda _{n}}u_{n}\left( t+t_{n}\right) .
\end{equation*}%
Using the fact that the sequence $\left( t_{n}\right) $ is bounded and $%
\left( \ref{PHY assumption}\right) ,$\ we infer that there exist $\alpha
_{0},~\alpha _{1}>0,$ such that 
\begin{equation*}
\begin{array}{l}
0<\alpha _{0}\leq \dint_{0}^{T}E_{v_{n}}\left( s\right) ds\leq \alpha _{1},
\\ 
\text{and }\dint_{0}^{T}\dint_{\Omega }\psi \left( x\right) \left\vert
\partial _{t}v_{n}\left( s,x\right) \right\vert ^{2}+\frac{1}{\lambda
_{n}^{2}}\left\vert f_{n}\left( s+t_{n},x\right) \right\vert ^{2}dxds%
\underset{n\rightarrow +\infty }{\longrightarrow }0.%
\end{array}%
\end{equation*}%
To finish the proof we need to proceed as in the first case.
\end{description}
\end{proof}

The result below is a week weighted observability inequality and we need it
to control the L$^{2}$ norm of the solution.

\begin{proposition}
\label{lemma compactness}We assume that $\omega $ and $\mathcal{O}$ satisfy
the GCC and the assumption A1 holds. Let $T>\max \left( T_{\omega },T_{%
\mathcal{O}},T_{0}\right) $ and $\alpha >0.$ Let $\phi $ be a positive
function in $C^{2}\left( 
\mathbb{R}
_{+}\right) $ such that%
\begin{equation}
\begin{array}{l}
\text{for every }I\subset \subset 
\mathbb{R}
_{+},\text{ there exist }m,M>0 \\ 
m\leq \phi \left( s\right) \leq M,\text{ for all }s\in I\text{.}%
\end{array}
\label{rho assumption}
\end{equation}%
In addition, if $\phi ^{\prime }$ is not the null function, we assume that$\ 
$there exists a positive constant $K$ such that%
\begin{equation}
\underset{%
\mathbb{R}
_{+}}{\sup }\left\vert \frac{\phi ^{\prime \prime }\left( t\right) }{\phi
^{\prime }\left( t\right) }\right\vert \leq K.  \label{rho assumption 1}
\end{equation}%
Moreover we suppose that 
\begin{equation}
\begin{array}{c}
\text{ the function\ \ }t\longmapsto \left\vert \frac{\phi ^{\prime }\left(
t\right) }{\phi \left( t\right) }\right\vert \text{ is decreasing and }%
\underset{t\rightarrow +\infty }{\lim }\left\vert \frac{\phi ^{\prime
}\left( t\right) }{\phi \left( t\right) }\right\vert =0.\text{\ \ }%
\end{array}
\label{rho assumption 2}
\end{equation}%
Then there exists $C_{T,\alpha }>0,$ such that for all $\left(
u_{0},v_{0},u_{1},v_{1}\right) \in \mathcal{H},$ and all $t>0,$ the solution
of the system%
\begin{equation}
\left\{ 
\begin{array}{ll}
\partial _{t}^{2}u-\Delta u+b(x)v+a(x)g\left( \partial _{t}u\right) =0 & 
\text{in }\mathbb{R}_{+}^{\ast }\times \Omega \\ 
\partial _{t}^{2}v-\Delta v+b(x)u=0 & \text{in }\mathbb{R}_{+}^{\ast }\times
\Omega \\ 
u=v=0 & \text{on }\mathbb{R}_{+}^{\ast }\times \Gamma \\ 
\left( u\left( 0,x\right) ,\partial _{t}u\left( 0,x\right) \right) =\left(
u_{0},u_{1}\right) \text{ and }\left( v\left( 0,x\right) ,\partial
_{t}v\left( 0,x\right) \right) =\left( v_{0},v_{1}\right) & \text{in }\Omega%
\end{array}%
\right.  \label{compactness lemma system}
\end{equation}%
satisfies the inequality%
\begin{equation}
\begin{array}{l}
\dint_{t}^{t+T}\phi \left( s\right) \dint_{\Omega }\left( \left\vert v\left(
s,x\right) \right\vert ^{2}+\left\vert u\left( s,x\right) \right\vert
^{2}\right) dxds \\ 
\leq C_{T,\alpha }\dint_{t}^{t+T}\phi \left( s\right) \dint_{\Omega }a\left(
x\right) \left( \left\vert g\left( \partial _{t}u\left( s,x\right) \right)
\right\vert ^{2}+\left\vert \partial _{t}u\left( s,x\right) \right\vert
^{2}\right) dxds \\ 
+\alpha \dint_{t}^{t+T}\phi \left( s\right) E_{u,v}\left( s\right) ds.%
\end{array}
\label{compactness lemma estimate}
\end{equation}
\end{proposition}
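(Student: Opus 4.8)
The plan is to establish the weak observability estimate (\ref{compactness lemma estimate}) by a compactness--uniqueness argument, following the same two--case scheme used in the proof of Proposition \ref{observability lemma}. Arguing by contradiction, I would suppose that for every $n$ there are a time $t_n>0$, initial data in $\mathcal{H}$, and a solution $(u_n,v_n)$ of (\ref{compactness lemma system}) for which the left--hand side of (\ref{compactness lemma estimate}) exceeds $n$ times the weighted damping integral plus $\alpha$ times the weighted energy integral. Normalising by $\lambda_n^2=\int_{t_n}^{t_n+T}\phi(s)\int_\Omega(|u_n|^2+|v_n|^2)\,dx\,ds$, this forces the weighted damping term, divided by $\lambda_n^2$, to tend to $0$, while the weighted energy term stays bounded (by $1/\alpha$). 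I would then split into the case $t_n\to\infty$ and the case $(t_n)$ bounded.

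In each case I rescale. When $t_n\to\infty$ I set $\tilde u_n(t,x)=\lambda_n^{-1}(\phi(t+t_n))^{1/2}u_n(t+t_n,x)$ and likewise for $v_n$, so that the displacement pair has unit $L^2((0,T)\times\Omega)$--norm; the rescaled pair then solves the coupled system with lower--order forcing terms of the type $f_n^1$ produced by the weight, which tend to $0$ in $L^2$ by (\ref{rho assumption 1}) and (\ref{rho assumption 2}). When $(t_n)$ is bounded, $\phi$ is trapped between two positive constants on the fixed interval, so I rescale by $\lambda_n$ alone and the pair solves (\ref{compactness lemma system}) shifted in time. In both cases the bound $\int_0^T E_{\tilde u_n,\tilde v_n}\,ds\le 1/\alpha$ yields a uniform bound for $(\tilde u_n,\tilde v_n)$ in $L^2(0,T;H_0^1(\Omega))$ together with $(\partial_t\tilde u_n,\partial_t\tilde v_n)$ in $L^2(0,T;L^2(\Omega))$. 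By the Aubin--Lions lemma the displacements converge strongly in $L^2((0,T)\times\Omega)$, so the limit $(\tilde u,\tilde v)$ still has unit $L^2$--norm and is in particular nonzero. Since the weighted damping tends to $0$, one has $\sqrt{a}\,\partial_t\tilde u_n\to0$ in $L^2$, the nonlinear term drops out in the limit, and $(\tilde u,\tilde v)$ is a finite--energy solution of the homogeneous coupled system satisfying $\partial_t\tilde u=0$ on $\omega\times(0,T)$.

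The contradiction then comes from unique continuation. The velocity pair $(\partial_t\tilde u,\partial_t\tilde v)$ again solves the homogeneous coupled system and satisfies $a\,\partial_t\tilde u=0$, so Assumption A1 (applicable here since $T>T_0$ and $\omega,\mathcal{O}$ satisfy the GCC) forces $\partial_t\tilde u=\partial_t\tilde v=0$. Hence $(\tilde u,\tilde v)$ is time--independent and therefore solves the stationary system (\ref{counterexample-utelda}); by the smallness condition (\ref{b condition}) on $\|b\|_\infty$, recorded in the Remark after Theorem 1, the only such solution is the trivial one, which contradicts that $(\tilde u,\tilde v)$ has unit $L^2$--norm. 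I expect the delicate points to be, first, verifying that the weight--induced forcing terms vanish uniformly as $t_n\to\infty$ so that the limiting equation is genuinely homogeneous, and second, the regularity bookkeeping required to apply A1 to the velocity pair; the remaining passage to the limit is by now standard.
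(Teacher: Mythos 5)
Your proposal is correct and follows essentially the same route as the paper: the same contradiction argument with the same two-case split on $(t_n)$, the same weighted rescaling $\lambda_n^{-1}\phi^{1/2}U_n(\cdot+t_n)$ when $t_n\to\infty$ (producing the lower-order forcing terms that vanish by the hypotheses on $\phi'/\phi$ and $\phi''/\phi'$), and the same compactness--uniqueness endgame. The only difference is one of explicitness: where the paper simply writes ``we use (A1) and proceed as in [Aloui-daou, Proof of Lemma 7]'', you spell out that endgame (unique continuation applied to the velocity pair, then the stationary system together with the smallness condition on $\Vert b\Vert_\infty$), which is precisely the content of the cited argument, including the regularity bookkeeping you correctly flag as the delicate point.
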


\begin{proof}
To prove the estimate $\left( \ref{compactness lemma estimate}\right) ,$ we
argue by contradiction. We assume that there exist a positive sequence $%
\left( t_{n}\right) $ and a sequence $\left( U_{n}\left( t\right) =\left(
u_{n}\left( t\right) ,v_{n}\left( t\right) ,\partial _{t}u_{n}\left(
t\right) ,\partial _{t}v_{n}\left( t\right) \right) \right) $ of solutions
of the system $\left( \ref{compactness lemma system}\right) $ with initial
data $\left( u_{0,n},v_{0,n},u_{1,n},v_{1,n}\right) \in \mathcal{H},$ such
that%
\begin{equation}
\begin{array}{l}
\int_{t_{n}}^{t_{n}+T}\phi \left( s\right) \dint_{\Omega }\left\vert
v_{n}\left( s,x\right) \right\vert ^{2}+\left\vert u_{n}\left( s,x\right)
\right\vert ^{2}dxds \\ 
\geq n\dint_{t_{n}}^{t_{n}+T}\phi \left( s\right) \dint_{\Omega }a\left(
x\right) \left( \left\vert g\left( \partial _{t}u_{n}\left( s,x\right)
\right) \right\vert ^{2}+\left\vert \partial _{t}u_{n}\left( s,x\right)
\right\vert ^{2}\right) dxds+\alpha \dint_{t_{n}}^{t_{n}+T}\phi \left(
s\right) E_{u_{n},v_{n}}\left( s\right) ds.%
\end{array}
\label{compactness lemma contradiction estimate}
\end{equation}

\begin{description}
\item[1$^{st}$ case] The sequence $\left( t_{n}\right) $ is bounded. Setting%
\begin{equation*}
\begin{array}{l}
\lambda _{n}^{2}=\dint_{t_{n}}^{t_{n}+T}\dint_{\Omega }\left( \left\vert
v_{n}\left( s,x\right) \right\vert ^{2}+\left\vert u_{n}\left( s,x\right)
\right\vert ^{2}\right) dxds\text{ \ and } \\ 
V_{n}\left( t\right) =\left( 
\begin{array}{cccc}
w_{n}\left( t\right) , & y_{n}\left( t\right) , & \partial _{t}w_{n}\left(
t\right) , & \partial _{t}y_{n}\left( t\right)%
\end{array}%
\right) =\frac{1}{\lambda _{n}}U_{n}\left( t+t_{n}\right) ,%
\end{array}%
\end{equation*}%
Using the fact that the sequence $\left( t_{n}\right) $\ is bounded, $\left( %
\ref{rho assumption}\right) $ and $\left( \ref{compactness lemma
contradiction estimate}\right) ,$\ we infer that there exist $\alpha
_{0},~\alpha _{1},~\alpha _{2}>0,$ such that%
\begin{equation*}
\begin{array}{l}
\dint_{0}^{T}\dint_{\Omega }\left\vert w_{n}\left( s,x\right) \right\vert
^{2}+\left\vert y_{n}\left( s,x\right) \right\vert ^{2}dxds\geq \alpha
_{0}>0, \\ 
\dint_{0}^{T}\dint_{\Omega }a\left( x\right) \left( \left\vert \partial
_{t}w_{n}\left( s,x\right) \right\vert ^{2}+\frac{1}{\lambda _{n}^{2}}%
\left\vert g\left( \partial _{t}u_{n}\left( t_{n}+s,x\right) \right)
\right\vert ^{2}\right) dxds\leq \frac{\alpha _{2}}{n}\text{ } \\ 
\text{and }\dint_{0}^{T}E_{w_{n},y_{n}}\left( s\right) ds\leq \alpha _{1}.%
\end{array}%
\end{equation*}%
To finish the proof we use the unique continuation hypotheses (A1) and we
proceed as in \cite[Proof of lemma 7]{Aloui-daou}.

\item[2$^{nd}$ case] $t_{n}\underset{n\rightarrow +\infty }{\longrightarrow }%
\infty .$ Setting%
\begin{equation*}
\begin{array}{l}
\lambda _{n}^{2}=\int_{t_{n}}^{t_{n}+T}\phi \left( s\right) \dint_{\Omega
}\left( \left\vert v_{n}\left( s,x\right) \right\vert ^{2}+\left\vert
u_{n}\left( s,x\right) \right\vert ^{2}\right) dxds\text{ \ and } \\ 
V_{n}\left( t\right) =\left( 
\begin{array}{cccc}
w_{n}\left( t\right) , & y_{n}\left( t\right) , & \partial _{t}w_{n}\left(
t\right) , & \partial _{t}y_{n}\left( t\right)%
\end{array}%
\right) =\frac{1}{\lambda _{n}}\left( \phi \left( t+t_{n}\right) \right)
^{1/2}U_{n}\left( t+t_{n}\right) ,%
\end{array}%
\end{equation*}%
Therefore%
\begin{equation*}
\left\{ 
\begin{array}{ll}
\partial _{t}^{2}w_{n}-\Delta w_{n}+b(x)y_{n}+\frac{1}{\lambda _{n}}%
a(x)\left( \phi \left( t_{n}+t\right) \right) ^{1/2}g\left( \partial
_{t}u_{n}\right) =f_{n}\left( t,x\right) & \text{in }\mathbb{R}_{+}^{\ast
}\times \Omega \\ 
\partial _{t}^{2}y_{n}-\Delta y_{n}+b(x)w_{n}=f_{n}^{1}\left( t,x\right) & 
\text{in }\mathbb{R}_{+}^{\ast }\times \Omega \\ 
w_{n}=y_{n}=0 & \text{on }\mathbb{R}_{+}^{\ast }\times \Gamma \\ 
\left( w_{n}\left( 0,x\right) ,\partial _{t}w_{n}\left( 0,x\right) \right)
=\left( w_{n,0},w_{n,1}\right) \text{ and }\left( y_{n}\left( 0,x\right)
,\partial _{t}y_{n}\left( 0,x\right) \right) =\left( y_{n,0},y_{n,1}\right)
& \text{in }\Omega%
\end{array}%
\right.
\end{equation*}%
where%
\begin{equation*}
\begin{array}{l}
f_{n}=\frac{1}{2}\phi ^{\prime \prime }\left( t+t_{n}\right) \left( \phi
\left( t+t_{n}\right) \right) ^{-1}w_{n}-\frac{1}{4}\left( \phi ^{\prime
}\left( t+t_{n}\right) \right) ^{2}\phi \left( t+t_{n}\right) ^{-2}w_{n} \\ 
+\frac{1}{\lambda _{n}}\phi ^{\prime }\left( t+t_{n}\right) \left( \phi
\left( t+t_{n}\right) \right) ^{-1/2}\partial _{t}u_{n}, \\ 
f_{n}^{1}=\frac{1}{2}\phi ^{\prime \prime }\left( t+t_{n}\right) \left( \phi
\left( t+t_{n}\right) \right) ^{-1}y_{n}-\frac{1}{4}\left( \phi ^{\prime
}\left( t+t_{n}\right) \right) ^{2}\phi \left( t+t_{n}\right) ^{-2}y_{n} \\ 
+\frac{1}{\lambda _{n}}\phi ^{\prime }\left( t+t_{n}\right) \left( \phi
\left( t+t_{n}\right) \right) ^{-1/2}\partial _{t}v_{n}.%
\end{array}%
\end{equation*}%
Thanks to $\left( \ref{compactness lemma contradiction estimate}\right) $\
we get%
\begin{equation}
\begin{array}{l}
\dint_{0}^{T}\dint_{\Omega }\left\vert w_{n}\left( s,x\right) \right\vert
^{2}+\left\vert y_{n}\left( s,x\right) \right\vert ^{2}dxds=1, \\ 
\frac{1}{\lambda _{n}^{2}}\dint_{0}^{T}\dint_{\Omega }a\left( x\right) \phi
\left( t_{n}+s\right) \left( \left\vert \partial _{t}u_{n}\left(
t_{n}+s,x\right) \right\vert ^{2}+\left\vert g\left( \partial
_{t}u_{n}\left( s+t_{n},x\right) \right) \right\vert ^{2}\right) dxds\leq 
\frac{1}{n} \\ 
\text{and }\dint_{0}^{T}\frac{\phi \left( s+t_{n}\right) }{\lambda _{n}^{2}}%
E_{u_{n},v_{n}}\left( s\right) ds\leq 1/\alpha .%
\end{array}
\label{lemm cotrad 1}
\end{equation}%
Now using the estimates above, we deduce that there exist $\alpha _{1}>0,$
such that 
\begin{equation*}
\begin{array}{c}
\dint_{0}^{T}E_{w_{n},y_{n}}\left( s\right) ds\leq \alpha _{1}.%
\end{array}%
\end{equation*}%
\ Utilizing $\left( \ref{lemm cotrad 1}\right) ,\left( \ref{rho assumption 1}%
\right) $ and $\left( \ref{rho assumption 2}\right) ,$ we can show that%
\begin{equation*}
\begin{array}{c}
\underset{n\rightarrow \infty }{\lim }\dint_{0}^{T}\dint_{\Omega }\left\vert
f_{n}\left( s,x\right) \right\vert ^{2}dxds=\underset{n\rightarrow \infty }{%
\lim }\dint_{0}^{T}\dint_{\Omega }\left\vert f_{n}^{1}\left( s,x\right)
\right\vert ^{2}dxds=0.%
\end{array}%
\end{equation*}%
On the other hand, \ it is easy to see that%
\begin{equation*}
\begin{array}{l}
\dint_{0}^{T}\dint_{\Omega }\psi \left( x\right) \left\vert \partial
_{t}v_{n}\left( s,x\right) \right\vert ^{2}dxds \\ 
\leq \dint_{0}^{T}\dint_{\Omega }\frac{1}{\lambda _{n}^{2}}\phi \left(
t_{n}+s\right) \psi \left( x\right) \left\vert \partial _{t}u_{n}\left(
t_{n}+s,x\right) \right\vert ^{2}+\left\vert \frac{\phi ^{\prime }\left(
t_{n}\right) }{\phi \left( t_{n}\right) }\right\vert ^{2}\psi \left(
x\right) \left\vert w_{n}\left( s,x\right) \right\vert ^{2}dxds \\ 
\leq \dint_{0}^{T}\dint_{\Omega }\frac{1}{\lambda _{n}^{2}}\phi \left(
t_{n}+s\right) \psi \left( x\right) \left\vert \partial _{t}u_{n}\left(
s,x\right) \right\vert ^{2}dxds+\alpha _{2}\left\vert \frac{\phi ^{\prime
}\left( t_{n}\right) }{\phi \left( t_{n}\right) }\right\vert ^{2}\underset{%
n\rightarrow +\infty }{\longrightarrow }0,%
\end{array}%
\end{equation*}%
noting that in the last result we have used the second part of $\left( \ref%
{lemm cotrad 1}\right) $ and $\left( \ref{rho assumption 2}\right) $. To
finish the proof we use the unique continuation hypotheses (A1) and we
proceed as in \cite[Proof of lemma 7]{Aloui-daou}.
\end{description}
\end{proof}

\begin{lemma}
\label{lemma Xt}Let $\varphi \in C^{2}\left( 
\mathbb{R}
_{+}\right) $ and non-decreasing. Let $u$ be a solution of the system $%
\left( \ref{system intro}\right) $ with initial data $U_{0}=\left(
u_{0},v_{0},u_{1},v_{1}\right) \in D\left( \mathcal{A}\right) .$ We set%
\begin{equation}
\begin{array}{l}
X\left( t\right) =\varphi ^{\prime }\left( t\right) \dint_{\Omega }u\left(
t,x\right) \partial _{t}u\left( t,x\right) +v\left( t,x\right) \partial
_{t}v\left( t,x\right) dx \\ 
+k_{1}^{\prime }\varphi \left( t\right) \dint_{\Omega }\partial
_{t}^{2}u\left( t,x\right) \partial _{t}v\left( t,x\right) -\partial
_{t}^{2}v\left( t,x\right) \partial _{t}u\left( t,x\right) dx \\ 
+\varphi \left( t\right) E_{u,v}\left( t\right) +k\varphi ^{\prime }\left(
t\right) E_{\partial _{t}u,\partial _{t}v}\left( t\right) ,%
\end{array}
\label{Xt definition}
\end{equation}%
where $k,~T$ and $k_{1}$ are positive constants. Then%
\begin{equation}
\begin{array}{l}
X\left( t+T\right) -X\left( t\right) +\dint_{t}^{t+T}\varphi ^{\prime
}\left( s\right) E_{u,v}\left( s\right) ds+\dint_{t}^{t+T}\dint_{\Omega
}a\left( x\right) \varphi \left( s\right) g\left( \partial _{t}u\left(
s,x\right) \right) \partial _{t}u\left( s,x\right) dxds \\ 
+k\dint_{t}^{t+T}\dint_{\Omega }a\left( x\right) \varphi ^{\prime }\left(
s\right) g^{\prime }\left( \partial _{t}u\left( s,x\right) \right)
\left\vert \partial _{t}^{2}u\left( s,x\right) \right\vert ^{2}dxds \\ 
\leq 2\dint_{t}^{t+T}\varphi ^{\prime }\left( s\right) \dint_{\Omega
}\left\vert \partial _{t}u\left( s,x\right) \right\vert ^{2}+\left\vert
\partial _{t}v\left( s,x\right) \right\vert ^{2}dxds \\ 
+\dint_{t}^{t+T}\varphi ^{\prime }\left( s\right) \dint_{\Omega }a\left(
x\right) \left( \left\vert g\left( \partial _{t}u\left( s,x\right) \right)
\right\vert ^{2}+\left\vert u\left( s,x\right) \right\vert ^{2}\right) dxds
\\ 
+\left( kE_{\partial _{t}u,\partial _{t}v}\left( 0\right)
+k_{0}E_{u,v}\left( 0\right) \right) \dint_{t}^{t+T}\left\vert \varphi
^{\prime \prime }\left( s\right) \right\vert ds \\ 
+k_{1}\left[ \varphi ^{\prime }\left( t\right) \dint_{\Omega }\partial
_{t}^{2}u\left( s,x\right) \partial _{t}v\left( s,x\right) -\partial
_{t}^{2}v\left( s,x\right) \partial _{t}u\left( s,x\right) dx\right]
_{s=t}^{s=t+T}.%
\end{array}
\label{Xt estimate}
\end{equation}
\end{lemma}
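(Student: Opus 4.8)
The plan is to read off (\ref{Xt estimate}) as an exact differentiation identity for $X$ followed by elementary estimates on the error terms. Since $U_{0}\in D(\mathcal{A})$, the semigroup regularity recalled in the introduction gives $U\in C(\mathbb{R}_{+},D(\mathcal{A}))$, so each of the four terms defining $X$ in (\ref{Xt definition}) is $C^{1}$ in $t$, and the energy identities (\ref{energy estimate intro}) and (\ref{hight energy estimate intro}) are available in differentiated form, $\frac{d}{dt}E_{u,v}=-\int_{\Omega}a\,g(\partial_{t}u)\partial_{t}u\,dx$ and $\frac{d}{dt}E_{\partial_{t}u,\partial_{t}v}=-\int_{\Omega}a\,g'(\partial_{t}u)|\partial_{t}^{2}u|^{2}\,dx$. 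I would split $X=Y+Z$, where $Z=k_{1}\varphi'\int_{\Omega}(\partial_{t}^{2}u\,\partial_{t}v-\partial_{t}^{2}v\,\partial_{t}u)\,dx$ is the cross term and $Y$ collects the remaining three, and write $X(t+T)-X(t)=\int_{t}^{t+T}Y'(s)\,ds+[Z]_{s=t}^{s=t+T}$. By the fundamental theorem of calculus the contribution of $Z$ is \emph{exactly} the boundary quantity $k_{1}[\,\cdot\,]_{s=t}^{s=t+T}$ on the right of (\ref{Xt estimate}); this is the whole point of weighting that term by $\varphi'$, and it means $Z$ need never be differentiated, so no term involving $\partial_{t}^{3}u$ or $\partial_{t}^{2}u\,\partial_{t}^{2}v$ ever appears.

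The heart of the argument is $Y'(s)$. Differentiating the multiplier term $\varphi'\int_{\Omega}(u\partial_{t}u+v\partial_{t}v)\,dx$ gives $\varphi''\int_{\Omega}(u\partial_{t}u+v\partial_{t}v)\,dx+\varphi'\int_{\Omega}(|\partial_{t}u|^{2}+|\partial_{t}v|^{2})\,dx+\varphi'\int_{\Omega}(u\partial_{t}^{2}u+v\partial_{t}^{2}v)\,dx$. I would substitute the two equations of (\ref{system intro}), integrate by parts using the Dirichlet condition $u=v=0$ on $\Gamma$, and invoke the identity $\int_{\Omega}(|\nabla u|^{2}+|\nabla v|^{2})\,dx+2\int_{\Omega}b\,uv\,dx=2E_{u,v}-\int_{\Omega}(|\partial_{t}u|^{2}+|\partial_{t}v|^{2})\,dx$ coming from (\ref{energy intro}); this converts the $\int_{\Omega}(u\partial_{t}^{2}u+v\partial_{t}^{2}v)\,dx$ contribution into $-2\varphi'E_{u,v}+\varphi'\int_{\Omega}(|\partial_{t}u|^{2}+|\partial_{t}v|^{2})\,dx-\varphi'\int_{\Omega}a\,u\,g(\partial_{t}u)\,dx$. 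Adding the derivatives of $\varphi E_{u,v}$ and $k\varphi'E_{\partial_{t}u,\partial_{t}v}$ and using the two energy identities, the $E_{u,v}$ contributions collapse to $-\varphi'E_{u,v}$, and exactly the three sign-definite quantities $-\varphi'E_{u,v}$, $-\varphi\int_{\Omega}a\,g(\partial_{t}u)\partial_{t}u\,dx$ and $-k\varphi'\int_{\Omega}a\,g'(\partial_{t}u)|\partial_{t}^{2}u|^{2}\,dx$ emerge. Moving these across to the left yields the three coercive integrals on the left of (\ref{Xt estimate}).

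It then remains to bound the leftover part of $\int_{t}^{t+T}Y'$, namely the terms $\varphi''\int_{\Omega}(u\partial_{t}u+v\partial_{t}v)\,dx$, $2\varphi'\int_{\Omega}(|\partial_{t}u|^{2}+|\partial_{t}v|^{2})\,dx$, $-\varphi'\int_{\Omega}a\,u\,g(\partial_{t}u)\,dx$ and $k\varphi''E_{\partial_{t}u,\partial_{t}v}$. The second is already in the target form. For the third, since $\varphi$ is non-decreasing we have $\varphi'\ge 0$, so Young's inequality gives $-a\,u\,g(\partial_{t}u)\le a(|u|^{2}+|g(\partial_{t}u)|^{2})$ and hence the term $\int_{t}^{t+T}\varphi'\int_{\Omega}a(|g(\partial_{t}u)|^{2}+|u|^{2})\,dxds$. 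For the first and fourth I would use Cauchy--Schwarz, the Poincaré inequality, and the coercivity (\ref{b assumption intro}) to estimate $|\int_{\Omega}(u\partial_{t}u+v\partial_{t}v)\,dx|\le k_{0}E_{u,v}(s)$, then invoke monotonicity of both energies (both identities have non-positive right-hand sides because $g,g'\ge 0$ and $a\ge 0$) to replace $E_{u,v}(s)\le E_{u,v}(0)$ and $E_{\partial_{t}u,\partial_{t}v}(s)\le E_{\partial_{t}u,\partial_{t}v}(0)$, producing the factor $(kE_{\partial_{t}u,\partial_{t}v}(0)+k_{0}E_{u,v}(0))\int_{t}^{t+T}|\varphi''|\,ds$. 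Assembling these bounds with the boundary term from the first paragraph gives (\ref{Xt estimate}). The only genuinely delicate step is the algebraic reduction in the second paragraph: one must track the coupling integral $\int_{\Omega}b\,uv\,dx$, which is produced twice by the two equations and must be reabsorbed into the definition of $E_{u,v}$, and keep $\varphi'\ge 0$ throughout so that each inequality has the correct sign.
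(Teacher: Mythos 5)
Your proposal is correct and follows essentially the same route as the paper: differentiate the non-cross-term part of $X$, substitute the two equations and the identity $\int_{\Omega}(|\nabla u|^{2}+|\nabla v|^{2})\,dx+2\int_{\Omega}b\,uv\,dx=2E_{u,v}-\int_{\Omega}(|\partial_{t}u|^{2}+|\partial_{t}v|^{2})\,dx$, use the two energy identities to produce the three coercive terms, then control the remainders by Young, Poincar\'{e} with the coercivity of $E_{u,v}$, and monotonicity of both energies. Your splitting $X=Y+Z$ with the fundamental theorem of calculus applied to the cross term $Z$ is exactly the paper's device of keeping $k_{1}\frac{d}{dt}\bigl[\varphi'\int_{\Omega}(\partial_{t}^{2}u\,\partial_{t}v-\partial_{t}^{2}v\,\partial_{t}u)\,dx\bigr]$ as a total derivative so that it integrates to the boundary bracket, and you correctly read the weight in (\ref{Xt definition}) as $k_{1}\varphi'(t)$.
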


\begin{proof}
We differentiate $X\left( t\right) $ with respect to $t$, we obtain%
\begin{equation}
\begin{array}{l}
X^{\prime }\left( t\right) =\varphi ^{\prime }\left( t\right) \dint_{\Omega
}\left\vert \partial _{t}u\left( t,x\right) \right\vert ^{2}+\left\vert
\partial _{t}v\left( t,x\right) \right\vert ^{2}dx \\ 
+\varphi ^{\prime }\left( t\right) \dint_{\Omega }u\left( t,x\right)
\partial _{t}^{2}u\left( t,x\right) +v\left( t,x\right) \partial
_{t}^{2}v\left( t,x\right) dx \\ 
+\varphi ^{\prime \prime }\left( t\right) \dint_{\Omega }u\left( t,x\right)
\partial _{t}u\left( t,x\right) +v\left( t,x\right) \partial _{t}v\left(
t,x\right) dx \\ 
+k_{1}\frac{d}{dt}\left[ \varphi ^{\prime }\left( t\right) \dint_{\Omega
}\partial _{t}^{2}u\left( t,x\right) \partial _{t}v\left( t,x\right)
-\partial _{t}^{2}v\left( t,x\right) \partial _{t}u\left( t,x\right) dx%
\right] +\varphi ^{\prime }\left( t\right) E_{u,v}\left( t\right) \\ 
-\varphi \left( t\right) \dint_{\Omega }a\left( x\right) g\left( \partial
_{t}u\left( t,x\right) \right) \partial _{t}u\left( t,x\right) dx-k\varphi
^{\prime }\left( t\right) \dint_{\Omega }a\left( x\right) g^{\prime }\left(
\partial _{t}u\left( t,x\right) \right) \left\vert \partial _{t}^{2}u\left(
t,x\right) \right\vert ^{2}dx \\ 
+k\varphi ^{\prime \prime }\left( t\right) E_{\partial _{t}u,\partial
_{t}v}\left( t\right) .%
\end{array}
\label{Xt derivative}
\end{equation}%
Using the first and the second equations of $\left( \ref{system intro}%
\right) ,$ we infer that%
\begin{equation*}
\begin{array}{l}
\varphi ^{\prime }\left( t\right) \dint_{\Omega }\left( u\left( t,x\right)
\partial _{t}^{2}u\left( t,x\right) +v\left( t,x\right) \partial
_{t}^{2}v\left( t,x\right) \right) dx \\ 
=-\varphi ^{\prime }\left( t\right) \dint_{\Omega }\left\vert \nabla u\left(
t,x\right) \right\vert ^{2}+\left\vert \nabla v\left( t,x\right) \right\vert
^{2}+a\left( x\right) g\left( \partial _{t}u\left( t,x\right) \right)
u\left( t,x\right) +2b\left( x\right) u\left( t,x\right) v\left( t,x\right)
dx \\ 
=-2\varphi ^{\prime }\left( t\right) E_{u,v}\left( t\right) +\varphi
^{\prime }\left( t\right) \dint_{\Omega }\left\vert \partial _{t}u\left(
t,x\right) \right\vert ^{2}+\left\vert \partial _{t}v\left( t,x\right)
\right\vert ^{2}dx-\varphi ^{\prime }\left( t\right) \dint_{\Omega }a\left(
x\right) g\left( \partial _{t}u\left( t,x\right) \right) u\left( t,x\right)
dx.%
\end{array}%
\end{equation*}

Thanks to Young's inequality we get%
\begin{equation*}
\begin{array}{c}
\varphi ^{\prime }\left( t\right) \dint_{\Omega }a\left( x\right) g\left(
\partial _{t}u\left( t,x\right) \right) u\left( t,x\right) dx\leq \dfrac{%
\varphi ^{\prime }\left( t\right) }{2}\dint_{\Omega }a\left( x\right) \left(
\left\vert g\left( \partial _{t}u\left( t,x\right) \right) \right\vert
^{2}+\left\vert u\left( t,x\right) \right\vert ^{2}\right) dx.%
\end{array}%
\end{equation*}%
To estimate the third term of the RHS of $\left( \ref{Xt derivative}\right)
, $ we use Poincare's inequality and the fact that the energy is decreasing 
\begin{equation*}
\begin{array}{c}
\varphi ^{\prime \prime }\left( t\right) \dint_{\Omega }u\left( t,x\right)
\partial _{t}u\left( t,x\right) +v\left( t,x\right) \partial _{t}v\left(
t,x\right) dx\leq k_{0}\left\vert \varphi ^{\prime \prime }\left( s\right)
\right\vert E_{u,v}\left( 0\right)%
\end{array}%
\end{equation*}%
For the last term of the RHS of $\left( \ref{Xt derivative}\right) ,$ we use
the fact that%
\begin{equation*}
E_{\partial _{t}u,\partial _{t}v}\left( t\right) \leq E_{\partial
_{t}u,\partial _{t}v}\left( 0\right)
\end{equation*}%
Combining the estimates above, making some arrangement and integrating the
result between $t$ and $t+T$ , we obtain $\left( \ref{Xt estimate}\right) .$
\end{proof}

Let 
\begin{equation}
\begin{array}{c}
H\left( x\right) =\left\{ 
\begin{array}{ll}
h^{-1}\left( x\right) & \text{on }%
\mathbb{R}
_{+} \\ 
\infty & \text{on }%
\mathbb{R}
_{-}^{\ast }.%
\end{array}%
\right.%
\end{array}
\label{H definition}
\end{equation}

Noting that according to \cite{alabau1}, if $h^{-1}$ is a strictly convex $%
C^{1}$ function from $\left[ 0,r_{0}\right] $ to $%
\mathbb{R}
$ such that $h^{-1}\left( 0\right) =\left( h^{-1}\right) ^{\prime }\left(
0\right) =0$. Then the convex conjugate function of $H$ is defined by 
\begin{equation}
\begin{array}{c}
H^{\ast }\left( x\right) =x\left( \left( h^{-1}\right) ^{\prime }\right)
^{-1}\left( x\right) -h^{-1}\left( \left( \left( h^{-1}\right) ^{\prime
}\right) ^{-1}\left( x\right) \right) ,~\text{on }\left[ 0,\left(
h^{-1}\right) ^{\prime }\left( r_{0}\right) \right] .%
\end{array}
\label{convex conjugate}
\end{equation}

\begin{lemma}
\label{proposition phi} We assume that the assumption A2 holds. Let $\varphi 
$ be a solution of the following ODE%
\begin{equation}
\begin{array}{c}
\dfrac{d\varphi }{dt}-\frac{\epsilon _{0}}{2C_{1}}\varphi \left(
h^{-1}\right) ^{\prime }\left( 1/\varphi ^{\beta }\right) =0,~0<\varphi
^{-\beta }\left( 0\right) \leq r_{0}.%
\end{array}
\label{phi equation}
\end{equation}%
where $\epsilon _{0}$ and $C_{1}$ are positive constant. Then we have $%
\varphi $ is a concave strictly increasing function in $C^{3}\left( 
\mathbb{R}
_{+}\right) $. In addition we have

\begin{enumerate}
\item $\underset{t\rightarrow \infty }{\lim }\varphi \left( t\right) =\infty
.$

\item $\underset{t\rightarrow \infty }{\lim }\varphi ^{\prime }\left(
t\right) =\frac{\epsilon _{0}\alpha _{0}}{2C_{1}}$ and $\underset{%
t\rightarrow \infty }{\lim }\left\vert \dfrac{\varphi ^{\prime \prime
}\left( t\right) }{\varphi ^{\prime }\left( t\right) }\right\vert =0$.

\item The function \ $t\longmapsto \left\vert \frac{\varphi ^{\prime \prime
}\left( t\right) }{\varphi ^{\prime }\left( t\right) }\right\vert $ is
decreasing.

\item If $\varphi ^{\prime \prime }$ is not the null function, then there
exists $K>0$ such that $\underset{%
\mathbb{R}
_{+}}{\sup }\left\vert \frac{\varphi ^{\prime \prime \prime }\left( t\right) 
}{\varphi ^{\prime \prime }\left( t\right) }\right\vert \leq K.$

\item $\dint_{0}^{\infty }\left\vert \varphi ^{\prime \prime }\left(
s\right) \right\vert ds=\varphi ^{\prime }\left( 0\right) -\frac{\epsilon
_{0}\alpha _{0}}{2C_{1}}.$

\item $\frac{\epsilon _{0}}{2C_{1}}\dint_{0}^{\infty }\varphi \left(
s\right) H^{\ast }\left( \frac{2C_{1}\varphi ^{\prime }\left( s\right) }{%
\epsilon _{0}\varphi \left( s\right) }\right) ds\leq \frac{1}{\beta -1}%
\varphi ^{1-\beta }\left( 0\right) .$
\end{enumerate}
\end{lemma}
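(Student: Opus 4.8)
The plan is to treat \eqref{phi equation} as a first-order relation and to push every derivative of $\varphi$ through $h^{-1}$ evaluated at $s:=\varphi^{-\beta}$, so that the six claims collapse onto the pointwise inequalities built into A2. Writing $c=\frac{\epsilon_0}{2C_1}$, the ODE reads $\varphi'=c\,\varphi\,(h^{-1})'(s)$ with $s=\varphi^{-\beta}$. The initial condition forces $\varphi(0)\ge r_0^{-1/\beta}\ge 1$, and since $\varphi'>0$ the function $\varphi$ increases, so $s$ decreases and remains in $(0,r_0]$, keeping all arguments inside the domain where \eqref{assumption1}--\eqref{assumption3} are available. Differentiating the ODE once gives
\[
\frac{\varphi''}{\varphi'}=c\bigl[(h^{-1})'(s)-\beta s\,(h^{-1})''(s)\bigr],
\]
which is $\le 0$ by the second line of \eqref{assumption2}; hence $\varphi$ is concave, and since $h^{-1}\in C^3$ the ODE bootstraps $\varphi$ to $C^3$. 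An increasing function has a limit in $(0,\infty]$, and a finite limit would keep $\varphi'$ bounded below by a positive constant, contradicting convergence; this proves (1).

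For (2) I would put $s=\varphi^{-\beta}\to0$ and invoke the first line of \eqref{assumption2}: writing $\varphi'=c\,\varphi\,(h^{-1})'(\varphi^{-\beta})$ and applying $\lim_{\sigma\to\infty}\sigma(h^{-1})'(1/\sigma^{\beta})=\alpha_0$ with $\sigma=\varphi$ yields $\varphi'\to c\alpha_0=\frac{\epsilon_0\alpha_0}{2C_1}$, while the vanishing limits $(h^{-1})'(s)\to0$ and $s(h^{-1})''(s)\to0$ from \eqref{assumption1} send $\varphi''/\varphi'\to0$. For (3), note $|\varphi''/\varphi'|=c\,g(s)$ with $g(s):=\beta s(h^{-1})''(s)-(h^{-1})'(s)\ge0$; then $g'(s)=(\beta-1)(h^{-1})''(s)+\beta s(h^{-1})'''(s)$, which is $\ge0$ exactly by the third line of \eqref{assumption2} (divide by $\beta s>0$). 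As $s$ decreases in $t$ and $g$ is nondecreasing in $s$, the map $t\mapsto c\,g(s(t))$ is decreasing.

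The main obstacle is (4). Differentiating once more, using $s'=-c\beta s\,(h^{-1})'(s)$ (from $s'=-\beta s\,\varphi'/\varphi$ and $\varphi'/\varphi=c(h^{-1})'(s)$), I expect to obtain, on the set where $\varphi''\ne0$ --- equivalently $g(s)>0$, which is precisely the regime in which \eqref{assumption3} is postulated ---
\[
\frac{\varphi'''}{\varphi''}=-c\,g(s)-c\,\frac{(h^{-1})'(s)\bigl[(\beta^2-\beta)s(h^{-1})''(s)+\beta^2 s^2(h^{-1})'''(s)\bigr]}{\beta s(h^{-1})''(s)-(h^{-1})'(s)}.
\]
The fraction is exactly the expression bounded by $\alpha_1$ in \eqref{assumption3}, and it is nonnegative by the third line of \eqref{assumption2}, while $g(s)\le g(\varphi^{-\beta}(0))$ by monotonicity; hence $|\varphi'''/\varphi''|\le c\,g(\varphi^{-\beta}(0))+c\alpha_1=:K$. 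The delicate step is matching $\varphi'''/\varphi''$ term-by-term with the hand-tailored ratio in \eqref{assumption3}; the whole of A2 is evidently designed to make this identification exact.

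Finally, (5) is immediate from concavity: $\int_0^\infty|\varphi''|\,ds=-\int_0^\infty\varphi''\,ds=\varphi'(0)-\lim_{t\to\infty}\varphi'(t)=\varphi'(0)-\frac{\epsilon_0\alpha_0}{2C_1}$ by (2). For (6), the key observation is that the argument of $H^{\ast}$ equals $\frac{2C_1\varphi'}{\epsilon_0\varphi}=\frac1c\frac{\varphi'}{\varphi}=(h^{-1})'(\varphi^{-\beta})$; substituting $x=(h^{-1})'(s)$ into \eqref{convex conjugate} and using $\bigl((h^{-1})'\bigr)^{-1}\!\bigl((h^{-1})'(s)\bigr)=s$ reduces $H^{\ast}$ to $s\,(h^{-1})'(s)-h^{-1}(s)$ with $s=\varphi^{-\beta}$. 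The left side of (6) then becomes $c\int_0^\infty\bigl[\varphi^{1-\beta}(h^{-1})'(\varphi^{-\beta})-\varphi\,h^{-1}(\varphi^{-\beta})\bigr]dt$, and the ODE turns the first integrand into the exact differential $\varphi^{-\beta}\varphi'$, whose integral is $\frac{1}{\beta-1}\varphi^{1-\beta}(0)$ (here $\beta>1$ kills the boundary term at infinity); the remaining term $-c\int_0^\infty\varphi\,h^{-1}(\varphi^{-\beta})\,dt$ is nonpositive and is dropped, yielding the asserted bound.
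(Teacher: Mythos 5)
Your proposal is correct and follows essentially the same route as the paper: the same formulas for $\varphi''/\varphi'$ and $\varphi'''/\varphi''$ in terms of $s=\varphi^{-\beta}$, the same use of the three lines of A2 (including matching the fraction in \eqref{assumption3} exactly), and the same exact-differential computation $\frac{\epsilon_0}{2C_1}\varphi H^{\ast}\bigl(\tfrac{2C_1\varphi'}{\epsilon_0\varphi}\bigr)\leq \varphi^{-\beta}\varphi'$ for item (6). The only cosmetic deviations are proving item (1) by contradiction (finite limit would force $\varphi'$ bounded below) instead of the paper's explicit inverse-function integral, and bounding $g(s)\leq g(\varphi^{-\beta}(0))$ by monotonicity rather than by continuity up to $s=0$; both are equally valid.
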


\begin{proof}
Using the second part of $\left( \ref{assumption2}\right) $ and the fact that%
\begin{equation}
0<\varphi ^{-\beta }\left( t\right) \leq \varphi ^{-\beta }\left( 0\right)
\leq r_{0},\text{ for all }t>0,  \label{phi t beta bound}
\end{equation}%
we obtain%
\begin{equation}
\begin{array}{c}
\varphi ^{\prime \prime }\left( t\right) =\frac{\epsilon _{0}}{2C_{1}}%
\varphi ^{\prime }\left( t\right) \left( \left( h^{-1}\right) ^{\prime
}\left( \varphi ^{-\beta }\left( t\right) \right) -\beta \varphi ^{-\beta
}\left( t\right) \left( h^{-1}\right) ^{\prime \prime }\left( \varphi
^{-\beta }\left( t\right) \right) \right) \leq 0,%
\end{array}
\label{concave phi}
\end{equation}%
for all $t\in 
\mathbb{R}
_{+},$ which means that the function $\varphi $ is a concave on $%
\mathbb{R}
_{+}.$

It is easy to see that the function $\varphi \in C^{3}\left( 
\mathbb{R}
_{+}\right) $.

\begin{enumerate}
\item First we note that 
\begin{equation*}
\begin{array}{c}
\varphi ^{-1}\left( t\right) =\frac{2C_{1}}{\epsilon _{0}}\dint_{\varphi
\left( 0\right) }^{t}\dfrac{ds}{s\left( h^{-1}\right) ^{\prime }\left(
1/s^{\beta }\right) }.%
\end{array}%
\end{equation*}%
Therefore, using the fact that%
\begin{equation*}
\begin{array}{c}
s\left( h^{-1}\right) ^{\prime }\left( 1/s^{\beta }\right) \leq s\left(
h^{-1}\right) ^{\prime }\left( 1/\varphi ^{\beta }\left( 0\right) \right) ,%
\text{ for all }s\in \lbrack 0,\infty ),%
\end{array}%
\end{equation*}%
we deduce that%
\begin{equation*}
\begin{array}{c}
\underset{t\rightarrow \infty }{\lim }\varphi ^{-1}\left( t\right) =\infty ,%
\end{array}%
\end{equation*}%
thus 
\begin{equation}
\begin{array}{c}
\underset{t\rightarrow \infty }{\lim }\varphi \left( t\right) =\infty .%
\end{array}
\label{limit phi}
\end{equation}

\item Thanks to $\left( \ref{limit phi}\right) $ and $\left( \ref%
{assumption2}\right) $ we get 
\begin{equation}
\begin{array}{c}
\underset{t\rightarrow \infty }{\lim }\varphi ^{\prime }\left( t\right) =%
\frac{\epsilon _{0}\alpha _{0}}{2C_{1}}.%
\end{array}
\label{limit phi derivative}
\end{equation}

Direct computations and using $\left( \ref{assumption1}\right) ,$ yield%
\begin{equation*}
\begin{array}{c}
\underset{t\rightarrow \infty }{\lim }\left\vert \dfrac{\varphi ^{\prime
\prime }\left( t\right) }{\varphi ^{\prime }\left( t\right) }\right\vert =0.%
\end{array}%
\end{equation*}

\item From $\left( \ref{concave phi}\right) ,$ we deduce that%
\begin{equation*}
\begin{array}{c}
\left\vert \frac{\varphi ^{\prime \prime }\left( t\right) }{\varphi ^{\prime
}\left( t\right) }\right\vert =\frac{\epsilon _{0}}{2C_{1}}\left( -\left(
h^{-1}\right) ^{\prime }\left( \varphi ^{-\beta }\left( t\right) \right)
+\beta \varphi ^{-\beta }\left( t\right) \left( h^{-1}\right) ^{\prime
\prime }\left( \varphi ^{-\beta }\left( t\right) \right) \right) .%
\end{array}%
\end{equation*}%
We differentiate the estimate above and making some arrangements, we obtain%
\begin{equation*}
\begin{array}{c}
\frac{d}{dt}\left\vert \frac{\varphi ^{\prime \prime }\left( t\right) }{%
\varphi ^{\prime }\left( t\right) }\right\vert =-\frac{\epsilon _{0}\varphi
^{\prime }\left( t\right) }{2C_{1}\varphi ^{\beta +1}\left( t\right) }\left(
\left( \beta ^{2}-\beta \right) \left( h^{-1}\right) ^{\prime \prime }\left(
\varphi ^{-\beta }\left( t\right) \right) +\beta ^{2}\varphi ^{-\beta
}\left( t\right) \left( h^{-1}\right) ^{\prime \prime \prime }\left( \varphi
^{-\beta }\left( t\right) \right) \right) .%
\end{array}%
\end{equation*}%
\ From the estimate above, $\left( \ref{phi t beta bound}\right) $\ and $%
\left( \ref{assumption2}\right) ,$ we see that the function \ $t\longmapsto
\left\vert \frac{\varphi ^{\prime \prime }\left( t\right) }{\varphi ^{\prime
}\left( t\right) }\right\vert $ is decreasing.

\item We differentiate the identity $\left( \ref{concave phi}\right) $ and
making some arrangements, we obtain%
\begin{equation*}
\begin{array}{l}
\frac{\varphi ^{\prime \prime \prime }\left( t\right) }{\varphi ^{\prime
\prime }\left( t\right) }=\frac{\epsilon _{0}}{2C_{1}}\left( \left(
h^{-1}\right) ^{\prime }\left( \varphi ^{-\beta }\left( t\right) \right)
-\beta \varphi ^{-\beta }\left( t\right) \left( h^{-1}\right) ^{\prime
\prime }\left( \varphi ^{-\beta }\left( t\right) \right) \right) \\ 
+\frac{\epsilon _{0}\left( \varphi ^{\prime }\left( t\right) \right) ^{2}}{%
2C_{1}\varphi ^{\prime \prime }\left( t\right) \varphi \left( t\right) }%
\left( \left( \beta ^{2}-\beta \right) \varphi ^{-\beta }\left( t\right)
\left( h^{-1}\right) ^{\prime \prime }\left( \varphi ^{-\beta }\left(
t\right) \right) +\beta ^{2}\varphi ^{-2\beta }\left( t\right) \left(
h^{-1}\right) ^{\prime \prime \prime }\left( \varphi ^{-\beta }\left(
t\right) \right) \right) .%
\end{array}%
\end{equation*}%
On the other hand, from $\left( \ref{phi equation}\right) $ and $\left( \ref%
{concave phi}\right) ,$ we infer that%
\begin{equation*}
\begin{array}{c}
\left\vert \frac{\left( \varphi ^{\prime }\left( t\right) \right) ^{2}}{%
\varphi ^{\prime \prime }\left( t\right) \varphi \left( t\right) }%
\right\vert =\frac{\left( h^{-1}\right) ^{\prime }\left( \varphi ^{-\beta
}\left( t\right) \right) }{\beta \varphi ^{-\beta }\left( t\right) \left(
h^{-1}\right) ^{\prime \prime }\left( \varphi ^{-\beta }\left( t\right)
\right) -\left( h^{-1}\right) ^{\prime }\left( \varphi ^{-\beta }\left(
t\right) \right) }.%
\end{array}%
\end{equation*}%
Combining the two estimates above, we see that%
\begin{equation*}
\begin{array}{l}
\left\vert \frac{\varphi ^{\prime \prime \prime }\left( t\right) }{\varphi
^{\prime \prime }\left( t\right) }\right\vert \leq \frac{\epsilon _{0}}{%
2C_{1}}\left( \beta \varphi ^{-\beta }\left( t\right) \left( h^{-1}\right)
^{\prime \prime }\left( \varphi ^{-\beta }\left( t\right) \right) -\left(
h^{-1}\right) ^{\prime }\left( \varphi ^{-\beta }\left( t\right) \right)
\right) \\ 
+\frac{\epsilon _{0}\left( h^{-1}\right) ^{\prime }\left( \varphi ^{-\beta
}\left( t\right) \right) }{2C_{1}}\left( \frac{\left( \beta ^{2}-\beta
\right) \varphi ^{-\beta }\left( t\right) \left( h^{-1}\right) ^{\prime
\prime }\left( \varphi ^{-\beta }\left( t\right) \right) +\beta ^{2}\varphi
^{-2\beta }\left( t\right) \left( h^{-1}\right) ^{\prime \prime \prime
}\left( \varphi ^{-\beta }\left( t\right) \right) }{\beta \varphi ^{-\beta
}\left( t\right) \left( h^{-1}\right) ^{\prime \prime }\left( \varphi
^{-\beta }\left( t\right) \right) -\left( h^{-1}\right) ^{\prime }\left(
\varphi ^{-\beta }\left( t\right) \right) }\right) .%
\end{array}%
\end{equation*}%
So from Assumption A2, we conclude that there exists $K>0$ such that%
\begin{equation*}
\underset{%
\mathbb{R}
_{+}}{\sup }\left\vert \frac{\varphi ^{\prime \prime \prime }\left( t\right) 
}{\varphi ^{\prime \prime }\left( t\right) }\right\vert \leq K.
\end{equation*}

\item Using $\left( \ref{concave phi}\right) $ and $\left( \ref{limit phi
derivative}\right) ,$ we obtain%
\begin{equation*}
\begin{array}{c}
\dint_{0}^{\infty }\left\vert \varphi ^{\prime \prime }\left( s\right)
\right\vert ds=-\dint_{0}^{\infty }\varphi ^{\prime \prime }\left( s\right)
ds=\varphi ^{\prime }\left( 0\right) -\frac{\epsilon _{0}\alpha _{0}}{2C_{1}}%
.%
\end{array}%
\end{equation*}

\item Thanks to $\left( \ref{convex conjugate}\right) $ and $\left( \ref{phi
equation}\right) ,$ we see that%
\begin{equation*}
\begin{array}{c}
\varphi \left( s\right) H^{\ast }\left( \frac{2C_{1}\varphi ^{\prime }\left(
s\right) }{\epsilon _{0}\varphi \left( s\right) }\right) \leq \frac{2C_{1}}{%
\epsilon _{0}}\varphi ^{\prime }\left( s\right) \left( \left( h^{-1}\right)
^{\prime }\right) ^{-1}\left( \frac{2C_{1}\varphi ^{\prime }\left( s\right) 
}{\epsilon _{0}\varphi \left( s\right) }\right) =\frac{2C_{1}}{\epsilon _{0}}%
\dfrac{\varphi ^{\prime }\left( s\right) }{\varphi ^{\beta }\left( s\right) }%
,%
\end{array}%
\end{equation*}%
therefore integrating the estimate above between zero and infinity and using 
$\left( \ref{limit phi}\right) $\ and the fact that $\beta >1$ we obtain 
\begin{equation*}
\begin{array}{c}
\frac{\epsilon _{0}}{2C_{1}}\dint_{0}^{\infty }\varphi \left( s\right)
H^{\ast }\left( \frac{2C_{1}\varphi ^{\prime }\left( s\right) }{\epsilon
_{0}\varphi \left( s\right) }\right) ds\leq \frac{1}{\beta -1}\varphi
^{1-\beta }\left( 0\right) .%
\end{array}%
\end{equation*}
\end{enumerate}
\end{proof}

\subsection{Proof of Theorem 1}

We assume that $\omega $ and $\mathcal{O}$ satisfy the GCC and the
assumption A1 holds$.$ Let $\left( u,v\right) $ be a solution of the system $%
\left( \ref{system intro}\right) $ with initial data $U_{0}=\left(
u_{0},v_{0},u_{1},v_{1}\right) \in D\left( \mathcal{A}\right) .$ Let $T>\max
\left( T_{\omega },T_{\mathcal{O}},T_{0}\right) .$

We have $u$ is a solution of the nonhomogeneous wave equation with a
localized nonlinear damping and $\left( \omega ,T\right) $ satisfies the
GCC. In addition, taking into account of lemma $\left( \ref{proposition phi}%
\right) ,$ we see that $\varphi ^{\prime }$ satisfies the required
assumptions of proposition $\left( \ref{observability lemma}\right) .$
Therefore, using the observability estimate $\left( \ref{observability by a}%
\right) $ and $\left( \ref{Xt estimate}\right) ,$ we deduce that 
\begin{equation}
\begin{array}{l}
X\left( t+T\right) -X\left( t\right) +\dint_{t}^{t+T}\varphi ^{\prime
}\left( s\right) E_{u,v}\left( s\right) ds+\dint_{t}^{t+T}\dint_{\Omega
}a\left( x\right) \varphi \left( s\right) g\left( \partial _{t}u\left(
s,x\right) \right) \partial _{t}u\left( s,x\right) dxds \\ 
+k\dint_{t}^{t+T}\dint_{\Omega }a\left( x\right) \varphi ^{\prime }\left(
s\right) g^{\prime }\left( \partial _{t}u\left( s,x\right) \right)
\left\vert \partial _{t}^{2}u\left( s,x\right) \right\vert ^{2}dxds \\ 
\leq 2\dint_{t}^{t+T}\varphi ^{\prime }\left( s\right) \dint_{\Omega
}\left\vert \partial _{t}v\left( s,x\right) \right\vert ^{2}dxds \\ 
+\left( 4C_{T}+1\right) \left( \dint_{t}^{t+T}\varphi ^{\prime }\left(
s\right) \dint_{\Omega }\left( a\left( x\right) \left( \left\vert g\left(
\partial _{t}u\left( s,x\right) \right) \right\vert ^{2}+\left\vert \partial
_{t}u\left( s,x\right) \right\vert ^{2}\right) +\left\vert b\left( x\right)
v\left( s,x\right) \right\vert ^{2}\right) dxds\right) \\ 
+3\dint_{t}^{t+T}\varphi ^{\prime }\left( s\right) \dint_{\Omega }a\left(
x\right) \left\vert u\left( s,x\right) \right\vert ^{2}dxds+\left(
kE_{\partial _{t}u,\partial _{t}v}\left( 0\right) +k_{0}E_{u,v}\left(
0\right) \right) \dint_{t}^{t+T}\left\vert \varphi ^{\prime \prime }\left(
s\right) \right\vert ds \\ 
+k_{1}\left[ \varphi ^{\prime }\left( t\right) \dint_{\Omega }\partial
_{t}^{2}u\left( s,x\right) \partial _{t}v\left( s,x\right) -\partial
_{t}^{2}v\left( s,x\right) \partial _{t}u\left( s,x\right) dx\right]
_{s=t}^{s=t+T}.%
\end{array}
\label{proof theorem Xt estimate}
\end{equation}

To estimate $\dint_{t}^{t+T}\varphi ^{\prime }\left( s\right) \dint_{\Omega
}\left\vert \partial _{t}v\left( s,x\right) \right\vert ^{2}dxds,$ we first
use the fact that $v$ is a solution of the nonhomogeneous wave equation and $%
\left( \mathcal{O},T\right) $ satisfies the GCC, then from the observability
estimate $\left( \ref{observability by a}\right) ,$ we infer that 
\begin{equation}
\begin{array}{c}
2\dint_{t}^{t+T}\varphi ^{\prime }\left( s\right) \dint_{\Omega }\left\vert
\partial _{t}v\left( s,x\right) \right\vert ^{2}dxds\leq 4C_{T}\left(
\dint_{t}^{t+T}\varphi ^{\prime }\left( s\right) \dint_{\Omega }b\left(
x\right) \left\vert \partial _{t}v\left( s,x\right) \right\vert
^{2}+\left\vert b\left( x\right) u\left( s,x\right) \right\vert
^{2}dxds\right) .%
\end{array}
\label{proof theorem estimate dtv}
\end{equation}

Now we estimate $\dint_{t}^{t+T}\varphi ^{\prime }\left( s\right)
\dint_{\Omega }b\left( x\right) \left\vert \partial _{t}v\left( s,x\right)
\right\vert ^{2}dxds.$ We have 
\begin{equation}
\left\{ 
\begin{array}{ll}
\partial _{t}^{2}\left( \partial _{t}u\right) -\Delta \left( \partial
_{t}u\right) +b(x)\left( \partial _{t}v\right) +a(x)g^{\prime }\left(
\partial _{t}u\right) \partial _{t}^{2}u=0 & \text{in }\mathbb{R}_{+}^{\ast
}\times \Omega \\ 
\partial _{t}^{2}\left( \partial _{t}v\right) -\Delta \left( \partial
_{t}v\right) +b(x)\left( \partial _{t}u\right) =0 & \text{in }\mathbb{R}%
_{+}^{\ast }\times \Omega \\ 
\partial _{t}u=\partial _{t}v=0 & \text{on }\mathbb{R}_{+}^{\ast }\times
\Gamma .%
\end{array}%
\right.  \label{system derivative}
\end{equation}%
We multiply the first equation of $\left( \ref{system derivative}\right) $
by $\left( \varphi ^{\prime }\left( t\right) \partial _{t}v\right) $ and the
second equation by $\left( \varphi ^{\prime }\left( t\right) \partial
_{t}u\right) $ and integrating the difference of these results over $\Omega $%
, we obtain%
\begin{equation*}
\begin{array}{l}
\varphi ^{\prime }\left( t\right) \dint_{\Omega }b\left( x\right) \left\vert
\partial _{t}v\left( t,x\right) \right\vert ^{2}dx=-\frac{d}{dt}\left(
\varphi ^{\prime }\left( t\right) \dint_{\Omega }\left( \partial _{t}v\left(
t,x\right) \partial _{t}^{2}u\left( t,x\right) -\partial _{t}u\left(
t,x\right) \partial _{t}^{2}v\left( t,x\right) \right) dx\right) \\ 
-\varphi ^{\prime \prime }\left( t\right) \dint_{\Omega }\left( \partial
_{t}v\left( t,x\right) \partial _{t}^{2}u\left( t,x\right) -\partial
_{t}u\left( t,x\right) \partial _{t}^{2}v\left( t,x\right) \right) dx \\ 
+\varphi ^{\prime }\left( t\right) \dint_{\Omega }b\left( x\right)
\left\vert \partial _{t}u\left( t,x\right) \right\vert ^{2}dx-\varphi
^{\prime }\left( t\right) \dint_{\Omega }a\left( x\right) g^{\prime }\left(
\partial _{t}u\left( t,x\right) \right) \partial _{t}^{2}u\left( t,x\right)
\partial _{t}v\left( t,x\right) dx.%
\end{array}%
\end{equation*}%
Using Young's inequality, we infer that%
\begin{equation*}
\begin{array}{l}
\varphi ^{\prime }\left( t\right) \dint_{\Omega }b\left( x\right) \left\vert
\partial _{t}v\left( s,x\right) \right\vert ^{2}dx\leq \epsilon \varphi
^{\prime }\left( t\right) \dint_{\Omega }a\left( x\right) \left\vert
\partial _{t}v\left( s,x\right) \right\vert ^{2}dx \\ 
-\dfrac{d}{dt}\left( \varphi ^{\prime }\left( t\right) \dint_{\Omega }\left(
\partial _{t}u\left( t,x\right) \partial _{t}^{2}v\left( t,x\right)
-\partial _{t}v\left( t,x\right) \partial _{t}^{2}u\left( t,x\right) \right)
dx\right) \\ 
+\varphi ^{\prime \prime }\left( t\right) \dint_{\Omega }\left( \partial
_{t}u\left( t,x\right) \partial _{t}^{2}v\left( t,x\right) -\partial
_{t}v\left( t,x\right) \partial _{t}^{2}u\left( t,x\right) \right) dx \\ 
+\varphi ^{\prime }\left( t\right) \dint_{\Omega }b\left( x\right)
\left\vert \partial _{t}u\left( s,x\right) \right\vert ^{2}dx+\frac{1}{%
\epsilon }\varphi ^{\prime }\left( t\right) \dint_{\Omega }a\left( x\right)
\left\vert g^{\prime }\left( \partial _{t}u\left( t,x\right) \right)
\partial _{t}^{2}u\left( s,x\right) \right\vert ^{2}dx.%
\end{array}%
\end{equation*}%
Integrating the estimate above between $t$ and $t+T,$ we obtain%
\begin{equation*}
\begin{array}{l}
\dint_{t}^{t+T}\varphi ^{\prime }\left( s\right) \dint_{\Omega }b\left(
x\right) \left\vert \partial _{t}v\left( s,x\right) \right\vert ^{2}dxds \\ 
\leq \epsilon \dint_{t}^{t+T}\varphi ^{\prime }\left( s\right) \dint_{\Omega
}a\left( x\right) \left\vert \partial _{t}v\left( s,x\right) \right\vert
^{2}dxds+\dint_{t}^{t+T}\varphi ^{\prime }\left( s\right) \dint_{\Omega
}b\left( x\right) \left\vert \partial _{t}u\left( s,x\right) \right\vert
^{2}dxds \\ 
+\frac{1}{\epsilon }\dint_{t}^{t+T}\varphi ^{\prime }\left( s\right)
\dint_{\Omega }a\left( x\right) \left\vert g^{\prime }\left( \partial
_{t}u\left( s,x\right) \right) \partial _{t}^{2}u\left( s,x\right)
\right\vert ^{2}dxds \\ 
+\dint_{t}^{t+T}\varphi ^{\prime \prime }\left( s\right) \dint_{\Omega
}\left( \partial _{t}u\left( t,x\right) \partial _{t}^{2}v\left( t,x\right)
-\partial _{t}v\left( t,x\right) \partial _{t}^{2}u\left( t,x\right) \right)
dxds \\ 
-\left[ \varphi ^{\prime }\left( s\right) \dint_{\Omega }\left( \partial
_{t}^{2}u\left( s,x\right) \partial _{t}v\left( s,x\right) -\partial
_{t}^{2}v\left( s,x\right) \partial _{t}u\left( s,x\right) \right) dx\right]
_{s=t}^{s=t+T}.%
\end{array}%
\end{equation*}%
Now using the observability estimate $\left( \ref{proof theorem estimate dtv}%
\right) $ and taking $\epsilon =\frac{1}{4\left\Vert a\right\Vert _{\infty
}C_{T}},$ we get%
\begin{equation*}
\begin{array}{l}
\dint_{t}^{t+T}\varphi ^{\prime }\left( s\right) \dint_{\Omega }b\left(
x\right) \left\vert \partial _{t}v\left( s,x\right) \right\vert ^{2}dxds \\ 
\leq 2\dint_{t}^{t+T}\varphi ^{\prime }\left( s\right) \dint_{\Omega
}b\left( x\right) \left\vert \partial _{t}u\left( s,x\right) \right\vert
^{2}+\left\vert b\left( x\right) u\left( s,x\right) \right\vert ^{2}dxds \\ 
+16\left\Vert a\right\Vert _{\infty }C_{T}\dint_{t}^{t+T}\varphi ^{\prime
}\left( s\right) \dint_{\Omega }a\left( x\right) \left\vert g^{\prime
}\left( \partial _{t}u\left( s,x\right) \right) \partial _{t}^{2}u\left(
s,x\right) \right\vert ^{2}dxds \\ 
+2\dint_{t}^{t+T}\varphi ^{\prime \prime }\left( s\right) \dint_{\Omega
}\left( \partial _{t}u\left( t,x\right) \partial _{t}^{2}v\left( t,x\right)
-\partial _{t}v\left( t,x\right) \partial _{t}^{2}u\left( t,x\right) \right)
dxds \\ 
-2\left[ \varphi ^{\prime }\left( s\right) \dint_{\Omega }\left( \partial
_{t}^{2}u\left( s,x\right) \partial _{t}v\left( s,x\right) -\partial
_{t}^{2}v\left( s,x\right) \partial _{t}u\left( s,x\right) \right) dx\right]
_{s=t}^{s=t+T}.%
\end{array}%
\end{equation*}%
Combining the estimate above and $\left( \ref{proof theorem estimate dtv}%
\right) ,$ we find that%
\begin{equation}
\begin{array}{l}
2\dint_{t}^{t+T}\varphi ^{\prime }\left( s\right) \dint_{\Omega }\left\vert
\partial _{t}v\left( s,x\right) \right\vert ^{2}dxds \\ 
\leq 8C_{T}\dint_{t}^{t+T}\varphi ^{\prime }\left( s\right) \dint_{\Omega
}b\left( x\right) \left\vert \partial _{t}u\left( s,x\right) \right\vert
^{2}+\left\vert b\left( x\right) u\left( s,x\right) \right\vert ^{2}dxds \\ 
+64\left\Vert a\right\Vert _{\infty }\left( C_{T}\right) ^{2}\left\Vert
g^{\prime }\right\Vert _{L^{\infty }}\int_{t}^{t+T}\varphi ^{\prime }\left(
s\right) \dint_{\Omega }a\left( x\right) g^{\prime }\left( \partial
_{t}u\left( s,x\right) \right) \left\vert \partial _{t}^{2}u\left(
s,x\right) \right\vert ^{2}dxds \\ 
+8C_{T}\left( \sum_{i=0}^{1}E_{\partial _{t}^{i}u,\partial _{t}^{i}v}\left(
0\right) \right) \dint_{t}^{t+T}\left\vert \varphi "\left( s\right)
\right\vert ds \\ 
-8C_{T}\left[ \varphi ^{\prime }\left( s\right) \dint_{\Omega }\left(
\partial _{t}^{2}u\left( s,x\right) \partial _{t}v\left( s,x\right)
-\partial _{t}^{2}v\left( s,x\right) \partial _{t}u\left( s,x\right) \right)
dx\right] _{s=t}^{s=t+T}.%
\end{array}
\label{proof theorem estimate dtv 1}
\end{equation}%
Now using the observability estimate $\left( \ref{observability by a}\right)
,$ we infer that%
\begin{equation}
\begin{array}{l}
2\dint_{t}^{t+T}\varphi ^{\prime }\left( s\right) \dint_{\Omega }\left\vert
\partial _{t}v\left( s,x\right) \right\vert ^{2}dxds \\ 
\leq 32C_{T}^{2}\left\Vert b\right\Vert _{\infty }\left(
\dint_{t}^{t+T}\varphi ^{\prime }\left( s\right) \dint_{\Omega }\left(
a\left( x\right) \left( \left\vert g\left( \partial _{t}u\left( s,x\right)
\right) \right\vert ^{2}+\left\vert \partial _{t}u\left( s,x\right)
\right\vert ^{2}\right) \right) dxds\right) \\ 
+\left( 8C_{T}\left\Vert b\right\Vert _{\infty }+32C_{T}^{2}\left\Vert
b\right\Vert _{\infty }^{2}\right) \dint_{t}^{t+T}\varphi ^{\prime }\left(
s\right) \dint_{\Omega }\left\vert u\left( s,x\right) \right\vert
^{2}+\left\vert v\left( s,x\right) \right\vert ^{2}dxds \\ 
+64\left\Vert a\right\Vert _{\infty }\left( C_{T}\right) ^{2}\left\Vert
g^{\prime }\right\Vert _{L^{\infty }}\int_{t}^{t+T}\varphi ^{\prime }\left(
s\right) \dint_{\Omega }a\left( x\right) g^{\prime }\left( \partial
_{t}u\left( s,x\right) \right) \left\vert \partial _{t}^{2}u\left(
s,x\right) \right\vert ^{2}dxds \\ 
+8C_{T}\left( \sum_{i=0}^{1}E_{\partial _{t}^{i}u,\partial _{t}^{i}v}\left(
0\right) \right) \dint_{t}^{t+T}\left\vert \varphi "\left( s\right)
\right\vert ds \\ 
-8C_{T}\left[ \varphi ^{\prime }\left( s\right) \dint_{\Omega }\left(
\partial _{t}^{2}u\left( s,x\right) \partial _{t}v\left( s,x\right)
-\partial _{t}^{2}v\left( s,x\right) \partial _{t}u\left( s,x\right) \right)
dx\right] _{s=t}^{s=t+T}.%
\end{array}
\label{proof theorem estimate dtv final}
\end{equation}%
Utilizing $\left( \ref{proof theorem Xt estimate}\right) ,$ $\left( \ref%
{proof theorem estimate dtv final}\right) $ and making some arrangements, we
obtain%
\begin{equation*}
\begin{array}{l}
X\left( t+T\right) -X\left( t\right) +\dint_{t}^{t+T}\varphi ^{\prime
}\left( s\right) E_{u,v}\left( s\right) ds+\dint_{t}^{t+T}\varphi \left(
s\right) \dint_{\Omega }a\left( x\right) g\left( \partial _{t}u\left(
s,x\right) \right) \partial _{t}u\left( s,x\right) dxds \\ 
+k\dint_{t}^{t+T}\varphi ^{\prime }\left( s\right) \dint_{\Omega }a\left(
x\right) g^{\prime }\left( \partial _{t}u\left( s,x\right) \right)
\left\vert \partial _{t}^{2}u\left( s,x\right) \right\vert ^{2}dxds \\ 
\leq \left( 4C_{T}+32C_{T}^{2}\left\Vert b\right\Vert _{\infty }+1\right)
\left( \dint_{t}^{t+T}\varphi ^{\prime }\left( s\right) \dint_{\Omega
}\left( a\left( x\right) \left( \left\vert g\left( \partial _{t}u\left(
s,x\right) \right) \right\vert ^{2}+\left\vert \partial _{t}u\left(
s,x\right) \right\vert ^{2}\right) \right) dxds\right) \\ 
+\left( \left\Vert a\right\Vert _{\infty }+12C_{T}\left\Vert b\right\Vert
_{\infty }+32C_{T}^{2}\left\Vert b\right\Vert _{\infty }^{2}\right)
\dint_{t}^{t+T}\varphi ^{\prime }\left( s\right) \dint_{\Omega }\left\vert
v\left( s,x\right) \right\vert ^{2}+\left\vert u\left( s,x\right)
\right\vert ^{2}dxds \\ 
+64\left\Vert a\right\Vert _{\infty }\left( C_{T}\right) ^{2}\left\Vert
g^{\prime }\right\Vert _{L^{\infty }}\dint_{t}^{t+T}\varphi ^{\prime }\left(
s\right) \dint_{\Omega }a\left( x\right) g^{\prime }\left( \partial
_{t}u\left( s,x\right) \right) \left\vert \partial _{t}^{2}u\left(
s,x\right) \right\vert ^{2}dxds \\ 
+\left( k_{1}-8C_{T}\right) \left[ \varphi ^{\prime }\left( s\right)
\dint_{\Omega }\left( \partial _{t}^{2}u\left( s,x\right) \partial
_{t}v\left( s,x\right) -\partial _{t}^{2}v\left( s,x\right) \partial
_{t}u\left( s,x\right) \right) dx\right] _{s=t}^{s=t+T} \\ 
+\left( 8C_{T}+k\right) \left( \sum_{i=0}^{1}E_{\partial _{t}^{i}u,\partial
_{t}^{i}v}\left( 0\right) \right) \dint_{t}^{t+T}\left\vert \varphi ^{\prime
\prime }\left( s\right) \right\vert ds.%
\end{array}%
\end{equation*}

We take $k_{1}=8C_{T},$ we conclude that%
\begin{equation}
\begin{array}{l}
X\left( t+T\right) -X\left( t\right) +\dint_{t}^{t+T}\varphi ^{\prime
}\left( s\right) E_{u,v}\left( s\right) ds+\dint_{t}^{t+T}\varphi \left(
s\right) \dint_{\Omega }a\left( x\right) g\left( \partial _{t}u\left(
s,x\right) \right) \partial _{t}u\left( s,x\right) dxds \\ 
+\left( k-64\left\Vert a\right\Vert _{\infty }\left( C_{T}\right)
^{2}\left\Vert g^{\prime }\right\Vert _{L^{\infty }}\right)
\dint_{t}^{t+T}\varphi ^{\prime }\left( s\right) \dint_{\Omega }a\left(
x\right) g^{\prime }\left( \partial _{t}u\left( s,x\right) \right)
\left\vert \partial _{t}^{2}u\left( s,x\right) \right\vert ^{2}dxds \\ 
\leq \left( \left\Vert a\right\Vert _{\infty }+12C_{T}\left\Vert
b\right\Vert _{\infty }+32C_{T}^{2}\left\Vert b\right\Vert _{\infty
}^{2}\right) \dint_{t}^{t+T}\varphi ^{\prime }\left( s\right) \dint_{\Omega
}\left\vert v\left( s,x\right) \right\vert ^{2}+\left\vert u\left(
s,x\right) \right\vert ^{2}dxds \\ 
+\left( 32\left( C_{T}\right) ^{2}\left\Vert b\right\Vert _{\infty
}+4C_{T}+1\right) \left( \dint_{t}^{t+T}\varphi ^{\prime }\left( s\right)
\dint_{\Omega }a\left( x\right) \left( \left\vert \partial _{t}u\left(
s,x\right) \right\vert ^{2}+\left\vert g\left( \partial _{t}u\left(
s,x\right) \right) \right\vert ^{2}\right) dxds\right) \\ 
+\left( 8C_{T}+k+k_{0}\right) \left( \sum_{i=0}^{1}E_{\partial
_{t}^{i}u,\partial _{t}^{i}v}\left( 0\right) \right)
\dint_{t}^{t+T}\left\vert \varphi ^{\prime \prime }\left( s\right)
\right\vert ds.%
\end{array}
\label{proof theorem Xt prefinal estimate}
\end{equation}

Now using $\left( \ref{compactness lemma estimate}\right) $ with $\alpha =%
\frac{1}{2\left( \left\Vert a\right\Vert _{\infty }+12C_{T}\left\Vert
b\right\Vert _{\infty }+32C_{T}^{2}\left\Vert b\right\Vert _{\infty
}^{2}\right) }$ , we get%
\begin{equation}
\begin{array}{l}
X\left( t+T\right) +\frac{1}{2}\dint_{t}^{t+T}\varphi ^{\prime }\left(
s\right) E_{u,v}\left( s\right) ds+\dint_{t}^{t+T}\varphi \left( s\right)
\dint_{\Omega }a\left( x\right) g\left( \partial _{t}u\left( s,x\right)
\right) \partial _{t}u\left( s,x\right) dxds \\ 
+\left( k-64\left\Vert a\right\Vert _{\infty }\left( C_{T}\right)
^{2}\left\Vert g^{\prime }\right\Vert _{L^{\infty }}\right)
\dint_{t}^{t+T}\varphi ^{\prime }\left( s\right) \dint_{\Omega }a\left(
x\right) g^{\prime }\left( \partial _{t}u\left( s,x\right) \right)
\left\vert \partial _{t}^{2}u\left( s,x\right) \right\vert ^{2}dxds \\ 
\leq C_{1}\left( \dint_{t}^{t+T}\varphi ^{\prime }\left( s\right)
\dint_{\Omega }a\left( x\right) \left( \left\vert \partial _{t}u\left(
s,x\right) \right\vert ^{2}+\left\vert g\left( \partial _{t}u\left(
s,x\right) \right) \right\vert ^{2}\right) dxds\right) \\ 
+X\left( t\right) +\left( 8C_{T}+k+k_{0}\right) \left(
\sum_{i=0}^{1}E_{\partial _{t}^{i}u,\partial _{t}^{i}v}\left( 0\right)
\right) \dint_{t}^{t+T}\left\vert \varphi ^{\prime \prime }\left( s\right)
\right\vert ds.%
\end{array}
\label{proof theorem Xt final estimate1}
\end{equation}

where%
\begin{equation}
C_{1}=C\left( T,\left\Vert b\right\Vert _{\infty },\left\Vert a\right\Vert
_{\infty }\right) .  \label{C1 definition}
\end{equation}

Now we have to estimate the first term of RHS of the estimate above by the
third term of the LHS. We set for all fixed $s\geq 0,~\Omega ^{s}=\left\{
x,\left\vert \partial _{t}u\left( s,x\right) \right\vert <1\right\} .$
Thanks to $\left( \ref{h0 defin}\right) ,$ we have 
\begin{equation*}
\begin{array}{c}
\dint_{t}^{t+T}\varphi ^{\prime }\left( s\right) \dint_{\Omega ^{s}}a\left(
x\right) \left( \left\vert \partial _{t}u\left( s,x\right) \right\vert
^{2}+\left\vert g\left( \partial _{t}u\left( s,x\right) \right) \right\vert
^{2}\right) dxds \\ 
\leq \frac{1}{\epsilon _{0}}\dint_{t}^{t+T}\varphi ^{\prime }\left( s\right)
\dint_{\Omega ^{s}}h_{0}\left( g\left( \partial _{t}u\left( s,x\right)
\right) \partial _{t}u\left( s,x\right) \right) a\left( x\right) dxds \\ 
\leq \frac{1}{\epsilon _{0}}\dint_{t}^{t+T}\varphi ^{\prime }\left( s\right)
\dint_{\Omega }h_{0}\left( g\left( \partial _{t}u\left( s,x\right) \right)
\partial _{t}u\left( s,x\right) \right) a\left( x\right) dxds%
\end{array}%
\end{equation*}%
Since $h_{0}$ is concave, we can use (the reverse) Jensen's inequality and
we obtain 
\begin{equation*}
\begin{array}{c}
\dint_{t}^{t+T}\varphi ^{\prime }\left( s\right) \dint_{\Omega ^{s}}a\left(
x\right) \left( \left\vert \partial _{t}u\left( s,x\right) \right\vert
^{2}+\left\vert g\left( \partial _{t}u\left( s,x\right) \right) \right\vert
^{2}\right) dxds \\ 
\leq \frac{1}{\epsilon _{0}}\dint_{t}^{t+T}\varphi ^{\prime }\left( s\right)
h\left( \dint_{\Omega }\left( g\left( \partial _{t}u\left( s,x\right)
\right) \partial _{t}u\left( s,x\right) \right) a\left( x\right) dx\right)
ds.%
\end{array}%
\end{equation*}%
Under our assumptions the function $H$ \ defined by $\left( \ref{H
definition}\right) $\ is convex and proper. Hence, we can apply Young's
inequality \cite{rockfellar}%
\begin{equation*}
\begin{array}{l}
\dint_{t}^{t+T}\varphi ^{\prime }\left( s\right) \dint_{\Omega ^{s}}a\left(
x\right) \left( \left\vert \partial _{t}u\left( s,x\right) \right\vert
^{2}+\left\vert g\left( \partial _{t}u\left( s,x\right) \right) \right\vert
^{2}\right) dxds \\ 
\leq \left( \frac{\epsilon _{0}}{2C_{1}}\dint_{t}^{t+T}\varphi \left(
s\right) H^{\ast }\left( \frac{2C_{1}\varphi ^{\prime }\left( s\right) }{%
\epsilon _{0}\varphi \left( s\right) }\right) ds+\frac{\epsilon _{0}}{2C_{1}}%
\dint_{t}^{t+T}\varphi \left( s\right) \dint_{\Omega }a\left( x\right)
g\left( \partial _{t}u\left( s,x\right) \right) \partial _{t}u\left(
s,x\right) dxds\right) ,%
\end{array}%
\end{equation*}%
where $H^{\ast }$ is the convex conjugate of the function $H.$

On the other hand, using the fact that the function $g$ is linearly bounded
near infinity, we infer that%
\begin{equation*}
\begin{array}{l}
\dint_{t}^{t+T}\varphi ^{\prime }\left( s\right) \dint_{\Omega }a\left(
x\right) \left( \left\vert \partial _{t}u\left( s,x\right) \right\vert
^{2}+\left\vert g\left( \partial _{t}u\left( s,x\right) \right) \right\vert
^{2}\right) dxds \\ 
\leq \frac{1}{2C_{1}}\dint_{t}^{t+T}\varphi \left( s\right) H^{\ast }\left( 
\frac{2C_{1}\varphi ^{\prime }\left( s\right) }{\epsilon _{0}\varphi \left(
s\right) }\right) ds+\frac{1}{2C_{1}}\dint_{t}^{t+T}\varphi \left( s\right)
\dint_{\Omega }a\left( x\right) g\left( \partial _{t}u\left( s,x\right)
\right) \partial _{t}u\left( s,x\right) dxds \\ 
+\left( \frac{1}{m}+M^{2}\right) \dint_{t}^{t+T}\varphi ^{\prime }\left(
s\right) \dint_{\Omega }a\left( x\right) g\left( \partial _{t}u\left(
s,x\right) \right) \partial _{t}u\left( s,x\right) dxds.%
\end{array}%
\end{equation*}%
The estimate above combined with $\left( \ref{proof theorem Xt final
estimate1}\right) $, give%
\begin{equation}
\begin{array}{l}
X\left( t+T\right) +\frac{1}{2}\dint_{t}^{t+T}\varphi ^{\prime }\left(
s\right) E_{u,v}\left( s\right) ds \\ 
+\dint_{t}^{t+T}\left( \frac{1}{2}-\left( \frac{1}{m}+M^{2}\right) C_{1}%
\frac{\varphi ^{\prime }\left( s\right) }{\varphi \left( s\right) }\right)
\varphi \left( s\right) \dint_{\Omega }a\left( x\right) g\left( \partial
_{t}u\left( s,x\right) \right) \partial _{t}u\left( s,x\right) dxds \\ 
+\left( k-64\left\Vert a\right\Vert _{\infty }\left( C_{T}\right)
^{2}\left\Vert g^{\prime }\right\Vert _{L^{\infty }}\right)
\dint_{t}^{t+T}\varphi ^{\prime }\left( s\right) \dint_{\Omega }a\left(
x\right) g^{\prime }\left( \partial _{t}u\left( s,x\right) \right)
\left\vert \partial _{t}^{2}u\left( s,x\right) \right\vert ^{2}dxds \\ 
\leq X\left( t\right) +\left( 8C_{T}+k+k_{0}\right) \left(
\sum_{i=0}^{1}E_{\partial _{t}^{i}u,\partial _{t}^{i}v}\left( 0\right)
\right) \dint_{t}^{t+T}\left\vert \varphi ^{\prime \prime }\left( s\right)
\right\vert ds+\frac{1}{2}\dint_{t}^{t+T}\varphi \left( s\right) H^{\ast
}\left( \frac{2C_{1}\varphi ^{\prime }\left( s\right) }{\epsilon _{0}\varphi
\left( s\right) }\right) ds.%
\end{array}
\label{proof theorem Xt final estimate}
\end{equation}%
We remind that%
\begin{equation*}
\begin{array}{l}
X\left( t\right) =\varphi ^{\prime }\left( t\right) \dint_{\Omega }u\left(
t,x\right) \partial _{t}u\left( t,x\right) +v\left( t,x\right) \partial
_{t}v\left( t,x\right) dx \\ 
+k_{1}\varphi ^{\prime }\left( t\right) \dint_{\Omega }\partial
_{t}^{2}u\left( t,x\right) \partial _{t}v\left( t,x\right) -\partial
_{t}^{2}v\left( t,x\right) \partial _{t}u\left( t,x\right) dx+\varphi \left(
t\right) E_{u,v}\left( t\right) +k\varphi ^{\prime }\left( t\right)
E_{\partial _{t}u,\partial _{t}v}\left( t\right) ,%
\end{array}%
\end{equation*}%
with $k_{1}=8C_{T}.$ Using Young's inequality and Poincar\'{e} inequality,
it is easy to see that%
\begin{equation}
\begin{array}{l}
X\left( t\right) \leq \left( \varphi \left( t\right) +\frac{\varphi ^{\prime
}\left( t\right) }{\delta }\left( k_{1}+2\lambda ^{2}+1\right) \right)
E_{u,v}\left( t\right) +\varphi ^{\prime }\left( t\right) \left( k+\frac{%
k_{1}}{\delta }\right) E_{\partial _{t}u,\partial _{t}v}\left( t\right) \\ 
X\left( t\right) \geq \left( 1-\frac{\varphi ^{\prime }\left( t\right) }{%
\delta \varphi \left( t\right) }\left( k_{1}+2\lambda ^{2}+1\right) \right)
\varphi \left( t\right) E_{u,v}\left( t\right) +\varphi ^{\prime }\left(
t\right) \left( k-\frac{k_{1}}{\delta }\right) E_{\partial _{t}u,\partial
_{t}v}\left( t\right)%
\end{array}
\label{k estimate}
\end{equation}%
So, taking $k$ such that 
\begin{equation*}
k\geq \max \left( \frac{8C_{T}}{\delta },64\left\Vert a\right\Vert _{\infty
}\left( C_{T}\right) ^{2}\left\Vert g^{\prime }\right\Vert _{L^{\infty
}}\right) ,
\end{equation*}
using $\left( \ref{proof theorem Xt final estimate}\right) $\ and the fact
that%
\begin{equation*}
\begin{array}{c}
\left( \frac{1}{2}-C_{1}\left( \frac{1}{m}+M^{2}\right) \frac{\varphi
^{\prime }\left( t\right) }{\varphi \left( t\right) }\right) \geq 0\text{
and }1-\frac{\varphi ^{\prime }\left( t\right) }{\delta \varphi \left(
t\right) }\left( 8C_{T}+2\lambda ^{2}+1\right) \geq 0,\text{ for all }t\geq
0,%
\end{array}%
\end{equation*}%
we deduce that $X\left( t\right) \geq 0$ and 
\begin{equation}
\begin{array}{l}
X\left( t+T\right) +\frac{1}{2}\dint_{t}^{t+T}\varphi ^{\prime }\left(
s\right) E_{u,v}\left( s\right) ds \\ 
\leq X\left( t\right) +8C_{T}\left( \sum_{i=0}^{1}E_{\partial
_{t}^{i}u,\partial _{t}^{i}v}\left( 0\right) \right)
\dint_{t}^{t+T}\left\vert \varphi ^{\prime \prime }\left( s\right)
\right\vert ds+\frac{1}{2}\dint_{t}^{t+T}\varphi \left( s\right) H^{\ast
}\left( \frac{2C_{1}\varphi ^{\prime }\left( s\right) }{\epsilon _{0}\varphi
\left( s\right) }\right) ds.%
\end{array}
\label{Xt final estimate}
\end{equation}%
for all $t\geq 0$. Thus%
\begin{equation*}
\begin{array}{l}
\overset{n-1}{\underset{i=0}{\sum }}\left( X\left( \left( i+1\right)
T\right) -X\left( iT\right) +\frac{1}{2}\dint_{iT}^{\left( i+1\right)
T}\varphi ^{\prime }\left( s\right) E_{u,v}\left( s\right) ds\right) \\ 
\leq \left( 8C_{T}+k+k_{0}\right) \left( \sum_{i=0}^{1}E_{\partial
_{t}^{i}u,\partial _{t}^{i}v}\left( 0\right) \right)
\dint_{0}^{nT}\left\vert \varphi ^{\prime \prime }\left( s\right)
\right\vert ds+\frac{1}{2}\dint_{0}^{nT}\varphi \left( s\right) H^{\ast
}\left( \frac{2C_{1}\varphi ^{\prime }\left( s\right) }{\epsilon _{0}\varphi
\left( s\right) }\right) ds,%
\end{array}%
\end{equation*}%
and this gives%
\begin{equation*}
\begin{array}{l}
X\left( nT\right) +\frac{1}{2}\dint_{0}^{nT}\varphi ^{\prime }\left(
s\right) E_{u,v}\left( s\right) ds \\ 
\leq X\left( 0\right) +\left( 8C_{T}+k+k_{0}\right) \left(
\sum_{i=0}^{1}E_{\partial _{t}^{i}u,\partial _{t}^{i}v}\left( 0\right)
\right) \dint_{0}^{nT}\left\vert \varphi "\left( s\right) \right\vert ds \\ 
+\frac{1}{2}\dint_{0}^{nT}\varphi \left( s\right) H^{\ast }\left( \frac{%
2C_{1}\varphi ^{\prime }\left( s\right) }{\epsilon _{0}\varphi \left(
s\right) }\right) ds,\text{ for all }n\in 
\mathbb{N}
.%
\end{array}%
\end{equation*}%
Utilizing lemma \ref{proposition phi}, we conclude that there exists a
positive constant $C$ such that 
\begin{equation}
\begin{array}{c}
\dint_{0}^{+\infty }\varphi ^{\prime }\left( s\right) E_{u,v}\left( s\right)
ds\leq C\left( 1+\sum_{i=0}^{1}E_{\partial _{t}^{i}u,\partial
_{t}^{i}v}\left( 0\right) \right) .%
\end{array}
\label{integral bound}
\end{equation}%
Since%
\begin{equation*}
\varphi \left( t\right) E_{u,v}\left( t\right) \leq \varphi \left( 0\right)
E_{u,v}\left( 0\right) +\int_{0}^{+\infty }\varphi ^{\prime }\left( s\right)
E_{u,v}\left( s\right) ds,~t\geq 0,
\end{equation*}%
then $\left( \ref{integral bound}\right) ,$ gives $\left( \ref{energy decay
rate}\right) .$

Finally, using the density of $D\left( \mathcal{A}\right) $ in $\mathcal{H}$%
, we obtain $\left( \ref{limit energy space}\right) .$


\begin{thebibliography}{99}
\bibitem{alab-lea control} Alabau-Boussouira, Fatiha; L\'{e}autaud,
Matthieu, Indirect controllability of locally coupled wave-type systems and
applications. J. Math. Pures Appl. (9) 99 (2013), no. 5, 544--576.

\bibitem{alab-lea stabilization} F. Alabau-Boussouira, M. L\'{e}autaud,
Indirect stabilization of locally coupled wave-type systems, ESAIM Control
Optim. Calc. Var. 18 (2012) 548--582.

\bibitem{alab-canar-komo} Alabau, Fatiha; Cannarsa, Piermarco; Komornik,
Vilmos, Indirect internal stabilization of weakly coupled evolution
equations. J. Evol. Equ. 2 (2002), no. 2, 127--150.

\bibitem{alabau1} Alabau-Boussouira, Fatiha, A unified approach via
convexity for optimal energy decay rates of finite and infinite dimensional
vibrating damped systems with applications to semi-discretized vibrating
damped systems. J. Differ. Equations 248, No. 6, 1473-1517 (2010).

\bibitem{Ala nonl} Alabau-Boussouira, Fatiha, Wang Zhiqiang and Yu Lixin, A
one-step optimal energy decay formula for indirectly nonlinearly damped
hyperbolic systems coupled by velocities. ESAIM: COCV 23 (2017) 721-749.

\bibitem{ala lower 1} F. Alabau-Boussouira, New trends towards lower energy
estimates and optimality for nonlinearly damped vibrating systems. J.
Differential Equations, 249: 1145-1178, 2010.

\bibitem{Aloui-daou} Aloui, L.; Daoulatli, M., Stabilization of two coupled
wave equations on a compact manifold with boundary. J. Math. Anal. Appl.
436, No. 2, 944-969 (2016).

\bibitem{BLR} Bardos, C., Lebeau, G., Rauch, J.: Sharp sufficient conditions
for the observation, control, and stabilization of waves from the boundary.
SIAM J. Control Optim. 30, 1024--1065 (1992).

\bibitem{bociu-lasieck} L. Bociu and I. Lasiecka, Uniqueness of weak
solutions for the semilinear wave equations with supercritical
boundary/interior sources and damping, Discrete Contin. Dyn. Syst., 22
(2008), 835-860.

\bibitem{daou jmaa} Daoulatli, M. Behaviors of the energy of solutions of
the wave equation with damping and external force. J. Math. Anal. Appl. 389
(2012), no. 1, 205--225.

\bibitem{daou ext} Daoulatli, M. Energy decay rates for solutions of the
wave equation with linear damping in exterior domain. Evol. Equ. Control
Theory 5 (2016), no. 1, 37--59.

\bibitem{deh-ler-lea} Dehman, Belhassen; Le Rousseau, J\'{e}r\^{o}me; L\'{e}%
autaud, Matthieu, Controllability of two coupled wave equations on a compact
manifold. Arch. Ration. Mech. Anal. 211 (2014), no. 1, 113--187.

\bibitem{X fu} X. Fu, Sharp decay rates for the weakly coupled hyperbolic
system with one internal damping, SIAM J. CONTROL OPTIM., 50 (2012),
1643-1660

\bibitem{las-tat} I. Lasiecka and D. Tataru, Uniform boundary stabilization
of semi-linear wave equation with nonlinear boundary dissipation,
Differential Integral Equations \textbf{6} (1993), 507--533.

\bibitem{leau} L\'{e}autaud, M. Spectral inequalities for non-selfadjoint
elliptic operators and application to the null-controllability of parabolic
systems. J. Funct. Anal. 258 (2010), no. 8, 2739--2778.

\bibitem{lios} J.-L. Lions. Controlabilite exacte, perturbations et
stabilisation de systemes distribues. Tome 1, volume 8 of Recherches en
Mathematiques Appliquees. Masson, Paris, 1988.

\bibitem{nakao} M. Nakao, Energy decay for the linear and semilinear wave
equations in exterior domains with some localized dissipations, Math. Z. 238
(2001) 781--797.

\bibitem{RT} Rauch, J., Taylor, M.: Exponential decay of solutions to
hyperbolic equations in bounded domains. Indiana Univ. Math. J. 24, 79--86
(1974).

\bibitem{rockfellar} R. T. Rockafellar, Convex Analysis Princeton University
Press, Princeton, NJ, 1970.
\end{thebibliography}
\end{document}